\documentclass[11pt,a4paper]{amsart}
\usepackage{amsfonts}

\usepackage{}
\usepackage[mathscr]{eucal}
\usepackage{amssymb}
\usepackage{amsbsy}

\usepackage{CJK,CJKnumb}
\usepackage[CJKbookmarks,colorlinks,
            linkcolor=black,
            anchorcolor=black,
            citecolor=blue]{hyperref}
\usepackage{color,soul}              
\usepackage{indentfirst}        
\usepackage{latexsym,bm}        
\usepackage{graphics,graphicx}
\usepackage{cases}
\usepackage{pifont}
\usepackage{txfonts}
\usepackage{xcolor}
\usepackage[all,knot,poly]{xy}
\usepackage[usenames,dvipsnames]{pstricks}
\usepackage{epsfig}
\usepackage{pst-grad} 
\usepackage{pst-plot} 

\setlength{\textwidth}{14cm} \setlength{\textheight}{20cm}
\setlength{\hoffset}{0cm} \setlength{\voffset}{0cm}
\setlength{\parindent}{2em}                 
\setlength{\parskip}{2.5pt plus1pt minus1pt}  

\setlength{\abovedisplayskip}{2pt plus1pt minus1pt}     
\setlength{\belowdisplayskip}{6pt plus1pt minus1pt}     
\setlength{\arraycolsep}{2pt}   

\allowdisplaybreaks[4]  


\numberwithin{equation}{section}

\newcommand{\al}{\alpha}
\newcommand{\be}{\beta}
\newcommand{\de}{\delta}
\newcommand{\De}{\Delta}

\newcommand{\emp}{\emptyset}
\newcommand{\ga}{\gamma}
\newcommand{\Ga}{\Gamma}
\newcommand{\la}{\lambda}

\newcommand{\La}{\Lambda}
\newcommand{\ot}{\otimes}

\newcommand{\Om}{\Omega}
\newcommand{\si}{\sigmaup}
\newcommand{\te}{\theta}
\newcommand{\Te}{\Theta}
\newcommand{\vt}{\vartheta}

\newcommand{\mB}{\mathcal{B}}
\newcommand{\mH}{\mathcal{H}}
\newcommand{\mK}{\mathcal{K}}
\newcommand{\mP}{\mathcal{P}}
\newcommand{\mS}{\mathcal{S}}
\newcommand{\mc}{\mathscr{C}}
\newcommand{\pp}{\mathscr{P}}
\newcommand{\sq}{\mathscr{S}}
\newcommand{\fc}{\mathfrak{C}}
\newcommand{\fp}{\mathfrak{P}}

\newcommand{\bN}{\mathbb{N}}

\newcommand{\bY}{\mathbb{Y}}
\newcommand{\bZ}{\mathbb{Z}}

\newcommand{\bk}{\mathbb{K}}
\newcommand{\mb}[1]{\mbox{#1}}

\newcommand{\ms}[1]{\mbox{\sffamily #1}}
\newcommand{\bs}[1]{{\scriptsize\mbox{#1}}}

\newcommand{\ti}[1]{{\tiny \mbox{#1}}}
\newcommand{\stt}[1]{{\scriptstyle #1}}

\newcommand{\ull}[1]{\underline{#1}}

\newcommand{\lan}{\langle}
\newcommand{\ran}{\rangle}
\newcommand{\lb}{\left(}
\newcommand{\rb}{\right)}
\newcommand{\rw}{\rightarrow}
\newcommand{\beq}{\begin{equation}}
\newcommand{\eeq}{\end{equation}}



\begin{document}

\newtheorem{theorem}{Theorem}[section]

\newtheorem{lem}[theorem]{Lemma}

\newtheorem{cor}[theorem]{Corollary}
\newtheorem{prop}[theorem]{Proposition}

\theoremstyle{remark}
\newtheorem{rem}[theorem]{Remark}

\newtheorem{defn}[theorem]{Definition}

\newtheorem{exam}[theorem]{Example}

\theoremstyle{conjecture}
\newtheorem{con}[theorem]{Conjecture}

\renewcommand\arraystretch{1.2}

\title[A noncommutative lift of Schur's Q-functions]{A lift of Schur's Q-functions to the peak algebra}

\author[Jing]{Naihuan Jing}
\address{Department of Mathematics, North Carolina State University, Raleigh, NC 27695-8205, USA}
\email{jing@math.ncsu.edu}

\author[Li]{Yunnan Li$^\ast$}
\address{School of Mathematical Sciences, South China University of Technology, Guangzhou 510640, China}
\email{scynli@scut.edu.cn}

\thanks{$\ast$ corresponding author.}

\subjclass[2010]{Primary 05E05, 16S99; Secondary 05E99, 16T99, 05A99}

\begin{abstract}
We construct a lift of Schur's Q-functions to the peak algebra of the symmetric group,
called the noncommutative Schur Q-functions, and extract from them a new natural basis with several
nice properties such as the positive right-Pieri rule, combinatorial expansion, etc. Dually, we get a basis of the Stembridge algebra of peak functions refining Schur's P-functions in a simple way.

\medskip
\noindent\textit{Keywords:} peak algebra, noncommutative Schur Q-functions, quasisymmetric Schur P-functions
\end{abstract}

\maketitle

\section{Introduction}

The algebra of noncommutative symmetric functions (abbreviated as NSym) is the noncommutative lifting of that of symmetric functions (abbreviated as Sym) studied first in \cite{GKL}. It is proved in \cite{MR} that the graded Hopf dual of NSym is the quasisymmetric functions (abbreviated as QSym), and NSym is isomorphic  as Hopf algebras to the Solomon descent algebra of the symmetric group.
As an important nonsymmetric generalization of Sym, QSym was introduced by Gessel as a source of generating functions for $P$-partitions \cite{Ge}. Later Stembridge developed this theory further to define the peak quasisymmetric functions as the weight enumerators of all enriched $P$-partitions of chains, which refine the classical Schur's Q-functions \cite{Ste}. The Stembridge algebra $\mB$ of peak functions has been widely studied and found close
 relations to various topics in combinatorics, geometry and representation theory, including Eulerian enumeration \cite{BHW}, Schubert calculus \cite{BH,BMSW}, Kazhdan-Lusztig theory \cite{BC}, etc.

The interesting relations among these combinatorial Hopf algebras above show the specialty of
peak functions. The peak algebra $\mP$ of the symmetric group is naturally embedded into NSym, with the graded Hopf dual isomorphic to the Stembridge algebra $\mB$. Also, $\mP$ can serve as a Hopf quotient of NSym via the $(1-t)$-transform at $t=-1$ introduced in \cite{KLT}, so is the case for $\mB$ in QSym by duality. Moreover, it is shown in \cite{ABS} that $\mP$ is the terminal object in the category of combinatorial Hopf algebras satisfying the generalized Dehn-Sommerville relation (\ref{np}). The latter is also called the Euler relation
and was first derived in \cite{BB} from the flag $f$-vectors of a ranked Eulerian poset by considering its M\"{o}bius function.


The main result of this paper is to find a noncommutative lifting of Schur's Q-functions in the peak algebra. We call them the \textit{noncommutative Schur Q-functions} (abbreviated as NSQF). They are derived by a creation operator construction lifting the vertex operator defined by the first author in \cite{J1} to realize Schur's Q-functions. Under the forgetful map $\pi$ from NSym to Sym, their image is the raising operator expression of Schur's Q-functions. This method has been applied by Berg et al. in \cite{BBSSZ} to construct a noncommutative lift of Schur functions, called the immaculate basis, and also for modified Hall-Littlewood functions. However, we emphasize that the results in \cite{BBSSZ} can not be specialized at $t=-1$ to recover ours, just like the case for the vertex operators defined by the first author in \cite{J2} and those in \cite{J1}.

A novel 
point of our work is that we can extract a new and natural basis in the peak algebra $\mP$ indexed by the so-called \textit{peak compositions} from the NSQF, and that the new basis has a positive right-Pieri rule (\ref{pie'}). The peak composition set naturally contains all strict partitions, which parameterize Schur's Q-functions. Furthermore, in contrast with the anti-symmetric relations sastified by Schur's Q-functions, the NSQF's obey more subtle relations, which are still mysterious to us.

Dually, we find a new basis in the Stembridge algebra $\mB$, called
the \textit{quasisymmetric Schur Q-functions} (abbreviated as QSQF),
 since they also nicely refine Schur's Q-functions as the peak
 functions do (see (\ref{ref}), (\ref{ref'})). Our QSQF's are positively expanded in monomial quasisymmetric functions. Moreover, several simple examples convince us that they potentially have a positive, integral and
 unitriangular expansion in peak functions (Conjecture \ref{conj}), which in turn implies a positive expansion in fundamental quasisymmetric functions (Prop. \ref{pos}). It is also worthy of mentioning that other interesting bases for $\mB$ have been found. In an unpublished work \cite{Hsi}, Hsiao defined a monomial-like basis with its dual corresponding to a family of flag-enumeration functionals on Eulerian posets. Recently, another basis has been constructed in \cite{BC} based on a new characterization of $\mB$, and this result was applied to obtain a simple and explicit combinatorial formula for the Kazhdan-Lusztig polynomials of a Coxeter group $W$.

In \cite{BHT}, Bergeron et al. provided the peak algebra $\mP$ and its dual $\mB$ a representation theoretic interpretation as the Grothendieck ring of the tower of the Hecke-Clifford algebras at $q=0$. In particular,
the peak functions are realized as certain characters of simple supermodules. Hence, if Conjecture \ref{conj} holds, then our QSQF's may also have a nice character realization for some special modules. We note that similar work has been successively done for the dual immaculate basis due to Berg et al. in \cite{BBSSZ1}.

The organization of the paper is as follows. In $\S 2$ we provide some notation, definitions and mutual relations for all combinatorial Hopf algebras concerned, including NSym, QSym, Sym, $\mP$ and $\mB$. In $\S 3$ we lift the vertex operator realization of Schur's Q-functions to the noncommutative level based on the $(1-t)$-transform $Q_r$'s at $t=-1$. Then we obtain the raising operator expression (\ref{ro}), a key relation (\ref{re}) and a positive right-Pieri rule (\ref{pie}) for the NSQF. In $\S 4$ we find a natural basis for the peak algebra from the NSQF. A reformulated right-Pieri rule (\ref{pie'}) and also a simple combinatorial expression of the $Q_\al$'s in terms of
the new basis are given. In the last section, we obtain the dual basis in QSQF for the Stembridge algebra.
For representation theoretic consideration, the positive expansion of QSQF's in terms of the peak functions is
studied and Conjecture \ref{conj} is offered for future work.
\section{Background}

\subsection{Notation and definitions}
Denote by $\bN$ (resp. $\bN_0$) the set of positive (resp. nonnegative) integers. Given any $m,n\in\bN,\,m\leq n$, let $[m,n]:=\{m,m+1,\dots,n\}$ and $[n]:=[1,n]$ for short. Let $\mc(n)$ be the set of compositions of $n$, consisting of ordered tuples of positive integers summed up to $n$.  We denote $\al\vDash n$ when $\al\in \mc(n)$. Let $\mc:=\bigcup\limits_{n\geq1}^.\mc(n)$. Given $\al=(\al_1,\dots,\al_r)\vDash n$, let $\ell(\al)=r$ be its length and define its associated \textit{descent set} as
\[D(\al)=\{\al_1,\al_1+\al_2,\dots,\al_1+\cdots+\al_{r-1}\}
\subseteq[n-1].\]
The refining order $\leq$ on $\mc(n)$ is defined by
\[\al\leq\be\mb{ if and only if }D(\be)\subseteq D(\al),\,\forall\al,\be\vDash n.\]
In general, for $\al\in\bN^r_0$, let $\ell(\al)=|\{i\,:\,\al_i>0\}|$.

We highlight the subset $\mc_o(n)$ of $\mc(n)$, consisting of compositions of $n$ with odd parts, and write $\al\vDash_{\bs{odd}} n$ when $\al\in\mc_o(n)$.
It is well-known that
\[|\mc_o(n)|=f_{n-1},\]
where $\{f_n\}_{n\geq0}$ is the Fibonacci sequence defined recursively by
\[f_0=f_1=1,\,f_n=f_{n-1}+f_{n-2},\,n\geq2.\]

Now fix an algebraically closed field $\bk$ of characteristic 0. Given the alphabet $A=\{a_1,a_2,\dots\}$, one has the free associative $\bk$-algebra $\mathcal {F}=\bk\lan\lan a_1,a_2,\dots\ran\ran$. Define the following functions
in the ring $\mathcal {F}[[z]]$ of $\mathcal F$-power series in the variable $z$:
\begin{equation}\label{ncom}
H(A,z)=\sum_{n\geq0}H_n(A)z^n=\prod\limits_{i\geq1}^{\longrightarrow}\dfrac{1}{1-a_iz}
\end{equation}
and
\begin{equation}\label{nele}
E(A,z)=\sum_{n\geq0}E_n(A)z^n=\prod\limits_{i\geq1}^{\longleftarrow}(1+a_iz).
\end{equation}
It is easy to see that
\[\sum_{i=0}^n(-1)^iE_i(A)H_{n-i}(A)=\de_{n,0}.\]
The set $\{H_n(A)\}_{n\in\mathbb{N}}$ (or $\{E_n(A)\}_{n\in\mathbb{N}}$) generates a subalgebra of $\mathcal {F}$, called the algebra of \textit{noncommutative symmetric functions} and denoted by NSym \cite[\S 4]{KLT}. The algebra
$\mb{NSym}=\oplus_{n=0}^{\infty} \mb{NSym}_n$ is a $\mathbb Z$-graded algebra
under the gradation given by $\mb{deg}(H_n)=n$, where $\mb{NSym}_n$ is
the subspace of homogeneous elements of degree $n$.
Let \[H_\al=H_{\al_1}\cdots H_{\al_r},E_\al=E_{\al_1}\cdots E_{\al_r},~\al=(\al_1,\dots,\al_r)\vDash n.\]
If we change base from $\bk$ to $\bZ$, then both $\{H_\al\}_{\al\vDash n}$ and $\{E_\al\}_{\al\vDash n}$ are $\bZ$-bases of $\mb{NSym}_n$, called the {\it noncommutative complete} and {\it elementary symmetric functions} respectively. There exists another important $\bZ$-basis $\{R_\al\}_{\al\vDash n}$ of $\mb{NSym}_n$, called the \textit{noncommutative ribbon Schur functions} and are defined by
\[R_\al=\sum_{\be\geq\al}(-1)^{l(\be)-l(\al)}H_\be.\]

%

Let
 \[Q(A,z)=\sum_{n\geq0}Q_n(A)z^n=E(A,z)H(A,z)=
 \prod\limits_{i\geq1}^{\longleftarrow}(1+a_iz)
 \prod\limits_{i\geq1}^{\longrightarrow}\dfrac{1}{1-a_iz}.\]
Write $Q_n(A)$ as $Q_n$ for short and let $Q_\al=Q_{\al_1}\cdots Q_{\al_r},~\al=(\al_1,\dots,\al_r)\vDash n$, then
\[Q_0=1,\,Q_n=\sum_{k=0}^nE_kH_{n-k},\,n\geq1.\]
We also let $Q_n=0,\,n<0$ for convenience. It should be noticed that $Q(A,z)\neq\prod\limits_{i\geq1}^{\longleftarrow}\dfrac{1+a_iz}{1-a_iz}$ or $\prod\limits_{i\geq1}^{\longrightarrow}\dfrac{1+a_iz}{1-a_iz}$. By definition it follows that $Q(A,z)Q(A,-z)=1$, that is, the Euler relations
\beq\label{np}\sum_{r+s=n}(-1)^rQ_rQ_s=0,\,\forall n\geq1\eeq
hold, or equivalently,
\beq\label{eo}\sum_{i=1}^{n-1}(-1)^{i-1}Q_iQ_{n-i}=\begin{cases}
2Q_n, & $if $ n $ is even$,\\
0, & $if $ n $ is odd$.
\end{cases}\eeq
When $Q_r$'s commute, the odd part of identity (\ref{eo}) is trivial. For the noncommutative case, it can also be deduced from the even part. In fact, iterative use of the even part implies that
\begin{prop}
When $n$ is even, we have
\beq\label{even}
Q_n=\sum_{\al\vDash_{\ti{odd}} n}(-1)^{\ell(\al)/2-1}C_{\ell(\al)/2-1}2^{-\ell(\al)+1}Q_\al.
\eeq
where $C_k=\tfrac{1}{k+1}{2k\choose k}\,(k\geq0)$, the $k$th Catalan number.
\end{prop}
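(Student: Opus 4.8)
My plan is to repackage the iterative use of the even part of (\ref{eo}) as a single generating-function identity and then read off coefficients. First I would split $Q(A,z)=\sum_{n\geq0}Q_nz^n$ into its even and odd parts in the variable $z$, writing $Q(A,z)=U(z)+V(z)$ with $U(z)=\sum_{k\geq0}Q_{2k}z^{2k}$ and $V(z)=\sum_{k\geq0}Q_{2k+1}z^{2k+1}$, so that $Q(A,-z)=U(z)-V(z)$. Substituting $z\mapsto-z$ in the Euler relation $Q(A,z)Q(A,-z)=1$ shows that $Q(A,-z)Q(A,z)=1$ as well, i.e. $(U+V)(U-V)=(U-V)(U+V)=1$ in $\mathcal{F}[[z]]$. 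Subtracting these two identities gives $U(z)V(z)=V(z)U(z)$, so $U(z)$ and $V(z)$ commute; adding them gives
\[U(z)^2-V(z)^2=1.\]

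Next, since $\mathrm{char}\,\bk=0$ and $V(z)$ has zero constant term, $1+V(z)^2$ is a power series with constant term $1$ lying in the \emph{commutative} subalgebra of $\mathcal{F}[[z]]$ generated by $U(z)$ and $V(z)$; inside that commutative ring its square root with constant term $1$ is unique and equals the binomial series $\sum_{k\geq0}\binom{1/2}{k}V(z)^{2k}$. As $U(z)$ has constant term $Q_0=1$ and $U(z)^2=1+V(z)^2$, I would conclude
\[U(z)=\sum_{k\geq0}\binom{1/2}{k}V(z)^{2k}.\]
The one delicate point is exactly the legitimacy of this square-root/binomial manipulation inside the \emph{noncommutative} ring $\mathcal{F}[[z]]$; this is why I first record that $U(z)$ and $V(z)$ commute, after which everything takes place in a commutative subring and the standard formal-power-series facts (existence and uniqueness of the square root with unit constant term, validity of $(1+t)^{1/2}=\sum_k\binom{1/2}{k}t^k$) apply verbatim.

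Finally I would extract the coefficient of $z^n$ for even $n\geq2$. Expanding the product shows that $V(z)^{2k}=\sum Q_{\al_1}\cdots Q_{\al_{2k}}\,z^{\al_1+\cdots+\al_{2k}}$, the sum over all $(\al_1,\dots,\al_{2k})$ with every $\al_i$ a positive odd integer, so the coefficient of $z^n$ in $V(z)^{2k}$ is $\sum_{\al\,\vDash_{\ti{odd}}\,n,\;\ell(\al)=2k}Q_\al$; the $k=0$ term contributes nothing since $n\geq2$, whence
\[Q_n=\sum_{k\geq1}\binom{1/2}{k}\sum_{\al\,\vDash_{\ti{odd}}\,n,\;\ell(\al)=2k}Q_\al.\]
Plugging in the classical evaluation $\binom{1/2}{k}=(-1)^{k-1}2^{1-2k}C_{k-1}$ for $k\geq1$ (immediate on comparing $\sqrt{1-4x}=\sum_{k\geq0}\binom{1/2}{k}(-4x)^k$ with $\sqrt{1-4x}=1-2\sum_{k\geq1}C_{k-1}x^k$) and reindexing by $\ell(\al)=2k$ turns the last display into exactly (\ref{even}). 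I expect the only real obstacle to be the noncommutative square-root bookkeeping just discussed; the remainder is routine coefficient extraction together with this single Catalan-number identity.
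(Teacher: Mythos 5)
Your proof is correct, but it takes a genuinely different route from the paper's. The paper proves (\ref{even}) by iterating the even case of (\ref{eo}) directly---splitting $n$ into $\ell(\al)/2$ even blocks and then each block into two odd parts, with a Catalan number counting the iterations---whereas you package that recursion once and for all into the identity $U(z)^2=1+V(z)^2$ for the even/odd parts of $Q(A,z)$ and solve it by the binomial series. Your square-root step does go through, though the cleanest justification is not ``standard power series facts apply verbatim'' (your commutative subring is not literally $\bk[[t]]$): having shown $UV=VU$, set $S=\sum_{k\geq0}\binom{1/2}{k}V^{2k}$; then $S^2=1+V^2=U^2$, $S$ commutes with $U$, and $(U-S)(U+S)=0$ with $U+S$ invertible in $\mathcal{F}[[z]]$ (constant term $2$, char $0$), so $U=S$. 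The coefficient extraction and the evaluation $\binom{1/2}{k}=(-1)^{k-1}2^{1-2k}C_{k-1}$ then give exactly (\ref{even}). One trade-off is worth flagging: your derivation of $UV=VU$ uses the odd-degree coefficients of $Q(A,z)Q(A,-z)=1$, i.e.\ precisely the odd Euler relations, whereas the paper's proof deliberately uses only the even part of (\ref{eo}). For the proposition as stated (about the elements $Q_n\in\mathrm{NSym}$, where all of (\ref{np}) holds by definition) this is harmless; but the paper immediately applies (\ref{even}) to deduce the odd part of (\ref{eo}) from the even part, and for that purpose your argument would be circular. What your approach buys instead is a short, fully rigorous bookkeeping of the coefficients (the paper's two-step counting is left rather terse), with the commutativity of the even and odd parts appearing as an explicit intermediate identity.
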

\begin{proof}
Fix $\al\vDash_{\bs{odd}} n$ with odd parts. In order to divide $n$ into $\al$, we can first divide $n$ into $\ell(\al)/2$ even parts, and then split each part into two odd ones to get $\al$. In our case, the first step contributes coefficient $C_{\ell(\al)/2-1}(-2)^{-\ell(\al)/2+1}$, while the second step gives $2^{-\ell(\al)/2}$. They combine to give the desired coefficient of $Q_\al$ on the RHS of (\ref{even}).
\end{proof}

\noindent
Now for $n=2k+1,\,k\geq0$,
\[\sum_{i=1}^{n-1}(-1)^{i-1}Q_iQ_{n-i}=
-\sum_{i=0}^{k-1}Q_{2i+1}Q_{2(k-i)}+
\sum_{i=1}^{k}Q_{2i}Q_{2(k-i)+1}.\]
If we expand those $Q_r$'s with $r$ even by (\ref{even}), the terms will cancel in pairs.

%


\medskip
In general, let $t$ be an indeterminate, $\bk(t)$ be the rational field and $\mathcal {F}(t):=\bk(t)\lan\lan a_1,a_2,\dots\ran\ran$. Define the generating sequence in $\mathcal{F}(t)[[z]]$,
 \[Q(A,t,z):=\sum_{n\geq0}Q_n(A,t)z^n=E(A,-tz)H(A,z)=
 \prod\limits_{i\geq1}^{\longleftarrow}(1-a_itz)
 \prod\limits_{i\geq1}^{\longrightarrow}\dfrac{1}{1-a_iz}.\]

Note that $Q_n(A,0)=H_n(A)$, thus
$\{Q_\al(t):=Q_\al(A,t)\}_{\al\in \mc}$ also forms a $\bZ[t]$-basis of $\ms{NSym}$. We remark that $Q_n(A,t)$ is just the $(1-t)$-transform $H_n((1-t)A)$ of $H_n(A)$ discussed in \cite[\S 5]{KLT}. Moreover, by \cite[Eq.(68)]{KLT}
\beq\label{coq}
\De(Q_n(t))=\sum_{k=0}^n Q_k(t)\ot Q_{n-k}(t).\eeq
In particular, $Q_n(A)$ is just the $(1-t)$-transform of $Q_n(A,t)$ at $t=-1$ (see also \cite[\S 2]{BHT}).



\subsection{The peak subalgebra and its Hopf dual}
Let $\mP$ be the Hopf subalgebra of NSym generated by $Q_n\,(n\geq1)$. Then $\mP_n:=\mP\cap\mb{NSym}_n$ is isomorphic to the \textit{peak algebra} of the symmetric group $\mathfrak{S}_n$ when endowed with the internal product \cite{BHT,Sch}. According to (\ref{coq}), one can define a surjective Hopf algebra homomorphism
\[\Te:\mb{NSym}\rw\mP,\quad H_n\mapsto Q_n,\,n\geq1.\]
From \cite[Theorem 5.4]{BMSW1}, we know that $\mb{Ker }\Te$ is the Hopf ideal of NSym generated by
\[\mH_{2n}:=\sum_{i+j=2n}(-1)^iH_iH_j,\,n\geq1,\]
which correspond to the (even) Euler relations (\ref{np}). Equivalently,
$\{Q_\al\}_{\al\vDash_{\ti{odd}}n}$ forms a linear basis of $\mP_n$, according to \cite[Main Theorem 3]{Sch}.

It is well-known that the graded Hopf dual of NSym is the algebra of \textit{quasisymmetric functions}, denoted by QSym \cite{MR}.
It is a subring of the power series ring $\bk[[x_1,x_2,\dots]]$ in the commuting variables $x_1,x_2,\dots$ and has a linear basis, the \textit{monomial quasisymmetric functions}, defined by
\[M_\al:=M_\al(x)=\sum\limits_{i_1<\cdots< i_r}x_{i_1}^{\al_1}\cdots x_{i_r}^{\al_r},\]
where $\al=(\al_1,\dots,\al_r)$ varies over the set $\mc$ of compositions. There is another important basis, the \textit{fundamental quasisymmetric functions}, defined by
\[F_\al:=F_\al(x)=\sum\limits_{i_1\leq\cdots\leq i_n\atop i_k<i_{k+1}\mb{ \tiny if }k\in D(\al)}x_{i_1}\cdots x_{i_n},\,\al\vDash n.\]
In other words, $F_\al=\sum_{\be\leq\al}M_\be$. Meanwhile, the canonical pairing $\lan\cdot,\cdot\ran$ between NSym and QSym is defined by
\[\lan H_\al,M_\be\ran=\lan R_\al,F_\be\ran=\de_{\al,\be}\]
for any $\al,\be\in\mc$.

Let $\La$ be the graded ring of symmetric functions in the commuting variables $x_1,x_2,\dots$, with integer coefficients, and $\Om$ be the subring of $\La$ generated by the symmetric functions $q_n\,(n\geq1)$, which are defined by
\[\sum_{n\geq0}q_nz^n=\prod_{i\geq1}\dfrac{1+x_iz}{1-x_iz}.\]
For the basics of this subring $\Om$ and Schur's Q-functions, one can refer to \cite[Ch. III, \S 8]{Mac}, where $\Om$ is denoted as $\Ga$.
There exists a Hopf algebra epimorphism
\[\te:\La\rw\Om,\quad h_n\mapsto q_n,\,n\geq1.\]
Then $\te(p_n)=(1-(-1)^n)p_n,\,n\geq1$, where $p_n$'s are the power-sum symmetric functions. Also let \[\pi:\mb{NSym}\rw\La,\,H_n\mapsto h_n\]
be the \textit{forgetful map}.

Now we introduce the famous \textit{Stembridge algebra} $\mB$ of peak functions defined in \cite{Ste}. This is a Hopf subalgebra of QSym.
In order to define the usual bases of $\mP$ and $\mB$, we recall the concept of peak subsets of $[n]$. A subset $P\subseteq[n]$ is called a \textit{peak set} in $[n]$ if $P\subseteq[2,n-1]$ and $i\in P\Rightarrow i-1\notin P$. Denote by $\pp_n$ the collection of peak sets in $[n]$, $\pp:=\bigcup\limits^._{n\geq1}\pp_n$, and $\emp_n$ the empty set $\emp$ in $\pp_n$. Given $\al=(\al_1,\dots,\al_r)\vDash n$, let
\[P(\al):=\{x\in[2,n-1]\,:\,x\in D(\al),\,x-1\notin D(\al)\}\]
be its associated peak set in $[n]$. For any $P\in\pp_n$, define
\beq\label{pi}\Pi_P=\sum_{P(\al)=P}R_\al\in\mb{NSym}.\eeq
Then $\{\Pi_P\}_{P\in\pp_n}$ forms a linear basis of $\mP_n$ \cite[\S 2]{BHT}. Note that by \cite[Eq.(6)]{BHT},
\[Q_n=2\Pi_{\emp_n}=2\sum_{k=0}^{n-1}R_{1^k,n-k},\,n\geq1.\]

On the other hand, Stembridge's \textit{peak functions} in $\mB$ can be defined by \cite[Prop. 3.5]{Ste}
\beq\label{ka}K_P:=2^{|P|+1}\sum_{\al\vDash n\atop P\subseteq D(\al)\triangle(D(\al)+1)}F_\al,\,P\in\pp_n,\eeq
where $D\triangle(D+1)=D\backslash(D+1)\cup (D+1)\backslash D$ for any $D=\{D_1<\cdots<D_r\}\subseteq[n-1]$ and $D+1:=\{x+1\,:\,x\in D\}$. Then $\{K_P\}_{P\in\pp_n}$ forms a linear basis of $\mB_n$ and there also exists a surjective Hopf algebra homomorphism
\[\vt:\mb{QSym}\rw\mB,\quad F_\al\mapsto K_{P(\al)}.\]
The coproduct formula of peak functions can be found in \cite[Lemma 1.4]{BMSW}. By \cite[Prop. 2.2]{Ste},
\[K_P=\sum_{\al\vDash n\atop P\subseteq D(\al)\cup(D(\al)+1)}2^{\ell(\al)}M_\al,\,P\in\pp_n\]
and in particular, by \cite[(2.5)]{Ste},
\beq\label{ke}
K_{\emp_n}=q_n=2\sum_{\al\vDash n}F_\al=\sum_{\al\vDash n}2^{\ell(\al)}M_\al.\eeq

The Hopf subalgebra $\mP$ can be regarded as a noncommutative lift of $\Om$, in which we shall find a lift of Schur's Q-functions. The following commutative diagrams illustrate the situation.
\[\xymatrix@=2em{\mb{NSym}\ar@{->}[r]^-{\Te}\ar@{->}[d]_-{\pi}
&\mP\ar@{->}[d]^-{\pi}\\
\La\ar@{->}[r]^-{\te}&\Om},\quad
\xymatrix@=2em{\mb{QSym}\ar@{->}[r]^-{\vt}
&\mB\\
\La\ar@{->}[r]^-{\te}\ar@{->}[u]&\Om\ar@{->}[u]},\]
where the vertical maps in the second diagram are inclusions.

In fact, one can define a graded Hopf dual pairing
\[[\cdot,\cdot]:\mP\times\mB\rw\bk,\quad [\Pi_P,K_Q]=\de_{P,Q},\,P,Q\in\pp,\]
which satisfies the following property \cite[Cor. 5.6.]{Sch},
\[\lan\Te(F),f\ran=\lan F,\vt(f)\ran=[\Te(F),\vt(f)],\, F\in\mb{NSym},f\in\mb{QSym}.\]
In particular, when $f\in\La$, then
\beq\label{bi}[\Te(F),\te(f)]=\lan\Te(F),f\ran
=\lan\pi\Te(F),f\ran=[\pi\Te(F),\te(f)],\eeq
where the rightmost one is the canonical inner product $[\cdot,\cdot]$ on $\Om$ defined by
\[[p_\la,p_\mu]=z_\la2^{-\ell(\la)}\de_{\la,\mu}\]
for any strict partitions $\la,\mu$.

\section{A noncommutative lift of Schur's Q-functions}

Now we are in the position to give our main construction. Given $f\in\mB$, we define the adjoint operator $f^\perp\in\mb{End}(\mP)$ by  \[[f^\perp(H),g]=[H,fg]\]
for any $H\in\mP,\,g\in\mB$. Similarly for $f\in\Om$, define  $f^\perp\in\mb{End}(\Om)$ by
\[[f^\perp(h),g]=[h,fg],\,g,h\in\Om.\]

\begin{defn}
We define the formal power series $\bY(z)$ in $\mb{End}(\mP)[[z,z^{-1}]]$ via
\beq\bY(z)=\sum_{n\in\bZ}\bY_n z^{-n}=
\lb\sum_{n\geq0}Q_nz^n\rb\lb\sum_{n\geq0}K_{\emp_n}^\perp(-z)^{-n}\rb,\eeq
i.e.
\begin{equation}\label{e:nY}
\bY_n=\sum_{i\geq0}(-1)^iQ_{-n+i}K_{\emp_i}^\perp,\,n\in\bZ.
\end{equation}

Note that (\ref{e:nY}) is well-defined, as $\bY_n$ has a finite expansion when acting on the linear basis $Q_{\al}$,
$\al\vDash_{\ti{odd}}n$ (see Lemma \ref{kq}). For $\al=(\al_1,\dots,\al_r)\in\bZ^r$, define
\[\sq_\al:=\bY_{-\al}(1)=\bY_{-\al_1}\cdots\bY_{-\al_r}(1).\]
\end{defn}

\smallskip
The following lemma shows that $\sq_\al$'s are a lift of Schur's Q-functions onto NSym, thus are called the \textit{noncommutative Schur Q-functions} (abbreviated as NSQF).
\begin{lem}
(1) For any $f\in\Om\subset\mB$, $\pi f^\perp=f^\perp \pi$.

(2) $\pi\bY(z)=Y(z)\pi$, where $Y(z)$ is the twisted vertex operator on $\Om$ defined in \cite{J1},
\[Y(z)=\sum_{n\in\bZ}Y_n z^{-n}=
\lb\sum_{n\geq0}q_nz^n\rb\lb\sum_{n\geq0}q_n^\perp(-z)^{-n}\rb=
\mb{exp}\lb\sum_{n\in\bN_{\bs{odd}}}\dfrac{2p_n}{n}z^n\rb
\mb{exp}\lb-\sum_{n\in\bN_{\bs{odd}}}\dfrac{2p_n^\perp}{n}z^{-n}\rb.\]

(3) For any $\al\in\mc$, $\pi(\sq_\al)=\mS_\al$, where
$\mS_\al$ is the Schur Q-function indexed by $\al$.
\end{lem}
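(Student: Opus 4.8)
The plan is to prove the three parts in order, with part (1) carrying essentially all the content and parts (2)--(3) following formally. For (1), the strategy is: since $\pi$ restricts to a map $\mP\rw\Om$ (because $\pi\Te=\te\pi$), both $\pi(f^\perp(H))$ and $f^\perp(\pi(H))$ lie in $\Om$, and two elements of $\Om$ coincide as soon as they pair equally with all of $\Om$, the inner product on $\Om$ being nondegenerate (Schur's $P$- and $Q$-functions are dual bases). So I would fix $f\in\Om$, $H\in\mP$, take an arbitrary $g\in\Om$, and run the chain
\[[\pi(f^\perp(H)),g]=[f^\perp(H),g]=[H,fg]=[\pi(H),fg]=[f^\perp(\pi(H)),g].\]
Here the first and third equalities are the compatibility relation (\ref{bi}), read as $[H',g]=[\pi(H'),g]$ for $H'\in\mP$, $g\in\Om$ (applied to $H'=f^\perp(H)$ and to $H'=H$, with $fg\in\Om$), while the middle and last equalities are the defining property of $f^\perp$ on $\mP$ and on $\Om$ respectively. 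Crucially this uses that $\Om$ is a \emph{subalgebra} of $\mB$, so $fg\in\Om$ and (\ref{bi}) is applicable to the second slot. Nondegeneracy of $[\cdot,\cdot]$ on $\Om$ then yields $\pi(f^\perp(H))=f^\perp(\pi(H))$.

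For (2), I would use that $\pi$ is an algebra homomorphism with $\pi(Q_m)=q_m$: indeed $Q_m=\Te(H_m)$, so $\pi\Te(H_m)=\te(h_m)=q_m$. Hence left multiplication by $Q_m$ on $\mP$ is carried by $\pi$ to left multiplication by $q_m$ on $\Om$. Also $K_{\emp_n}=q_n$ by (\ref{ke}), so $K_{\emp_n}^\perp$ is the adjoint of $q_n\in\Om$ and part (1) gives $\pi K_{\emp_n}^\perp=q_n^\perp\pi$. Applying $\pi$ to the expansion (\ref{e:nY}) term by term on any $H\in\mP$,
\[\pi\bY_n(H)=\sum_{i\geq0}(-1)^i\,\pi\!\left(Q_{-n+i}\,K_{\emp_i}^\perp(H)\right)=\sum_{i\geq0}(-1)^i\,q_{-n+i}\,q_i^\perp(\pi(H))=Y_n(\pi(H)),\]
the last step being the definition of $Y_n$ as the $z^{-n}$-coefficient of $\bigl(\sum q_nz^n\bigr)\bigl(\sum q_n^\perp(-z)^{-n}\bigr)$. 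The sums are finite on each basis element (as remarked after the definition of $\bY_n$), so this manipulation is legitimate; collecting powers of $z$ gives $\pi\bY(z)=Y(z)\pi$.

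For (3), I would iterate (2) together with $\pi(1)=1$: writing $\al=(\al_1,\dots,\al_r)$,
\[\pi(\sq_\al)=\pi\!\left(\bY_{-\al_1}\cdots\bY_{-\al_r}(1)\right)=Y_{-\al_1}\cdots Y_{-\al_r}(\pi(1))=Y_{-\al}(1)=\mS_\al,\]
where the final equality is the vertex-operator realization of Schur's Q-functions from \cite{J1}, in which $Y_{-\al}(1)=\mS_\al$ holds for \emph{every} composition $\al$, not only for strict partitions.

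I expect the main obstacle to be part (1): one has to be careful to apply (\ref{bi}) in the correct direction (the $\mP$--$\mB$ pairing restricted to $\mP\times\Om$ is the $\Om$-inner product precomposed with $\pi$ in the first slot), and to notice that $\Om$ being closed under multiplication inside $\mB$ is exactly what keeps $fg$ in the domain where (\ref{bi}) is available; after that, nondegeneracy of the $\Om$-pairing finishes it. Parts (2) and (3) are then bookkeeping around multiplicativity of $\pi$, the identities $\pi(Q_m)=q_m$ and $K_{\emp_n}=q_n$, and the known output of the construction in \cite{J1}.
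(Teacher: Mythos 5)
Your proposal is correct and follows essentially the same route as the paper: the same chain of equalities via (\ref{bi}) and the defining property of $f^\perp$ (with nondegeneracy of the pairing on $\Om$ implicit in the paper), then (2) from $\pi(Q_n)=q_n$ and $K_{\emp_n}=q_n$, and (3) by iterating (2) and invoking the vertex-operator realization $\mS_\al=Y_{-\al}(1)$ from \cite{J1}.
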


\begin{proof}
For (1), according to (\ref{bi}),
\[[\pi f^\perp(H),g]=[f^\perp(H),g]=[H,fg]=[\pi(H),fg]=[f^\perp \pi(H),g]\]
for any $H\in\mP,\,g\in\Om$. Hence, $\pi f^\perp=f^\perp \pi$. Combining (1) with the identities $\pi(Q_n)=q_n,\,n\geq0$ and (\ref{ke}), one gets (2).

On the other hand,  we know that  $\mS_\al=Y_{-\al}(1)$ by \cite[Theorem 5.9]{J1}. Moreover, by \cite[Prop. 4.15]{J1} we have
\[\{Y_n,Y_m\}=Y_nY_m+Y_mY_n=(-1)^n2\de_{n,-m},\,n,m\in\bZ.\]
Hence, one gets the basis of Schur's Q-functions indexed by strict partitions. Now (3) follows from (2).
\end{proof}

\begin{lem}\label{kq}
For any $n\geq1$ and $\al=(\al_1,\dots,\al_r)\in\mc$, we have
\[K_{\emp_n}^\perp(Q_\al)=\sum_{\be\in\bN_0^r\atop |\be|=n}2^{\ell(\be)}Q_{\al-\be}.\]
\end{lem}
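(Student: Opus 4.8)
The plan is to compute the action of the adjoint operator $K_{\emp_n}^\perp$ on products $Q_\al$ by reducing to the single-factor case via the coproduct, and then to evaluate $K_{\emp_n}^\perp(Q_m)$ directly. First I would record the basic fact that, for any $f\in\mB$, the adjoint $f^\perp$ is a ``skew-derivation'' relative to the coproduct $\De$ on $\mP$: if $\De(f)=\sum f_{(1)}\ot f_{(2)}$ in $\mB$, then by the defining property $[f^\perp(HH'),g]=[HH',fg]$ and the fact that the pairing $[\cdot,\cdot]$ turns the product of $\mB$ into the coproduct of $\mP$ and vice versa, one gets
\[
f^\perp(HH')=\sum \bigl(f_{(1)}^\perp(H)\bigr)\bigl(f_{(2)}^\perp(H')\bigr).
\]
Since $K_{\emp_n}=q_n$ and the coproduct is $\De(q_n)=\sum_{i+j=n}q_i\ot q_j$ (equivalently $\De(K_{\emp_n})=\sum_{i+j=n}K_{\emp_i}\ot K_{\emp_j}$, which follows from (\ref{coq}) under $\Te$, or directly from (\ref{ke})), iterating gives
\[
K_{\emp_n}^\perp(Q_{\al_1}\cdots Q_{\al_r})=\sum_{i_1+\cdots+i_r=n}\bigl(K_{\emp_{i_1}}^\perp(Q_{\al_1})\bigr)\cdots\bigl(K_{\emp_{i_r}}^\perp(Q_{\al_r})\bigr).
\]

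Next I would establish the one-variable formula $K_{\emp_i}^\perp(Q_m)=2\,Q_{m-i}$ for $i\ge 1$ (and $=Q_m$ for $i=0$). This is the crux of the computation. One way: use the coproduct $\De(Q_m)=\sum_{k=0}^m Q_k\ot Q_{m-k}$ from (\ref{coq}) together with the pairing identity $[K_{\emp_i}^\perp(Q_m),K_P]=[Q_m,K_{\emp_i}K_P]$, expand $K_{\emp_i}K_P=q_iK_P$ in the peak-function basis, and match coefficients; since $q_i=K_{\emp_i}$ pairs with $\Pi_{\emp_k}$ in a controlled way and $Q_k=2\Pi_{\emp_k}$, the only surviving contributions pick out $Q_{m-i}$ with the factor $2$. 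Alternatively, and perhaps more cleanly, I would apply $\pi$: by Lemma~3.2(1) we have $\pi K_{\emp_i}^\perp=q_i^\perp\pi$, and $\pi(Q_m)=q_m$; in $\Om$ the classical identity $q_i^\perp(q_m)=2q_{m-i}$ (for $i\ge1$) is standard — it follows from $q_i^\perp$ being the $p$-adjoint expansion and $[q_i,\cdot]$ acting on the $q$-basis, or directly from the generating-function identity $\sum q_n z^n\cdot\sum q_n z^n$ type manipulations in Macdonald. However, passing through $\pi$ only determines the image in $\Om$, not in $\mP$ itself, so I would instead argue directly in $\mP$: expand $Q_m=2\sum_{k=0}^{m-1}R_{1^k,m-k}$ and use the known action of $q_i^\perp=K_{\emp_i}^\perp$ on ribbon functions, or simply verify the claim via the dual pairing on the $\Pi_P$/$K_P$ bases, which is a finite check.

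Assembling the two ingredients: substituting $K_{\emp_{i_j}}^\perp(Q_{\al_j})=2^{[i_j\ge1]}Q_{\al_j-i_j}$ into the iterated coproduct formula and writing $\be=(i_1,\dots,i_r)\in\bN_0^r$ with $|\be|=n$, the factor $\prod_j 2^{[i_j\ge1]}$ is exactly $2^{\ell(\be)}$ by the definition of $\ell$ on $\bN_0^r$, and $\prod_j Q_{\al_j-i_j}=Q_{\al-\be}$ with the convention $Q_k=0$ for $k<0$. This gives precisely $K_{\emp_n}^\perp(Q_\al)=\sum_{\be\in\bN_0^r,\,|\be|=n}2^{\ell(\be)}Q_{\al-\be}$, and also confirms the well-definedness remark after (\ref{e:nY}), since for fixed $n$ only finitely many $\be$ contribute.

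The main obstacle I anticipate is the rigorous derivation of the skew-derivation property $f^\perp(HH')=\sum f_{(1)}^\perp(H)\,f_{(2)}^\perp(H')$ — one must be careful that the pairing between $\mP$ and $\mB$ is a Hopf pairing (so that the product on one side is dual to the coproduct on the other, with the correct ordering of tensor factors in the noncommutative setting), and that $q_n$ is genuinely primitive-like with the symmetric coproduct $\sum q_i\ot q_{n-i}$ in $\mB$; the latter is where I would lean on (\ref{ke}) and (\ref{coq}) via the surjection $\vt$. The single-variable evaluation $K_{\emp_i}^\perp(Q_m)=2Q_{m-i}$ is routine once the coproduct of $Q_m$ and the pairing $[Q_k,K_P]$ are in hand, but it is the step that produces the crucial power of $2$, so I would present it with care rather than as a black box.
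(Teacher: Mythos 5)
Your proposal is correct and follows essentially the same route as the paper: the skew-derivation property $f^\perp(GH)=\sum f_{(1)}^\perp(G)f_{(2)}^\perp(H)$ from the Hopf pairing, the iterated coproduct $\De^{(r-1)}(K_{\emp_n})=\sum_{|\be|=n}K_{\emp_{\be_1}}\ot\cdots\ot K_{\emp_{\be_r}}$, and the single-factor evaluation $K_{\emp_n}^\perp(Q_m)=2^{1-\de_{n,0}}Q_{m-n}$ obtained by pairing against an arbitrary $f\in\mB$, using $\De(Q_m)=\sum_k Q_k\ot Q_{m-k}$ and $Q_k=2^{1-\de_{k,0}}\Pi_{\emp_k}$. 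The only cosmetic difference is that the paper's computation $[Q_m,K_{\emp_n}f]=\sum_i[Q_i,K_{\emp_n}][Q_{m-i},f]$ makes expanding the product $K_{\emp_n}K_P$ in the peak basis unnecessary, which slightly streamlines the crucial step producing the factor $2$.
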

\begin{proof}
Since $[\cdot,\cdot]$ is a Hopf dual pair, one can easily see that (for example, \cite[Lemma 2.4]{BBSSZ})
\[f^\perp(GH)=\sum f_{(1)}^\perp(G)f_{(2)}^\perp(H)\]
for any $f\in\mB$ and $G,H\in\mP$, and $\Delta(f)=\sum f_{(1)}\otimes f_{(2)}$
in Sweedler's notation.
Meanwhile, for any $f\in\mB$,
\[[K_{\emp_n}^\perp(Q_m),f]=[Q_m,K_{\emp_n}f]=\sum_{i=0}^m
[Q_i,K_{\emp_n}][Q_{m-i},f]=2^{1-\de_{n,0}}[Q_{m-n},f]\]
as $Q_m=2^{1-\de_{m,0}}\Pi_{\emp_m}$. It means that
\[K_{\emp_n}^\perp(Q_m)=2^{1-\de_{n,0}}Q_{m-n}.\]
In particular, the formula
\[\De^{(r-1)}(K_{\emp_n})=\sum_{\be\in\bN_0^r\atop |\be|=n}
K_{\emp_{\be_1}}\ot\cdots\ot K_{\emp_{\be_r}},\,r\geq2\]
gives the desired result.
\end{proof}

%
%
%

\begin{defn}
For an ordered set $X=\{x_1,x_2,\dots\}$ of commuting variables which also commute with the letters $a_1,a_2,\dots$ in $A$, we define the noncommutative analogue of the \textit{Cauchy kernel} associated with Schur's Q-functions.
\[\Xi_X:=\prod_{i\geq1}\lb\prod\limits_{j\geq1}^{\longleftarrow}(1+x_ia_j)
 \prod\limits_{j\geq1}^{\longrightarrow}\dfrac{1}{1-x_ia_j}\rb=\sum_{\al\in \mc}M_\al(X)Q_\al(A)=\sum_{P\in\pp}K_P(X)\Pi_P(A).\]
\end{defn}
\noindent In particular, when $X$  has only one variable $z$, $\Xi_z=\sum_{n\geq0}Q_n(A)z^n$.

\begin{lem}\label{ck}
The noncommutative Cauchy kernel $\Xi_X$ has the following properties:

(1) $\Xi_{z,X}=\Xi_z\Xi_X$, where $z,X$ denotes the alphabet $\{z,x_1,x_2,\dots\}$.

(2) For any $f\in\mB$, $f^\perp(\Xi_X)=f(X)\Xi_X$, where $f^\perp$ only acts on the quasisymmetric part of $\Xi_X$.
\end{lem}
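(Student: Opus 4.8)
The plan is to prove the two stated properties of the noncommutative Cauchy kernel $\Xi_X$ by exploiting the factorization of the kernel over the variables of $X$, together with the Hopf-pairing characterization of the adjoint operators $f^\perp$.

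For part (1), the key observation is that the product defining $\Xi_X$ is taken over the variables $x_i$ of $X$, and each factor $\prod_{j\geq1}^{\longleftarrow}(1+x_ia_j)\prod_{j\geq1}^{\longrightarrow}\tfrac{1}{1-x_ia_j}$ depends only on the single variable $x_i$. Hence adjoining the new variable $z$ simply inserts one more factor, and since $z$ and the $x_i$ all commute (and commute with the $a_j$), we may place that factor first; this factor is exactly $\Xi_z=\sum_{n\geq0}Q_n(A)z^n$ by the remark following the definition. Care is needed only about the order of the two internal products in $j$ — the leftward product $\prod^{\longleftarrow}$ in the $a_j$ and the rightward product $\prod^{\longrightarrow}$ — but since the $z$-factor and any $x_i$-factor are built from the same $a_j$'s in the same prescribed order and differ only in the commuting scalar in front, the whole $z$-block commutes with each $x_i$-block as a unit, which is what is needed. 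So (1) is essentially a formal bookkeeping statement about the noncommutative infinite product, and I would present it as such.

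For part (2), I would use the expansion $\Xi_X=\sum_{\al\in\mc}M_\al(X)Q_\al(A)$ and the defining relation $[f^\perp(H),g]=[H,fg]$. The cleanest route is to note that for $f\in\mB$ the operator $f^\perp$ is a coderivation-type map satisfying $f^\perp(GH)=\sum f_{(1)}^\perp(G)f_{(2)}^\perp(H)$ (recalled in the proof of Lemma \ref{kq}), so one reduces to the one-variable case $\Xi_z=\sum_n Q_n(A)z^n$. There it suffices to check the claim on the topological generators $K_{\emp_n}$ of $\mB$: by Lemma \ref{kq} together with $K_{\emp_n}^\perp(Q_m)=2^{1-\de_{n,0}}Q_{m-n}$ and $K_{\emp_n}(z)=q_n(z)=\sum_{j}2^{1-\de_{j,0}}z^j\cdot(\text{?})$ — more precisely $K_{\emp_n}$ evaluated at the single variable $z$ equals $2z^n$ for $n\ge1$ and $1$ for $n=0$ — one verifies $K_{\emp_n}^\perp(\Xi_z)=K_{\emp_n}(z)\Xi_z$ directly. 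Then multiplicativity of $f\mapsto f^\perp$ as an algebra map from $\mB$ to $\mathrm{End}(\mP)$ (using that $[\cdot,\cdot]$ is a Hopf pairing, so $(fg)^\perp=g^\perp f^\perp$) extends the identity from the generators $K_{\emp_n}$ to all of $\mB$, and part (1) bootstraps from one variable to finitely many and then to all of $X$.

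The main obstacle I anticipate is keeping the noncommutativity honest in part (1): one must be sure that the $z$-factor really can be pulled to the front past every $x_i$-factor, i.e. that two such single-variable blocks commute despite each being an honestly noncommutative element of $\mathcal F[[z,x_i]]$. The point is that they live in $\mathcal F[[x_i]]$ and $\mathcal F[[z]]$ respectively inside a larger ring where $z,x_i$ are central, but the coefficients in $\mathcal F$ do \emph{not} commute — so one needs the structural fact that $\Xi_z$ and $\Xi_{x_i}$ commute, which is most safely seen from $\Xi_z=\sum Q_n z^n$, $\Xi_{x_i}=\sum Q_m x_i^m$ and the fact that all the $Q_n(A)$ generate the \emph{commutative}... no — they do \emph{not} commute in $\mathcal P$. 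So the correct argument is not "the blocks commute" but rather that $\Xi_{z,X}$ is \emph{defined} with $z$ listed before the $x_i$'s (or: one checks the product over $\{z\}\cup X$ in the prescribed order factors as the $z$-part times the $X$-part because the outer product is over the $x$-variables which we are free to index so that $z$ comes first). I would therefore phrase (1) as an immediate consequence of the definition of $\Xi_X$ as an ordered product over the alphabet, with $z$ prepended, rather than as a commutation statement; this sidesteps the obstacle entirely, and part (2) then only needs the generator check plus multiplicativity of $(\cdot)^\perp$.
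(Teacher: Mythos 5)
Your part (1) is fine and is essentially the paper's argument: the paper simply says (1) is clear from the definition, and your final reformulation --- that $\Xi_{z,X}$ is the ordered product over the alphabet with $z$ prepended, so the $z$-factor splits off without any commutation claim --- is the right way to say it (your instinct that the single-variable blocks $\Xi_z$ and $\Xi_{x_i}$ do \emph{not} commute is correct, since the $Q_n$ do not commute in $\mP$).

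Part (2), however, has a genuine gap. Your plan is to verify the identity on the elements $K_{\emp_n}$ and then extend to all of $\mB$ by linearity and multiplicativity of $f\mapsto f^\perp$. But the $K_{\emp_n}=q_n$ do \emph{not} generate $\mB$ as an algebra: they are symmetric functions, so the subalgebra they generate is exactly $\Om$, which is strictly smaller than $\mB$ (for instance $\dim\Om_4=2$, spanned by $\mS_4,\mS_{31}$, while $\dim\mB_4=f_3=3$). Hence your argument proves $f^\perp(\Xi_X)=f(X)\Xi_X$ only for $f\in\Om$, not for arbitrary $f\in\mB$ as the lemma asserts; there is no way to reach a general peak function $K_P$ ($P\neq\emp$) by products and linear combinations of the $K_{\emp_n}$. (The individual ingredients you use are correct: $K_{\emp_n}^\perp(Q_m)=2^{1-\de_{n,0}}Q_{m-n}$ does give $K_{\emp_n}^\perp(\Xi_z)=2z^n\Xi_z=K_{\emp_n}(z)\Xi_z$, and $(fg)^\perp=g^\perp f^\perp$ holds; also the restricted statement for $f\in\Om$ would in fact suffice for the paper's later use, which only needs $f=K_{\emp_n}$. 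But it does not prove the lemma as stated.) The paper avoids the issue entirely by a one-line duality computation valid for every $f\in\mB$: writing $\Xi_X=\sum_{P}K_P(X)\Pi_P$ and expanding $f^\perp(\Pi_P)=\sum_{Q}[\Pi_P,fK_Q]\,\Pi_Q$, one swaps the sums and uses $\sum_P[\Pi_P,fK_Q]K_P(X)=(fK_Q)(X)$ to get $f^\perp(\Xi_X)=\sum_Q (fK_Q)(X)\Pi_Q=f(X)\Xi_X$. If you want to keep your structure, you would have to verify the identity directly on every basis element $K_P$, at which point the paper's dual-basis computation is the cleaner (and essentially unavoidable) route.
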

\begin{proof}
(1) is clear by the definition of $\Xi_X$. For (2),
\[\begin{split}
f^\perp(\Xi_X)&=\sum_{P\in\pp}K_P(X)f^\perp(\Pi_P)
=\sum_{P\in\pp}K_P(X)\sum_{Q\in\pp}[\Pi_P,fK_Q]\Pi_Q\\
&=\sum_{Q\in\pp}(fK_Q)(X)\Pi_Q=f(X)\Xi_X.
\end{split}\]
\end{proof}

In particular, let $\mK_z^\perp=\sum_{i\geq0}z^iK_{\emp_i}^\perp$. Then from Lemma \ref{ck} (2), we have
\[\mK_z^\perp\Xi_X=\sum_{i\geq0}z^iK_{\emp_i}(X)\Xi_X=\Xi_X\prod_{x\in X}\dfrac{1+zx}{1-zx}.\]

Using the notations above, we know that
\[\bY(z)=\lb\sum_{n\geq0}Q_nz^n\rb\lb\sum_{n\geq0}K_{\emp_n}^\perp(-z)^{-n}\rb
=\Xi_z\mK_{-1/z}^\perp.\]
Hence,
\[\bY(z)\Xi_X=\Xi_z\mK_{-1/z}^\perp\Xi_X=\Xi_z\Xi_X\prod_{x\in X}\dfrac{1-x/z}{1+x/z}=\Xi_{z,X}\prod_{x\in X}\dfrac{1-x/z}{1+x/z}.\]
More generally,
\[\bY(z_1)\cdots\bY(z_r)\Xi_X=\Xi_{z_1,\dots,z_r,X}\prod_{i=1}^r\lb\prod_{x\in \{z_{i+1},\dots,z_r\}\cup X}\dfrac{1-x/z_i}{1+x/z_i}\rb.\]
Now let $X=\emp$ and take the coefficient of $z_1^{\al_1}\cdots z_r^{\al_r}$ in the above series, we get
\begin{prop}\label{ras}
For $\al=(\al_1,\dots,\al_r)\in\bZ^r$,
\beq\label{ro}\sq_\al:=\bY_{-\al_1}\cdots\bY_{-\al_r}(1)=\prod_{1\leq i<j\leq r}\dfrac{1-R_{ij}}{1+R_{ij}}Q_\al,\eeq
where $R_{ij}Q_{\al}=Q_{R_{ij}\al}$ and $R_{ij}$ is the usual raising operator acting on $\bZ^r$ by
\[R_{ij}(\al_1,\dots,\al_r)=(\al_1,\dots,\al_i+1,\dots,\al_j-1,\dots,\al_r).\]
\end{prop}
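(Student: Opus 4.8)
The plan is to extract the coefficient of $z_1^{\al_1}\cdots z_r^{\al_r}$ from the identity
\[\bY(z_1)\cdots\bY(z_r)\Xi_X=\Xi_{z_1,\dots,z_r,X}\prod_{i=1}^r\Bigl(\prod_{x\in\{z_{i+1},\dots,z_r\}\cup X}\tfrac{1-x/z_i}{1+x/z_i}\Bigr)\]
established just above, specialized to $X=\emp$. First I would simplify both sides in that case: iterating Lemma \ref{ck}(1) together with the one-variable identity $\Xi_z=\sum_{n\geq0}Q_nz^n$ gives $\Xi_{z_1,\dots,z_r}=\Xi_{z_1}\cdots\Xi_{z_r}=\sum_{\be\in\bN_0^r}Q_\be\,z_1^{\be_1}\cdots z_r^{\be_r}$ with $Q_\be=Q_{\be_1}\cdots Q_{\be_r}$, while for $X=\emp$ the product over $i$ collapses to $\prod_{1\leq i<j\leq r}\tfrac{1-z_j/z_i}{1+z_j/z_i}$, to be expanded as a power series in the ratios $z_j/z_i$, i.e.\ with $\tfrac{1}{1+z_j/z_i}=\sum_{k\geq0}(-z_j/z_i)^k$, consistently with the expansion convention used to derive the displayed identity.

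Next I would match the two coefficient extractions. On the left, $\bY(z_1)\cdots\bY(z_r)(1)=\sum_{\ga\in\bZ^r}\bY_{\ga_1}\cdots\bY_{\ga_r}(1)\,z_1^{-\ga_1}\cdots z_r^{-\ga_r}$, so the coefficient of $z_1^{\al_1}\cdots z_r^{\al_r}$ is exactly $\sq_\al$. On the right, expand the rational factor as $\sum_{(k_{ij})}c_{(k_{ij})}\prod_{1\leq i<j\leq r}(z_j/z_i)^{k_{ij}}$, the sum running over all tuples of nonnegative integers $(k_{ij})_{1\leq i<j\leq r}$, where $c_{(k_{ij})}=\prod_{i<j}d_{k_{ij}}$ with $d_0=1$ and $d_k=2(-1)^k$ for $k\geq1$. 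Multiplying $\sum_{\be\in\bN_0^r}Q_\be\,z_1^{\be_1}\cdots z_r^{\be_r}$ by the monomial $\prod_{i<j}(z_j/z_i)^{k_{ij}}$ and reading off the coefficient of $z_1^{\al_1}\cdots z_r^{\al_r}$ leaves $Q_{\al+\sum_{i<j}k_{ij}(e_i-e_j)}$, where $e_1,\dots,e_r$ is the standard basis of $\bZ^r$ and we use the convention $Q_m=0$ for $m<0$. Since $R_{ij}$ shifts the index of $Q$ by $e_i-e_j$ and the $R_{ij}$ commute pairwise, this is exactly $\prod_{i<j}R_{ij}^{k_{ij}}Q_\al$.

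Summing over $(k_{ij})$ then identifies the coefficient of $z_1^{\al_1}\cdots z_r^{\al_r}$ on the right with $\sum_{(k_{ij})}c_{(k_{ij})}\prod_{i<j}R_{ij}^{k_{ij}}Q_\al$, which — interpreting $\tfrac{1}{1+R_{ij}}$ as $\sum_{k\geq0}(-R_{ij})^k$ to match the expansion chosen for $\tfrac{1}{1+z_j/z_i}$ — equals $\prod_{1\leq i<j\leq r}\tfrac{1-R_{ij}}{1+R_{ij}}\,Q_\al$; equating the two extractions gives (\ref{ro}). Along the way one checks that for fixed $\al$ only finitely many $(k_{ij})$ contribute: requiring each component of $\al+\sum_{i<j}k_{ij}(e_i-e_j)$ to be nonnegative bounds $\sum_{i<r}k_{ir}$ by $\al_r$, then $\sum_{i<r-1}k_{i,r-1}$ by $\al_{r-1}+k_{r-1,r}$, and so on down, so every $k_{ij}$ is bounded — this is exactly the finiteness making both $\sq_\al$ and the operator $\prod_{i<j}\tfrac{1-R_{ij}}{1+R_{ij}}$ applied to $Q_\al$ well defined.

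I expect the only delicate point to be the bookkeeping of the expansion conventions, so that ``multiply the generating series by $z_j/z_i$ and take the $z_1^{\al_1}\cdots z_r^{\al_r}$-coefficient'' matches ``apply $R_{ij}$'' on the nose — in particular keeping $\prod_{i<j}\tfrac{1-z_j/z_i}{1+z_j/z_i}$ expanded in nonnegative powers of each $z_j/z_i$, the same way the master identity was derived. Everything else is a routine translation between the generating-function and raising-operator formalisms.
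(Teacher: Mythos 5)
Your proposal is correct and follows essentially the same route as the paper: the paper's proof of Proposition \ref{ras} is exactly the specialization $X=\emp$ of the identity $\bY(z_1)\cdots\bY(z_r)\Xi_X=\Xi_{z_1,\dots,z_r,X}\prod_{i=1}^r\bigl(\prod_{x}\tfrac{1-x/z_i}{1+x/z_i}\bigr)$ followed by extraction of the coefficient of $z_1^{\al_1}\cdots z_r^{\al_r}$, with the ratios $z_j/z_i$ read as raising operators under the convention $Q_m=0$ for $m<0$. You merely spell out the coefficient bookkeeping and finiteness that the paper leaves implicit, so there is nothing to change.
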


\begin{cor}
For $\al=(\al_1,\dots,\al_r)\in\mc$, let $\al':=(\al_1,\dots,\al_{r-1})$, then
$\sq_\al$ satisfies the following recursive relation
\beq\label{rec}\sq_\al=\sum_{i=0}^{\al_r}\lb\sum_{\be\in\bN_0^{r-1}\atop |\be|=i}
\prod_{j=1}^{r-1}(-1)^{\be_j}2^{1-\de_{\be_j,0}}\sq_{\al'+\be}\rb Q_{\al_r-i}.\eeq
\end{cor}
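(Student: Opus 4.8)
The plan is to derive the recursion (\ref{rec}) directly from the raising operator formula (\ref{ro}) by peeling off the last variable $\bY_{-\al_r}$ and using Lemma \ref{kq}. Concretely, writing $\al'=(\al_1,\dots,\al_{r-1})$, we have by definition $\sq_\al=\bY_{-\al_1}\cdots\bY_{-\al_{r-1}}\bigl(\bY_{-\al_r}(1)\bigr)$, but this is not quite the right grouping since the $\bY$'s do not act on $1$ independently. Instead I would use the more useful observation that $\sq_{\al'}=\bY_{-\al_1}\cdots\bY_{-\al_{r-1}}(1)$ and then apply one more operator: from the Cauchy-kernel computation preceding Proposition \ref{ras} with $X=\emp$, one reads off that $\sq_\al$ is obtained from $\sq_{\al'}$ by the rule
\[
\sq_{(\al',m)} = \sum_{i\ge 0}(-1)^i Q_{m-i}\,K_{\emp_i}^\perp(\sq_{\al'})
\]
upon taking the appropriate coefficient; equivalently $\sq_{(\al',m)}=\bigl[\Xi_{z}\,\mK_{-1/z}^\perp(\sq_{\al'})\bigr]_{z^m}$, matching (\ref{e:nY}) with $n=-m$. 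Since only finitely many $K_{\emp_i}^\perp$ contribute (the sum truncates at $i=m$ once we hit degree reasons), this is a finite sum.

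The second step is to expand $K_{\emp_i}^\perp(\sq_{\al'})$. Here I would appeal to Lemma \ref{kq}, but $\sq_{\al'}$ is not literally a single $Q_\be$; it is a $\bZ$-linear (in fact $\bZ[1/2]$-linear, or after clearing denominators, rational) combination of the $Q_\be$'s via (\ref{ro}). However, the formula in the Corollary is stated with $\sq_{\al'+\be}$ on the right, not $Q_{\al'+\be}$, which suggests using instead the operator identity at the level of the creation operators: $K_{\emp_i}^\perp$ commutes past the raising-operator factor $\prod_{1\le p<q\le r-1}\frac{1-R_{pq}}{1+R_{pq}}$ because that factor only rearranges indices within the first $r-1$ slots while $K_{\emp_i}^\perp$ acts uniformly by lowering, so that $K_{\emp_i}^\perp(\sq_{\al'})=\sum_{\be\in\bN_0^{r-1},|\be|=i}2^{\ell(\be)}\sq_{\al'-\be}$ by the same computation as Lemma \ref{kq}. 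The cleanest way to see this is to note $\mK_z^\perp(\sq_{\al'})=\mK_z^\perp[\Xi_{z_1,\dots,z_{r-1}}\prod(\cdots)]_{\text{coeff}}$ and push $\mK_z^\perp$ through $\Xi$ using Lemma \ref{ck}(2), which introduces exactly the factor $\prod_{x\in\{z_1,\dots,z_{r-1}\}}\frac{1+zx}{1-zx}$; reading off coefficients gives the claimed $\sum 2^{\ell(\be)}\sq_{\al'-\be}$ with the sign $(-1)^{|\be|}$ absorbed when $z$ is specialized to $-1/z$.

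Combining these two steps: substitute $K_{\emp_i}^\perp(\sq_{\al'})=\sum_{\be\in\bN_0^{r-1},|\be|=i}2^{\ell(\be)}\sq_{\al'-\be}$ into $\sq_{(\al',\al_r)}=\sum_{i=0}^{\al_r}(-1)^i Q_{\al_r-i}\cdot(\text{that})$, then reindex by $\be\mapsto -\be$ to match the $\sq_{\al'+\be}$ appearing in (\ref{rec}), noting $2^{\ell(\be)}=\prod_{j=1}^{r-1}2^{1-\de_{\be_j,0}}$ and $(-1)^i=(-1)^{|\be|}=\prod_{j=1}^{r-1}(-1)^{\be_j}$, which reproduces exactly the stated coefficient. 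The one point requiring care — and the main obstacle — is the ordering: in (\ref{rec}) the factor $Q_{\al_r-i}$ sits to the \emph{right} of $\sq_{\al'+\be}$, whereas in (\ref{e:nY}) the $Q$ sits on the left; this is because $\bY_{-\al_r}$ is the \emph{last} operator applied, i.e. the outermost, so one must be careful that applying $\bY_{-\al_r}$ to the already-constructed $\sq_{\al'}$ puts the new $Q$-factor on the left in $\bY(z)=\Xi_z\mK_{-1/z}^\perp$ but, since $\sq_{\al'}\in\mP$ and $Q_{\al_r-i}\in\mP$ with $\mP$ noncommutative, one needs to verify the vertex-operator computation genuinely yields $Q_{\al_r-i}\sq_{\al'+\be}$ in that order. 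Tracking through $\bY(z_1)\cdots\bY(z_r)\Xi_X$ with the factors written left-to-right resolves this, since the displayed product formula before Proposition \ref{ras} already records the correct noncommutative order, and one simply specializes $X=\emp$ and extracts coefficients from the rightmost variable first.
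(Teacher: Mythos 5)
Your plan has a genuine structural error at its first step. The identity you base everything on,
\[
\sq_{(\al',m)} \;=\; \sum_{i\ge 0}(-1)^i Q_{m-i}\,K_{\emp_i}^\perp(\sq_{\al'}),
\]
is false: up to the index slip ($Q_{m+i}$, not $Q_{m-i}$, per (\ref{e:nY})), the right-hand side is $\bY_{-m}(\sq_{\al'})$, and by the definition $\sq_\al=\bY_{-\al_1}\cdots\bY_{-\al_r}(1)$ this equals $\sq_{(m,\al_1,\dots,\al_{r-1})}$ --- the creation operator \emph{prepends} a part, it does not append one. Your attempted repair of the ordering at the end rests on the assertion that $\bY_{-\al_r}$ is ``the last operator applied, i.e.\ the outermost''; it is exactly the opposite: $\bY_{-\al_r}$ is the innermost operator (applied first, to $1$), and $\bY_{-\al_1}$ is the outermost. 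Consequently your derivation, if carried through correctly, yields a left-handed recursion expressing $\sq_{(m,\al')}$ as $\sum_i\sum_{|\be|=i}(-1)^{|\be|}2^{\ell(\be)}Q_{m+i}\,\sq_{\al'-\be}$ with the $Q$-factor on the \emph{left} and the new part in the \emph{first} position --- not the statement (\ref{rec}), which peels off the last part and has $Q_{\al_r-i}$ on the right. Since the $\sq_\al$ are not (anti)symmetric in their indices (they satisfy only the subtler relations (\ref{re})), there is no way to pass from the prepend recursion to the append recursion without additional argument, so this is a real gap, not a bookkeeping issue. (Your second step, the commutation of $K_{\emp_i}^\perp$ with the raising-operator factor, is essentially fine, but it does not rescue the first step.)

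The correct route is the one you only gesture at in your final sentence, and it is what the paper does in one line: in the raising-operator formula (\ref{ro}) (equivalently, in the product formula for $\bY(z_1)\cdots\bY(z_r)(1)$ before Prop.~\ref{ras}), isolate the factors involving the last index, $\prod_{i=1}^{r-1}\frac{1-R_{ir}}{1+R_{ir}}$, expand each as $1+2\sum_{k\ge1}(-1)^kR_{ir}^k$, and note that the remaining factor $\prod_{1\le i<j\le r-1}\frac{1-R_{ij}}{1+R_{ij}}$ acts only on the first $r-1$ slots, turning $Q_{\al'+\be}\,Q_{\al_r-i}$ into $\sq_{\al'+\be}\,Q_{\al_r-i}$ with the last factor untouched on the right. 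This immediately produces the coefficient $\prod_j(-1)^{\be_j}2^{1-\de_{\be_j,0}}$ and the correct noncommutative order in (\ref{rec}); no appeal to $K_{\emp_i}^\perp$ acting on $\sq_{\al'}$ is needed.
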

\begin{proof}
Applying the identity \[\prod_{i=1}^{r-1}\dfrac{1-R_{ir}}{1+R_{ir}}=\prod_{i=1}^{r-1}
\lb 1+2\sum_{k\geq1}(-1)^kR_{ir}^k\rb\]
to the right hand side of formula (\ref{ro}), we get the desired recursive relation.
\end{proof}

In order to find a concrete expansion of NSQF in $Q_\al$'s, we recall the following notation. For any anti-symmetric matrix $A=(a_{ij})_{1\leq i,j\leq 2n}$ over a commutative ring $R$, the \textit{Pfaffian} $\mb{Pf}(A)\in R$ is the unique square root of det$(A)$ up to a sign defined by
\[\mb{Pf}(A):=\sum_{\si\in\mathfrak{S}'_{2n}}(-1)^{\ell(\si)}a_{\si(1),\si(2)}\cdots
a_{\si(2n-1),\si(2n)},\]
where $\mathfrak{S}'_{2n}=\{\si\in\mathfrak{S}_{2n}:\si(2i-1)<\si(2i),\,1\leq i\leq n;\si(2j-1)<\si(2j+1),\,1\leq j\leq n-1\}$.

From Prop. \ref{ras}, we know that for any $\al=(\al_1,\dots,\al_{2n})\in\bZ^{2n}$,
$\sq_\al$ is the coefficient of $z^\al$ in
\[\begin{split}
&\,\Xi_{z_1,\dots,z_{2n}}\prod_{1\leq i<j\leq {2n}}\lb\dfrac{1-z_j/z_i}{1+z_j/z_i}\rb=
\Xi_{z_1,\dots,z_{2n}}
\mb{Pf}\lb\dfrac{z_i-z_j}{z_i+z_j}\rb\\
&=\sum_{\si\in\mathfrak{S}'_{2n}}(-1)^{\ell(\si)}
\lb\dfrac{z_{\si(1)}-z_{\si(2)}}
{z_{\si(1)}+z_{\si(2)}}\rb\cdots\lb\dfrac{z_{\si(2n-1)}-z_{\si(2n)}}
{z_{\si(2n-1)}+z_{\si(2n)}}\rb \sum_{\be\in\bN^{2n}}
z_1^{\be_1}\cdots z_{2n}^{\be_{2n}} Q_{\be_1}\cdots Q_{\be_{2n}}\\
&=\sum_{\be\in\bN^{2n}}\sum_{\si\in\mathfrak{S}'_{2n}}
\sum_{i_1,i_3\dots,i_{2n-1}\leq0\atop i_{2k-1}+i_{2k}=0}(-1)^{\ell(\si)+\sum_{k=1}^ni_{2k-1}}
2^{n-\sum_{k=1}^n\de_{i_{2k-1},0}}
z_{\si(1)}^{\be_{\si(1)}+i_1}\cdots z_{\si(2n)}^{\be_{\si(2n)}+i_{2n}}
Q_{\be_1}\cdots Q_{\be_{2n}}\\
&=\sum_{\be\in\bN^{2n}}\sum_{\si^{-1}\in\mathfrak{S}'_{2n}}
\lb\sum_{i_1,i_3\dots,i_{2n-1}\leq0\atop i_{2k-1}+i_{2k}=0}(-1)^{\ell(\si)+\sum_{k=1}^ni_{2k-1}}
2^{n-\sum_{k=1}^n\de_{i_{2k-1},0}}\rb
z_1^{\be_1+i_{\si(1)}}\cdots z_{2n}^{\be_{2n}+i_{\si(2n)}}
Q_{\be_1}\cdots Q_{\be_{2n}},
\end{split}\]
where the first equality for the Pfaffian can be found in \cite[Ch. III, \S 8, Ex. 5]{Mac}.
This proves the following nontrivial expansion of $\sq_\al$ in terms of $Q_{\be}$'s.

\begin{prop} \label{P:iter} For any $\al\in\mathbb Z^{2n}$, we have that
\begin{equation}
\sq_{\al}=\sum_{\si\in\mathfrak{S}'_{2n}}
\lb\sum_{i_1,i_3\dots,i_{2n-1}\geq0\atop i_{2k-1}+i_{2k}=0}(-1)^{\ell(\si)+\sum_{k=1}^ni_{2k-1}}
2^{n-\sum_{k=1}^n\de_{i_{2k-1},0}}\rb
Q_{\al_1+i_{\si^{-1}(1)}}\cdots Q_{\al_{2n}+i_{\si^{-1}(2n)}}.
\end{equation}
In particular $\sq_{(m, n)}=Q_mQ_n+2\sum_{i=1}^{n}(-1)^iQ_{m+i}Q_{n-i}$.
\end{prop}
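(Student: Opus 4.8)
The plan is to read the formula off the generating-function expression for $\sq_\al$ established just before the statement. By Proposition~\ref{ras} (with the auxiliary alphabet empty), $\sq_\al$ is the coefficient of $z_1^{\al_1}\cdots z_{2n}^{\al_{2n}}$ in $\Xi_{z_1,\dots,z_{2n}}\prod_{1\leq i<j\leq 2n}\tfrac{1-z_j/z_i}{1+z_j/z_i}$, where each factor with $i<j$ is expanded as a power series in $z_j/z_i$. The first move is to rewrite the antisymmetric product as a Pfaffian: by \cite[Ch. III, \S 8, Ex. 5]{Mac}, $\prod_{1\leq i<j\leq 2n}\tfrac{z_i-z_j}{z_i+z_j}=\mb{Pf}\lb\tfrac{z_i-z_j}{z_i+z_j}\rb$, so that $\sq_\al$ is the coefficient of $z_1^{\al_1}\cdots z_{2n}^{\al_{2n}}$ in $\Xi_{z_1,\dots,z_{2n}}\,\mb{Pf}\lb\tfrac{z_i-z_j}{z_i+z_j}\rb$.

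Next I would expand the Pfaffian entry by entry. For $a<b$ one has $\tfrac{z_a-z_b}{z_a+z_b}=\sum_{i\leq0}(-1)^i2^{1-\de_{i,0}}z_a^iz_b^{-i}$, so for each matching $\si\in\mathfrak{S}'_{2n}$ the product $\prod_{k=1}^n\tfrac{z_{\si(2k-1)}-z_{\si(2k)}}{z_{\si(2k-1)}+z_{\si(2k)}}$ contributes, for every choice $i_1,i_3,\dots,i_{2n-1}\leq0$ with $i_{2k}:=-i_{2k-1}$, the monomial $\prod_k z_{\si(2k-1)}^{i_{2k-1}}z_{\si(2k)}^{i_{2k}}$ with coefficient $(-1)^{\sum_k i_{2k-1}}2^{\,n-\sum_k\de_{i_{2k-1},0}}$; together with the global sign $(-1)^{\ell(\si)}$ this is the full expansion of $\mb{Pf}$. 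Multiplying by $\Xi_{z_1,\dots,z_{2n}}=\sum_{\be\in\bN^{2n}}z_1^{\be_1}\cdots z_{2n}^{\be_{2n}}Q_{\be_1}\cdots Q_{\be_{2n}}$ and extracting the coefficient of $z_1^{\al_1}\cdots z_{2n}^{\al_{2n}}$: for fixed $\si$ the exponent of $z_j$ coming from the Pfaffian part is $i_{\si^{-1}(j)}$, so one is forced to take $\be_j=\al_j-i_{\si^{-1}(j)}$. A final change of variables $i_{2k-1}\mapsto-i_{2k-1}$ (which alters neither $(-1)^{\sum_k i_{2k-1}}$ nor $\sum_k\de_{i_{2k-1},0}$) turns the inner sum into one over $i_1,i_3,\dots,i_{2n-1}\geq0$ and yields exactly the stated formula.

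For $n=1$ this collapses: $\mathfrak{S}'_2=\{\mb{id}\}$ with $\ell(\mb{id})=0$, the inner sum runs over the single index $i_1=i\geq0$ (so $i_2=-i$), and one gets $\sq_{(m,n)}=\sum_{i\geq0}(-1)^i2^{1-\de_{i,0}}Q_{m+i}Q_{n-i}=Q_mQ_n+2\sum_{i\geq1}(-1)^iQ_{m+i}Q_{n-i}$; since $Q_k=0$ for $k<0$, the last sum is supported on $1\leq i\leq n$.

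The step I expect to be the main obstacle is the coefficient extraction: one must keep straight which of the two variables in each Pfaffian entry is the "dominant" one (this pins the sign convention $i_{2k-1}\leq0$) and then carry all $2n$ exponents correctly through the relabelling $j\mapsto\si^{-1}(j)$; a sign or an index placed on the wrong side of a pair is the likely pitfall. It also pays to fix at the outset the completion of $\mathcal{F}[z_1^{\pm1},\dots,z_{2n}^{\pm1}]$ (iterated Laurent series with $|z_1|\gg\cdots\gg|z_{2n}|$) in which all the geometric expansions live, so that every rearrangement above is legitimate; once that is set, the identity is purely formal. An inductive alternative---expanding the Pfaffian along its last row and matching the recursion~(\ref{rec})---is available but looks longer than the direct computation above.
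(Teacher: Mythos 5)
Your proposal is correct and follows essentially the same route as the paper: it reads $\sq_\al$ off the generating function from Proposition~\ref{ras} with $X=\emp$, rewrites the antisymmetric product as $\mb{Pf}\lb\tfrac{z_i-z_j}{z_i+z_j}\rb$ via the same exercise in Macdonald, expands each Pfaffian entry geometrically, and extracts the coefficient of $z_1^{\al_1}\cdots z_{2n}^{\al_{2n}}$, with the sign flip $i_{2k-1}\mapsto -i_{2k-1}$ matching the paper's bookkeeping. Your explicit remarks on the Laurent-expansion regime and the $n=1$ specialization are consistent with (and slightly more careful than) the paper's own computation.
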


Next we give the following key relations for NSQF.
\begin{prop}
For any $\al\in\mc$ and $n\geq2$,
\beq\label{re}\sum_{i=1}^{n-1}\sq_{\al,i,n-i}=0.\eeq
\end{prop}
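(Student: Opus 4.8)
The plan is to prove the relation $\sum_{i=1}^{n-1}\sq_{\al,i,n-i}=0$ by working at the level of the generating function $\bY(z)$ rather than with the explicit $Q_\be$-expansion. The crucial observation is that the last two factors $\bY_{-i}\bY_{-(n-i)}$ in $\sq_{\al,i,n-i}=\bY_{-\al}\bY_{-i}\bY_{-(n-i)}(1)$ depend only on the operators $\bY$, so it suffices to establish the operator identity $\sum_{i+j=n}\bY_i\bY_j=0$ for $n\le -2$ (equivalently $\sum_{i=1}^{n-1}\bY_{-i}\bY_{-(n-i)}=0$ for $n\ge 2$), since then applying $\bY_{-\al}$ on the left and evaluating at $1$ gives (\ref{re}) at once. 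In fact it is enough to prove that the "negative tail" of the formal series $\bY(z)\bY(w)$ vanishes in a suitable sense; I would extract it from the commutation relation for $\bY(z)$.

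The first step is therefore to compute $\bY(z)\bY(w)$ using the Cauchy-kernel machinery already set up. From $\bY(z)=\Xi_z\,\mK_{-1/z}^\perp$ and Lemma \ref{ck}(2) together with the identity $\mK_{-1/z}^\perp\Xi_w=\Xi_w\frac{1-w/z}{1+w/z}$, one gets
\[
\bY(z)\bY(w)=\Xi_{z,w}\,\frac{1-w/z}{1+w/z}\,\mK_{-1/z}^\perp\mK_{-1/w}^\perp,
\]
and by symmetry $\bY(w)\bY(z)=\Xi_{z,w}\frac{1-z/w}{1+z/w}\,\mK_{-1/w}^\perp\mK_{-1/z}^\perp$. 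Since the $\mK^\perp$ operators commute (they are built from the commuting adjoints $K_{\emp_i}^\perp$, as $\Om$ is commutative), this yields the vertex-type relation
\[
(z+w)\,\bY(z)\bY(w)+(z+w)\,\bY(w)\bY(z)=0
\]
after clearing denominators — i.e. $\bY(z)\bY(w)$ is antisymmetric up to the factor $(z-w)/(z+w)$, exactly paralleling the relation $\{Y_n,Y_m\}=(-1)^n2\de_{n,-m}$ used in the proof of the preceding lemma (and, indeed, $\pi$ sends this identity to that one). Expanding $(z+w)(\bY(z)\bY(w)+\bY(w)\bY(z))=0$ in powers of $z,w$ and collecting the coefficient of a monomial $z^{-p}w^{-q}$ with $p+q=n-1$ will produce the anticommutator relations $\bY_i\bY_j+\bY_{i-1}\bY_{j+1}+(\text{symmetric terms})=0$; telescoping these across $i+j=n$ gives $\sum_{i+j=n}\bY_i\bY_j=0$ for $n\ne 0$, and in particular for $n\le -2$, which is what we need.

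The main obstacle is bookkeeping of the formal-series manipulation: $\bY(z)=\sum_{n\in\bZ}\bY_nz^{-n}$ is a two-sided series, $\Xi_{z,w}$ has only nonnegative powers, and the rational factor $\frac{1-w/z}{1+w/z}=1+2\sum_{k\ge1}(-1)^k(w/z)^k$ must be expanded in the region $|w|<|z|$ — so one has to be careful that the products are well-defined when applied to the basis $Q_\al$, $\al\vDash_{\us{odd}}$, exactly the finiteness issue already flagged after (\ref{e:nY}) via Lemma \ref{kq}. I would handle this by checking that for fixed $\al$ each $\bY_{-i}\bY_{-(n-i)}Q_\al$ is a finite sum (using Lemma \ref{kq} to see $K_{\emp_i}^\perp$ lowers degree), so that $\sum_{i=1}^{n-1}\bY_{-i}\bY_{-(n-i)}$ is a bona fide operator and the series identity can legitimately be read off coefficientwise. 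An alternative, more hands-on route — should the generating-function argument feel too slick to present cleanly — is to use the explicit formula in Prop. \ref{P:iter} for $\sq_{(m,k,l)}$-type pieces, or the recursion (\ref{rec}), and verify the cancellation directly; but I expect the $\bY(z)\bY(w)$ computation to be both shorter and more illuminating, since it exhibits (\ref{re}) as the genuine noncommutative shadow of the classical relation $\sum_{i=1}^{n-1}q_iq_{n-i}=\text{(the Euler relation)}$ under $\pi$.
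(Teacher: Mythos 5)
Your reduction to the two--row case agrees with the paper's (since $\sq_{\al,i,n-i}=\bY_{-\al}(\sq_{i,n-i})$, linearity of $\bY_{-\al}$ already reduces (\ref{re}) to $\sum_{i=1}^{n-1}\sq_{i,n-i}=0$, so no operator identity is needed), and your formula $\bY(z)\bY(w)=\Xi_{z,w}\,\tfrac{1-w/z}{1+w/z}\,\mK_{-1/z}^\perp\mK_{-1/w}^\perp$ is correct. The fatal step is the ``by symmetry'' claim: exchanging $z$ and $w$ produces $\bY(w)\bY(z)=\Xi_{w,z}\,\tfrac{1-z/w}{1+z/w}\,\mK_{-1/w}^\perp\mK_{-1/z}^\perp$ with the kernel $\Xi_{w,z}$, and $\Xi_{w,z}\neq\Xi_{z,w}$ because the $Q_n$'s do not commute (the coefficient of $z^aw^b$ is $Q_aQ_b$ in one and $Q_bQ_a$ in the other). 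Hence the two expressions do not combine, and the asserted relation $(z+w)\lb\bY(z)\bY(w)+\bY(w)\bY(z)\rb=0$ is false in $\mP$. Concretely, its coefficient of $z^2w^3$, applied to $1$, would force $\sq_{13}+2\sq_{2^2}+\sq_{31}=0$; but by Prop.~\ref{P:iter} and the Euler relation (\ref{np}) one computes $\sq_{13}+2\sq_{2^2}+\sq_{31}=Q_1Q_3-Q_3Q_1$, which is nonzero since $Q_{(1,3)}$ and $Q_{(3,1)}$ are distinct members of the basis $\{Q_\al\}_{\al\vDash_{\ti{odd}}4}$ of $\mP_4$ (equivalently, the true relation for $n=4$ is $\sq_{31}+\sq_{2^2}+\sq_{13}=0$, so your identity would force $\sq_{2^2}=0$). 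The anticommutation relation $\{Y_n,Y_m\}=(-1)^n2\de_{n,-m}$ survives only after applying $\pi$; in $\mP$ the NSQF satisfy no such antisymmetry --- this is exactly why (\ref{re}) involves \emph{all} splittings of $n$ rather than pairs --- so your derivation of the operator identity $\sum_{i+j=n}\bY_i\bY_j=0$ (a statement stronger than needed, and nowhere established) collapses, and the subsequent telescoping is moot.

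The route that does work is the one you mention only as a fallback and do not carry out, and it is the paper's actual proof: expand $\sq_{i,n-i}=Q_iQ_{n-i}+2\sum_{j\geq1}(-1)^jQ_{i+j}Q_{n-i-j}$ by the recursion (\ref{rec}) (equivalently Prop.~\ref{P:iter}), sum over $i=1,\dots,n-1$, and reindex; the double sum collapses to $\sum_{k=0}^n(-1)^{k-1}Q_kQ_{n-k}$, which vanishes by (\ref{np}). Your closing intuition that (\ref{re}) is the noncommutative shadow of the Euler relation is right (the paper notes (\ref{ns}) is equivalent to (\ref{np})), but since your primary argument rests on a false identity and the valid alternative is only gestured at, the proposal as written has a genuine gap.
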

\begin{proof}
By the definition of $\sq_\al$, we only need to show that
\beq\label{ns}\sum_{i=1}^{n-1}\sq_{i,n-i}=0,\,n\geq2,\eeq
as $\sq_{\al,i,n-i}=\bY_{-\al}(\sq_{i,n-i})$.

Indeed using the recursive formula (\ref{rec}) (or by Prop. \ref{P:iter}), we have
\[\begin{split}
\sum_{i=1}^{n-1}\sq_{i,n-i}&=\sum_{i=1}^{n-1}
\lb Q_{i,n-i}+2\sum_{j=1}^{n-i}(-1)^jQ_{i+j,n-i-j}\rb
=\sum_{i=1}^{n-1}
Q_{i,n-i}+2\sum_{i=1}^{n-1}\sum_{k=i+1}^n(-1)^{k-i}Q_{k,n-k}\\
&=\sum_{i=1}^{n-1}
Q_{i,n-i}+2\sum_{k=2}^n\lb\sum_{i=1}^{k-1}(-1)^{k-i}\rb Q_{k,n-k}
=\sum_{k=0}^n(-1)^{k-1}Q_{k,n-k}=0.
\end{split}\]
\end{proof}
From the proof above, we know that relation (\ref{ns}) is equivalent to the Euler relation (\ref{np}). In particular, we know that those $\sq_\al$'s, with $\al$ ranging over strict compositions, are not linearly independent by letting $n$ to be odd in (\ref{ns}). Instead, we will find another index set for a natural basis of $\mP$ from NSQF later.

\subsection{The right-Pieri rule for NSQF}
Next we relate the operators $\bY_{-m}\,(m\in\bZ)$ with those $Q_s\,(s\geq1)$ to obtain a right-Pieri rule for NSQF.
\begin{lem}
For $s\geq1$ and $m\in\bZ$, and for $F\in\mP$,
\beq\label{yq}\bY_{-m}(F)Q_s=\bY_{-m}(FQ_s)+2\sum_{i=0}^{s-1}\bY_{-m-s+i}(FQ_i).\eeq
\end{lem}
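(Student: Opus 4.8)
The plan is to work at the level of generating functions, rewriting both sides in terms of the power series $\bY(z)$, $\Xi_z$, and $\mK_z^\perp$ introduced above, and then matching coefficients. Recall that $\bY(z) = \Xi_z \mK_{-1/z}^\perp$ and that $Q_s$ is the coefficient of $w^s$ in $\Xi_w$, so the identity (\ref{yq}) should follow from a single operator identity relating $\Xi_z \mK_{-1/z}^\perp$ and right multiplication by $\Xi_w$. The key computational input is the commutation of $\mK_{-1/z}^\perp$ past a right multiplication operator. Since $\mK_z^\perp$ is the adjoint of multiplication by the group-like series $\sum_i z^i K_{\emp_i}$, Lemma \ref{ck}(2) (together with the Hopf-pairing rule $f^\perp(GH) = \sum f_{(1)}^\perp(G)\,f_{(2)}^\perp(H)$ used in Lemma \ref{kq}) gives that for $F \in \mP$,
\[
\mK_{-1/z}^\perp(F\,\Xi_w) = \mK_{-1/z}^\perp(F)\cdot \Xi_w \cdot \frac{1 + w/z}{1 - w/z},
\]
because $\De(K_{\emp_i})$ is the sum over $K_{\emp_j}\ot K_{\emp_{i-j}}$ and $\sum_i (-1/z)^i K_{\emp_i}(w) = \frac{1-w/z}{1+w/z}$ evaluated with the sign built into $\mK_{-1/z}^\perp$; one must track the signs carefully here, but the scalar factor that pops out is a rational function in $w/z$.

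Next I would translate this into a statement about $\bY$. Multiplying $F$ on the right by $\Xi_w$ and then applying $\Xi_z \mK_{-1/z}^\perp$ from the left, Lemma \ref{ck}(1) ($\Xi_{z}\Xi_{w}$ versus $\Xi_{w}\Xi_{z}$ — here the relevant factorization is $\Xi_z$ commuting with the $x$-part) yields
\[
\bY(z)\bigl(F\,\Xi_w\bigr) = \bY(z)(F)\cdot \Xi_w \cdot \frac{1+w/z}{1-w/z}.
\]
Now I extract the coefficient of $z^{-(-m)} = z^{m}$ on both sides, and on the right the coefficient of $w^s$. On the left this produces $\bY_{-m}(F Q_s)$ after reading off $w^s$ from $\Xi_w$. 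On the right, the factor $\frac{1+w/z}{1-w/z} = 1 + 2\sum_{k\ge 1}(w/z)^k$ multiplies $\bY(z)(F)\cdot\Xi_w$; collecting the coefficient of $z^m w^s$ gives exactly $\bY_{-m}(F)Q_s + 2\sum_{k\ge 1} \bY_{-m-k}(F) Q_{s-k}$, i.e. $\bY_{-m}(F)Q_s + 2\sum_{i=0}^{s-1}\bY_{-m-s+i}(F Q_i)$ after reindexing $i = s-k$. Rearranging gives (\ref{yq}).

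Alternatively — and this may be the cleaner write-up — one can avoid generating functions entirely and argue directly from the already-established identity $\mK_z^\perp \Xi_X = \Xi_X \prod_{x\in X}\frac{1+zx}{1-zx}$ plus Lemma \ref{kq}, which expresses $K_{\emp_n}^\perp(Q_\al)$ explicitly; substituting the definition $\bY_{-m} = \sum_{i\ge 0}(-1)^i Q_{m+i} K_{\emp_i}^\perp$ and using $Q_i Q_s = \sum(\cdots)$ reduces (\ref{yq}) to a finite combinatorial identity about the coefficients $2^{1-\de_{i,0}}$, which telescopes in the same way the sums in the proof of (\ref{re}) did. The main obstacle in either route is bookkeeping: getting the sign $(-1)^i$ in the definition of $\bY_{-m}$ to interact correctly with the $(-1/z)$ in $\mK_{-1/z}^\perp$ and with the geometric-series expansion of $\frac{1+w/z}{1-w/z}$, so that the factor of $2$ and the shift $-m-s+i$ in the second term come out exactly as stated rather than off by one. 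I expect no conceptual difficulty beyond that, since all the structural lemmas (the coproduct of $K_{\emp_n}$, the $\Xi_X$ intertwining property, and the $\bY = \Xi_z \mK_{-1/z}^\perp$ factorization) are already in hand.
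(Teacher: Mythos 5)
Your overall strategy---rewriting everything through $\bY(z)=\Xi_z\mK_{-1/z}^\perp$ and commuting $\mK_{-1/z}^\perp$ past right multiplication by $\Xi_w$---is sound, and executed correctly it would in fact give a cleaner, one-step proof than the paper's (the paper first derives the alternating-sign identity $\bY_{-m}(F)Q_s=\bY_{-m}(FQ_s)+2\sum_{j\geq1}(-1)^{j-1}\bY_{-m-j}(F)Q_{s-j}$ by a direct computation with Lemma \ref{kq}, and then needs an induction on $s$ to turn it into (\ref{yq})). But as written your argument has a genuine gap, in two places. First, the scalar factor in your key display is inverted: Lemma \ref{ck}(2) in the single variable $w$ gives $\mK_{-1/z}^\perp(\Xi_w)=\frac{1-w/z}{1+w/z}\,\Xi_w$ (exactly the series you compute in your own parenthetical), so the correct commutation is
\[\bY(z)(F\,\Xi_w)=\frac{1-w/z}{1+w/z}\,\bY(z)(F)\,\Xi_w,\]
not the version with $\frac{1+w/z}{1-w/z}$ that you display and then expand.

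Second, and more seriously, in the coefficient extraction you pass from $\bY_{-m}(F)Q_s+2\sum_{k\geq1}\bY_{-m-k}(F)\,Q_{s-k}$ to $\bY_{-m}(F)Q_s+2\sum_{i=0}^{s-1}\bY_{-m-s+i}(F\,Q_i)$ ``after reindexing''. These are different expressions: $\bY_n(F)Q_j\neq\bY_n(FQ_j)$ in general, and the passage between ``$Q$ outside'' and ``$Q$ inside'' is precisely the content of the lemma, so this step begs the question (note also that, taken literally, your extracted identity rearranges to (\ref{yq}) with a minus sign on the sum, so even the compensating errors do not close the bookkeeping). The fix within your framework is easy: move the scalar factor to the other side and write $\bY(z)(F)\,\Xi_w=\frac{1+w/z}{1-w/z}\,\bY(z)(F\,\Xi_w)$. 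Since $\bY(z)(F\Xi_w)=\sum_{n,j}\bY_n(FQ_j)z^{-n}w^j$ already carries the $Q_j$ inside the operator, the coefficient of $z^mw^s$ on the right is $\bY_{-m}(FQ_s)+2\sum_{k=1}^{s}\bY_{-m-k}(FQ_{s-k})$, and comparing with $\bY_{-m}(F)Q_s$ on the left gives (\ref{yq}) at once. If instead you only correct the sign but keep the factor on your side, you recover exactly the paper's alternating-sign identity and still owe its induction on $s$; your alternative ``telescoping as in (\ref{re})'' paragraph gestures at that missing step but does not carry it out.
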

\begin{proof}
We first prove that
\[\bY_{-m}(F)Q_s=\bY_{-m}(FQ_s)+2\sum_{j\geq1}(-1)^{j-1}\bY_{-m-j}(F)Q_{s-j}.\]
By definition,
\[\begin{split}
\bY_{-m}(FQ_s)&=\sum_{i\geq0}(-1)^iQ_{m+i}K_{\emp_i}^\perp(FQ_s)
=\sum_{i\geq0}(-1)^iQ_{m+i}\sum_{j=0}^iK_{\emp_{i-j}}^\perp(F)
K_{\emp_j}^\perp(Q_s)\\
&=\sum_{j\geq0}\sum_{k\geq 0}(-1)^{j+k}Q_{m+j+k}K_{\emp_k}^\perp(F)
K_{\emp_j}^\perp(Q_s)\\
&=\sum_{j\geq0}(-1)^j\lb\sum_{k\geq 0}(-1)^kQ_{m+j+k}K_{\emp_k}^\perp(F)\rb
K_{\emp_j}^\perp(Q_s)\\
&=\bY_{-m}(F)Q_s+2\sum_{j\geq1}(-1)^j\bY_{-m-j}(F)Q_{s-j},
\end{split}\]
where we use Lemma \ref{kq} to get the equalities.

Now we prove (\ref{yq}) by induction on $s$: the above formula clarifies the case when $s=1$. Furthermore,
\[\begin{split}
\bY_{-m}(F)Q_s&=\bY_{-m}(FQ_s)+2\sum_{j=1}^s(-1)^{j-1}\bY_{-m-j}(F)Q_{s-j}\\
&=\bY_{-m}(FQ_s)+2\sum_{j=1}^s(-1)^{j-1}
\lb\bY_{-m-j}(FQ_{s-j})+2\sum_{k=0}^{s-j-1}\bY_{-m-s+k}(FQ_k)\rb\\
&=\bY_{-m}(FQ_s)+2\sum_{j=0}^{s-1}(-1)^{s-1-j}
\bY_{-m-s+j}(FQ_j)+4\sum_{k=0}^{s-2}
\lb\sum_{j=1}^{s-k-1}(-1)^{j-1}\rb\bY_{-m-s+k}(FQ_k)\\
&=\bY_{-m}(FQ_s)+2\sum_{j=0}^{s-1}(-1)^{s-1-j}\bY_{-m-s+j}(FQ_j)
+2\sum_{k=0}^{s-2}\lb1-(-1)^{s-k-1}\rb\bY_{-m-s+k}(FQ_k)\\
&=\bY_{-m}(FQ_s)+2\sum_{j=0}^{s-1}\bY_{-m-s+j}(FQ_j).
\end{split}
\]
\end{proof}

In order to describe our Pieri rule, we need to introduce the following notation: for any compositions $\al,\be$, we write $\al\subset_s\be,\,s\in\bN$ if

(1)\quad $|\be|=|\al|+s$,

(2)\quad $\al_i\leq\be_i$ for all $1\leq i\leq \ell(\al)$,

(3)\quad $\ell(\be)\leq\ell(\al)+1$.

\medskip
By the formula (\ref{yq}) and the definition of $\sq_\al$'s, it is easy to deduce that
\begin{theorem}
For any composition $\al$, $\sq_\al$ satisfies the following right Pieri rule for multiplication by $Q_s$:
\beq\label{pie}\sq_\al Q_s=\sum_{\al\subset_s\be}2^{\ell(\be'-\al)}\sq_\be,\eeq
where $\be'=(\be_1,\dots,\be_{\ell(\al)})$ such that $\be'-\al\in\bN_0^{\ell(\al)}$.
\end{theorem}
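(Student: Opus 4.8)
The plan is to derive the right-Pieri rule (\ref{pie}) directly from the operator identity (\ref{yq}) by peeling off the last letter of $\al$ and inducting on $\ell(\al)$. Write $\al=(\al',\al_r)$ with $\al'=(\al_1,\dots,\al_{r-1})$, so that $\sq_\al=\bY_{-\al_r}(\sq_{\al'})$. Apply (\ref{yq}) with $F=\sq_{\al'}$ and $m=\al_r$ to get
\[
\sq_\al Q_s=\bY_{-\al_r}(\sq_{\al'}Q_s)+2\sum_{i=0}^{s-1}\bY_{-\al_r-s+i}(\sq_{\al'}Q_i).
\]
The first term is handled by the inductive hypothesis applied to $\sq_{\al'}Q_s$ (a shorter composition), followed by reapplying $\bY_{-\al_r}$ to each resulting NSQF, which simply prepends $\al_r$ as a new last part; the terms in the sum are handled by the inductive hypothesis applied to $\sq_{\al'}Q_i$ and then applying $\bY_{-\al_r-s+i}$, which prepends a last part of size $\al_r+s-i$. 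The base case $\ell(\al)=0$, i.e. $\sq_\emptyset Q_s=Q_s=\sq_{(s)}$, is immediate since the only $\be$ with $\emptyset\subset_s\be$ is $(s)$ and the exponent $\ell(\be'-\al)$ is $\ell(\emptyset)=0$.

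The combinatorial heart of the argument is to check that the three contributions above reassemble exactly into $\sum_{\al\subset_s\be}2^{\ell(\be'-\al)}\sq_\be$. I would organize the bookkeeping by the value of $\be_r$ (the last part of the output composition) relative to $\al_r$. If $\be_r=\al_r$, the term comes only from $\bY_{-\al_r}(\sq_{\al'}Q_s)$, i.e. from distributing all $s$ boxes among the first $r-1$ parts and possibly a new part of the same length as $\al$; here $\be'-\al$ concerns only the first $r-1$ coordinates and the exponent matches the inductive one. If $\be_r>\al_r$, say $\be_r=\al_r+(s-i)$ for some $0\le i\le s-1$, the term comes from the $\bY_{-\al_r-s+i}(\sq_{\al'}Q_i)$ summand, where $i$ boxes are distributed into $\al'$ and the remaining $s-i$ boxes grow the last part; since the last part strictly grows, $\ell((\be')-\al)=\ell(\text{first }r-1\text{ coords of }\be-\al)+1$, and the extra factor of $2$ in front of the sum in (\ref{yq}) accounts for the $+1$ in the exponent. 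The case $\ell(\be)=\ell(\al)+1$ (a genuinely new part) is absorbed into the above by treating "$\be_r$" as either the $r$-th part of $\be$ when $\ell(\be)=r$, or the newly created $(r{+}1)$-st part; one must verify that the condition $\al\subset_s\be$, which allows at most one new part, is consistent with exactly one new last part being created at this outermost step and none of the inductive steps.

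The main obstacle I expect is purely combinatorial rather than algebraic: making the induction close cleanly requires that the $\subset_s$ relation ``telescopes'' correctly, namely that every $\be$ with $\al\subset_s\be$ factors \emph{uniquely} as $\al'\subset_i\be'$ (for the first $r-1$ parts, with $i=|\be'|-|\al'|$) together with a prescribed behavior of the last part, and that the power-of-two weights multiply correctly under this factorization. There is a subtlety with the length condition (3): when $\ell(\al')=r-1$ and the inductive step already creates a new $r$-th part, applying $\bY_{-\al_r}$ must \emph{not} create yet another part, so one has to check that in that branch the outer $\bY$ only ever increases an existing last part or sits at index $r$. I would resolve this by being careful about what ``$\be'$'' means in each branch and by verifying the weight identity $2^{\ell(\be'-\al)}=2^{\ell((\text{trunc }\be')-\al')}\cdot 2^{[\be_r>\al_r\text{ or }\ell(\be)>\ell(\al)]}$ term by term, which is exactly the structure produced by (\ref{yq}). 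Once this dictionary is set up, the verification is a finite check of matching coefficients and the theorem follows.
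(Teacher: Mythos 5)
The key identity on which you build the whole induction is false: from the definition $\sq_\al=\bY_{-\al}(1)=\bY_{-\al_1}\cdots\bY_{-\al_r}(1)$, the creation operator attached to the \emph{first} part is the outermost one, so the correct peeling is $\sq_\al=\bY_{-\al_1}\bigl(\sq_{(\al_2,\dots,\al_r)}\bigr)$, not $\sq_\al=\bY_{-\al_r}\bigl(\sq_{(\al_1,\dots,\al_{r-1})}\bigr)$. Applying $\bY_{-\al_r}$ to $\sq_{\al'}$ produces $\sq_{(\al_r,\al_1,\dots,\al_{r-1})}$, and the NSQF are genuinely not symmetric in their indices (e.g.\ by Proposition \ref{P:iter}, $\sq_{(2,3)}=Q_2Q_3-2Q_3Q_2+2Q_4Q_1-2Q_5$ while $\sq_{(3,2)}=Q_3Q_2-2Q_4Q_1+2Q_5$), so your displayed identity
\[
\sq_\al Q_s=\bY_{-\al_r}(\sq_{\al'}Q_s)+2\sum_{i=0}^{s-1}\bY_{-\al_r-s+i}(\sq_{\al'}Q_i)
\]
is an instance of (\ref{yq}) for $\sq_{(\al_r,\al_1,\dots,\al_{r-1})}Q_s$, not for $\sq_\al Q_s$. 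Consequently the entire bookkeeping in your second and third paragraphs is oriented the wrong way: the outermost operator prepends a new \emph{first} part, it never creates or enlarges the last part, so organizing the case analysis by the value of $\be_r$ versus $\al_r$ and claiming that the new last part is "created at the outermost step" cannot be made to close.

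The repair is exactly the paper's argument, which is otherwise in the same spirit as yours: induct on $\ell(\al)$, write $\sq_\al=\bY_{-m}(\sq_\ga)$ with $m=\al_1$ and $\ga=(\al_2,\dots,\al_r)$, apply (\ref{yq}) with $F=\sq_\ga$, and expand $\sq_\ga Q_s$ and $\sq_\ga Q_i$ by the induction hypothesis. Then the terms are sorted by the \emph{first} part of $\be$: if $\be_1=\al_1$ the term comes from $\bY_{-m}(\sq_\ga Q_s)$ with weight $2^{\ell(\eta'-\ga)}=2^{\ell(\be'-\al)}$, while if $\be_1=\al_1+(s-i)>\al_1$ it comes from $\bY_{-m-s+i}(\sq_\ga Q_i)$, and the prefactor $2$ in (\ref{yq}) supplies exactly the extra factor needed because the first coordinate now contributes $1$ to $\ell(\be'-\al)$; any new part at the end of $\be$ (the case $\ell(\be)=\ell(\al)+1$) is produced only inside the induction hypothesis, which is consistent with condition (3) of $\al\subset_s\be$. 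Your base case $\sq_\emptyset Q_s=Q_s=\sq_{(s)}$ is fine (the paper instead starts at $\ell(\al)=1$ by taking $F=1$ in (\ref{yq})), but as written the inductive step does not prove the stated theorem.
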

\begin{proof}
We prove the formula by induction on $\ell(\al)$. If $\ell(\al)=1$, it can be obtained by letting $F=1$ in (\ref{yq}). If $\ell(\al)>1$, set
$\sq_\al=\bY_{-m}(\sq_{\ga})$, then by (\ref{yq}) and induction hypothesis,
we have
\[\begin{split}
\sq_\al Q_s&=\bY_{-m}(\sq_\ga Q_s)
+2\sum_{i=0}^{s-1}\bY_{-m-s+i}(\sq_\ga Q_i)\\
&=\bY_{-m}\lb\sum_{\ga\subset_s\eta}2^{\ell(\eta'-\ga)}\sq_\eta\rb
+2\sum_{i=0}^{s-1}\bY_{-m-s+i}\lb\sum_{\ga\subset_i\eta}2^{\ell(\eta'-\ga)}\sq_\eta\rb\\
&=\sum_{\ga\subset_s\eta}2^{\ell(\eta'-\ga)}\sq_{m,\,\eta}
+\sum_{i=0}^{s-1}\sum_{\ga\subset_i\eta}2^{\ell(\eta'-\ga)+1}\sq_{m+s-i,\,\eta}
=\sum_{\al\subset_s\be}2^{\ell(\be'-\al)}\sq_\be.
\end{split}\]
\end{proof}


\begin{rem}
Applying the forgetful map $\pi$ to (\ref{pie}), one gets the Pieri rule of Schur's Q-functions \cite[Ch. III, (8.15)]{Mac}:
\beq\label{cpi}\mS_\mu q_s=\sum_\la 2^{b(\la/\mu)}\mS_\la\eeq
for any strict partition $\mu$, where the sum is over all strict $\la\supset\mu$ such that $\la/\mu$ is an $s$-horizontal strip, and $b(\la/\mu)$ is the number of $i\geq1$ such that $\la/\mu$ has a box in the $(i+1)$th column but not in the $i$th one.

Indeed, the special case for $\al$ with one part can illustrate the point. Assuming that $\al=(r),\,r\in\bN$, we need to consider two kinds of $\be$ with $\al\subset_s\be$:

(1) $(r+s)$;\quad (2) $(r+i,s-i),\,i=0,\dots,s-1$.

Case (1) gives the right coefficient 2. For (2) when $s-i<r$ gives the right coefficient $2-\de_{i,0}$. Now for (2) with $s-i\geq r$, we have the pairs
$(r+i,s-i)$ and $(s-i,r+i)$. When $i=0$ and $s>r$, then $\sq_{(r,s)}$ has coefficient $1$ while $\sq_{(s,r)}$ has $2$, thus gives $\mS_{(s,r)}$ the right $1$. When $i>0$ and $s-i>r$, then both $\sq_{(r+i,s-i)}$ and $\sq_{(s-i,r+i)}$ have coefficient $2$, they cancel with each other when projecting to the commutative case.

\end{rem}

\begin{cor}
For any composition $\al=(\al_1,\dots,\al_r)$, $Q_\al$ has the following expansion in terms of the NSQF:
\beq\label{exp} Q_\al=\sum_{\be^{(1)}=(\al_1)\subset_{\al_2}
\cdots\subset_{\al_r}\be^{(r)}=\be}
2^{\sum_{i=2}^r\ell({\be^{(i)}}'-\be^{(i-1)})}\sq_\be,\eeq
where ${\be^{(i)}}'=(\be^{(i)}_1,\dots,\be^{(i)}_{\ell(\be^{(i-1)})})$, a truncation of $\be^{(i)}$.
\end{cor}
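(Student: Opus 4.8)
The statement is the iterated Pieri rule, obtained by repeatedly expanding $Q_{\al_1}Q_{\al_2}\cdots Q_{\al_r}$ via the single-step formula (\ref{pie}). The natural approach is induction on $r=\ell(\al)$. For $r=1$ the claim reads $Q_{(\al_1)}=\sq_{(\al_1)}$, which is immediate since $\sq_{(\al_1)}=\bY_{-\al_1}(1)$ and $\bY_{-\al_1}(1)=Q_{\al_1}$ by Proposition \ref{ras} (the product over $1\le i<j\le 1$ is empty). For the inductive step, write $Q_\al=Q_{(\al_1,\dots,\al_{r-1})}Q_{\al_r}$, apply the induction hypothesis to $Q_{(\al_1,\dots,\al_{r-1})}$ to expand it as $\sum 2^{\,\sum_{i=2}^{r-1}\ell({\be^{(i)}}'-\be^{(i-1)})}\sq_{\be^{(r-1)}}$ over chains $\be^{(1)}=(\al_1)\subset_{\al_2}\cdots\subset_{\al_{r-1}}\be^{(r-1)}$, and then multiply each $\sq_{\be^{(r-1)}}$ on the right by $Q_{\al_r}$ using Theorem \ref{pie}.

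The key step is then simply to observe that Theorem \ref{pie} contributes, for each fixed $\be^{(r-1)}$, a sum over $\be^{(r)}$ with $\be^{(r-1)}\subset_{\al_r}\be^{(r)}$ with coefficient $2^{\ell({\be^{(r)}}'-\be^{(r-1)})}$, where ${\be^{(r)}}'=(\be^{(r)}_1,\dots,\be^{(r)}_{\ell(\be^{(r-1)})})$. Combining the two coefficients, the monomial $\sq_{\be^{(r)}}$ acquires the factor $2^{\sum_{i=2}^{r-1}\ell({\be^{(i)}}'-\be^{(i-1)})}\cdot 2^{\ell({\be^{(r)}}'-\be^{(r-1)})}=2^{\sum_{i=2}^{r}\ell({\be^{(i)}}'-\be^{(i-1)})}$, which is exactly the exponent appearing in (\ref{exp}). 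Summing over all completions of the chain gives the stated formula. One should note that the truncation conventions match up: in the induction hypothesis ${\be^{(i)}}'$ is the length-$\ell(\be^{(i-1)})$ truncation of $\be^{(i)}$ for $i\le r-1$, and the last application of Theorem \ref{pie} supplies precisely the $i=r$ term with ${\be^{(r)}}'$ the length-$\ell(\be^{(r-1)})$ truncation, so no adjustment of earlier terms is needed (the earlier chain data are untouched by the final multiplication).

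The main—and essentially only—subtlety is bookkeeping: one must make sure that the partial chains produced by the induction hypothesis are genuinely in bijection with the initial segments $\be^{(1)}\subset_{\al_2}\cdots\subset_{\al_{r-1}}\be^{(r-1)}$ of full chains, and that summing over $\be^{(r)}$ for each such partial chain reassembles the full sum over chains $\be^{(1)}\subset_{\al_2}\cdots\subset_{\al_r}\be^{(r)}$ without double counting or omission. This is a routine reindexing once the single-step rule (\ref{pie}) is in hand, so the corollary follows directly by induction with no real obstacle beyond notational care.
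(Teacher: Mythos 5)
Your proposal is correct and is essentially the argument the paper intends: the corollary is stated as an immediate consequence of the right-Pieri rule (\ref{pie}), obtained by iterating it starting from $Q_{\al_1}=\sq_{(\al_1)}$, which is exactly your induction on $\ell(\al)$ with the coefficients multiplying along the chain. No further comment is needed.
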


\section{A natural basis in the peak algebra from NSQF}

By the expansion (\ref{exp}), we know that NSQF's also linearly span the peak subalgebra $\mP$. In this section we extract a natural basis for $\mP$ from the NSQF.

First we recall that Schur's Q-functions satisfy the anti-symmetric relation:
\[\mS_{\al}=-\mS_{\al_{ij}}\]
for any $i<j$, where $\al_{ij}$ is the composition derived from $\al$ by exchanging its $i$th and $j$th parts. Naturally, $\mS_{\al}$'s with $\al$ ranging over the strict partitions form a linear basis of $\Om$. On the other hand, since the NSQF serve as the noncommutative lift of Schur's Q-functions, we know that the kernel of $\pi|_\mP$ is spanned by those $\sq_\al+\sq_{\al_{ij}}$.

Now in order to define a natural basis from the NSQF, we need the following notion.

\begin{defn}
All the compositions of $n$ with peak subsets of $[n]$ as descent sets are those with all parts bigger than 1 except maybe only the last part equal to $1$. We call them the \textit{peak compositions}. We denote by $\pp\mc_n$ the set of peak compositions of $n$ and write $\pp\mc=\bigcup\limits^._{n\geq1}\pp\mc_n$.
\end{defn}
For example,
\[\pp_5=\{\emp_5,\,\{4\},\,\{3\},\,\{2\},\,\{2,4\}\}\]
and correspondingly
\[\pp\mc_5=\{5,\,41,\,32,\,23,\,2^21\}.\]

\begin{prop}
For  $\al=(\al_1,\dots,\al_r)\in\pp\mc$, the right-Pieri rule
(\ref{pie})
can be reformulated as
\beq\label{pie'}
\sq_\al Q_s=\sum_{\be\in\pp\mc\atop\al\subset_s\be}
2^{\ell(\be'-\al)-\de_{\al_r,1}(1-\de_{\be_{r+1},0})}\sq_\be,\eeq
\end{prop}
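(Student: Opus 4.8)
The strategy is to start from the general right-Pieri rule \eqref{pie} and show that, when the input $\al$ is a peak composition and we restrict attention to the peak-composition outputs $\be$, the exponent $\ell(\be'-\al)$ gets corrected by exactly $-\de_{\al_r,1}(1-\de_{\be_{r+1},0})$. First I would describe explicitly which $\be$ with $\al\subset_s\be$ are peak compositions: since $\al\in\pp\mc$, all of $\al_1,\dots,\al_{r-1}$ are $\geq 2$, and $\al_r$ is either $\geq 2$ or equal to $1$. Because $\al_i\leq\be_i$ for $i\leq r$ and $\ell(\be)\leq r+1$, the only way $\be$ can fail to be a peak composition is to have a ``$1$'' in a non-final position. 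If $\al_r\geq 2$, every admissible $\be$ automatically has $\be_i\geq\al_i\geq 2$ for $i\leq r$, so the only possibly-small part is $\be_{r+1}$, which is allowed to be $1$ as the last part; hence in this case \emph{every} $\be$ with $\al\subset_s\be$ is already in $\pp\mc$, the correction term vanishes (as $\de_{\al_r,1}=0$), and \eqref{pie'} is just \eqref{pie}. The substantive case is $\al_r=1$.

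When $\al_r=1$, the admissible $\be$ split into two families. In the first, $\ell(\be)\leq r$, i.e. $\be_{r+1}=0$: then $\be'=\be$, and the requirement $\be_r\geq\al_r=1$ forces nothing new, but $\be$ is a peak composition iff $\be_r=1$ is the final part and $\be_1,\dots,\be_{r-1}\geq 2$; the latter holds automatically since $\be_i\geq\al_i\geq 2$ for $i<r$, and $\be_r\geq\al_r=1$ — but $\be_r$ could also be $\geq 2$, which is fine since it's the last part. So these $\be$ are all in $\pp\mc$, and here $\de_{\be_{r+1},0}=1$ makes the correction $-\de_{\al_r,1}(1-1)=0$, matching \eqref{pie}. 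In the second family, $\be_{r+1}\geq 1$, so $\be$ has $r+1$ parts; for $\be\in\pp\mc$ we need $\be_r\geq 2$ (since it is no longer the last part) and $\be_{r+1}$ arbitrary $\geq 1$ (if $\be_{r+1}=1$ it is the legal trailing $1$; if $\be_{r+1}\geq 2$, also fine). The point is that in \eqref{pie} the truncation is $\be'=(\be_1,\dots,\be_{\ell(\al)})=(\be_1,\dots,\be_r)$, and $\be'-\al$ has a nonzero entry in coordinate $r$ precisely because $\be_r\geq 2>1=\al_r$; so $\ell(\be'-\al)$ counts that coordinate, whereas in the intended peak-composition bookkeeping one does not want to ``pay'' for promoting the trailing $1$ of $\al$ into a genuine part. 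Subtracting $1$ from the exponent exactly removes that contribution, which is the content of the factor $-\de_{\al_r,1}(1-\de_{\be_{r+1},0})$.

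Concretely, the key step is the identity, valid for $\al\in\pp\mc$ with $\al_r=1$ and $\be$ a peak composition with $\al\subset_s\be$ and $\be_{r+1}\geq 1$,
\[
\ell(\be'-\al)-1 \;=\; \ell\bigl((\be_1-\al_1,\dots,\be_{r-1}-\al_{r-1})\bigr),
\]
i.e. removing coordinate $r$ from $\be'-\al$ drops the length by exactly one because $\be_r-\al_r=\be_r-1\geq 1>0$. Plugging this, together with the case analysis above, back into \eqref{pie} and observing that the set of $\be$ appearing with nonzero coefficient in \eqref{pie} that are \emph{not} peak compositions contributes nothing on the peak side — one should check no cancellation or loss occurs, i.e. that the non-peak $\be$'s never needed to be there — gives \eqref{pie'}. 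The main obstacle I anticipate is bookkeeping rather than conceptual: one must verify carefully that when $\al_r=1$ the admissible $\be$ of the form $\al\subset_s\be$ are \emph{exactly} accounted for by peak compositions (no admissible $\be$ has an interior $1$, which would happen only if some $\be_i=1$ for $i<r$ — impossible since $\be_i\geq\al_i\geq 2$ — or $\be_r=1$ while $\be_{r+1}>0$ — which is precisely the non-peak case that must be shown to be absorbed), and that the exponent shift is uniform across all such $\be$. Once the correspondence of index sets is pinned down, the exponent identity is immediate from the definition of $\ell$ on $\bN_0^{\ell(\al)}$.
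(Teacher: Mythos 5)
Your case analysis of which $\be$ with $\al\subset_s\be$ lie in $\pp\mc$ is correct (for $\al_r\geq2$ all of them do, and for $\al_r=1$ the only failures are $\be=(\ga,1,m)$ with $\ga=(\be_1,\dots,\be_{r-1})$ and $m=\be_{r+1}\geq1$), but the core of your argument has a genuine gap: you assert that the non-peak $\be$'s ``contribute nothing on the peak side'' and explain the correction $-\de_{\al_r,1}(1-\de_{\be_{r+1},0})$ as a bookkeeping choice about not ``paying'' for coordinate $r$ of $\be'-\al$. That is not what happens, and no derivation of the new coefficients is actually given. The functions $\sq_\be$ for non-peak $\be$ are in general nonzero; the missing ingredient is the key relation (\ref{re}), which lets one rewrite $\sq_{\ga,1,m}=-\sum_{i=2}^{m}\sq_{\ga,i,m+1-i}$, and it is precisely this rewriting, combined with the peak terms already present in (\ref{pie}), that produces the exponent shift. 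Concretely, when $\al_r=1$, a peak composition $\be=(\ga,j,k)$ with $j\geq2$, $k\geq1$ occurs in (\ref{pie}) with coefficient $2^{\ell(\ga-\al')+1}$ (where $\al'=(\al_1,\dots,\al_{r-1})$, since $\be_r-\al_r=j-1>0$), while the non-peak term $(\ga,1,j+k-1)$ occurs with coefficient $2^{\ell(\ga-\al')}$ and, after applying (\ref{re}), contributes $-2^{\ell(\ga-\al')}\sq_{\ga,j,k}$; the total is $2^{\ell(\ga-\al')}=2^{\ell(\be'-\al)-1}$, which is exactly the corrected exponent in (\ref{pie'}). This absorption of the non-peak terms is the paper's entire proof.

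If the non-peak terms were simply discarded, as your plan proposes, the resulting formula would be wrong. For instance, with $\al=(2,1)$ and $s=2$, formula (\ref{pie}) gives $\sq_{21}Q_2=2\sq_{41}+4\sq_{32}+2\sq_{23}+2\sq_{311}+2\sq_{221}+\sq_{212}$; by (\ref{re}) one has $\sq_{311}=0$ and $\sq_{212}=-\sq_{221}$, so the coefficient of $\sq_{221}$ drops from $2$ to $1=2^{\ell(\be'-\al)-1}$, in agreement with (\ref{pie'}), whereas dropping the non-peak terms would leave it equal to $2$. (Note $\sq_{212}\neq0$, so these terms really do matter.) Your exponent identity $\ell(\be'-\al)-1=\ell\bigl((\be_1-\al_1,\dots,\be_{r-1}-\al_{r-1})\bigr)$ is correct arithmetic, but by itself it proves nothing about the coefficients; the step you would need to add is exactly the use of relation (\ref{re}) to convert each $\sq_{\ga,1,m}$ into a signed sum of basis-indexed terms and then to recombine coefficients.
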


\begin{proof}
For the right-Pieri rule (\ref{pie}), if $\al\in\pp\mc$, then there exist two situations for the composition $\be$ with $\al\subset_s\be$:

\noindent(1)\quad $\be$ is still a peak composition.

\noindent(2)\quad $\be$ has the second last part equal to 1.

For case (1), nothing needs to be modified. For case (2), we can replace such $\sq_\be$ by a linear combination of NSQF's indexed by peak compositions appearing in case (1), since
\[\sq_\be=\sq_{\ga,1,m}=-\sum_{i=2}^m\sq_{\ga,i,m+1-i}\]
by relation (\ref{re}). That gives the formula we desired.
\end{proof}

\begin{theorem}
For $n\in\bN$, those $\sq_\al$'s with $\al\in\pp\mc_n$ form a linear basis of $\mP_n$.
\end{theorem}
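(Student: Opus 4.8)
The plan is to show that the family $\{\sq_\al : \al\in\pp\mc_n\}$ both spans $\mP_n$ and has cardinality equal to $\dim\mP_n$, so that spanning forces linear independence. For the dimension count, recall from the background section that $\dim\mP_n = |\mc_o(n)| = f_{n-1}$, the Fibonacci number; alternatively $\dim\mP_n = |\pp_n|$. Since peak compositions are exactly the compositions whose descent set is a peak set (the defining property in the Definition above), the map $\al\mapsto D(\al)$ is a bijection from $\pp\mc_n$ onto $\pp_n$, so $|\pp\mc_n| = |\pp_n| = \dim\mP_n$. Thus it suffices to prove that the $\sq_\al$ with $\al\in\pp\mc_n$ span $\mP_n$.

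For spanning, I would argue as follows. By the expansion formula (\ref{exp}), every $Q_\al$ is a $\bZ$-linear combination of NSQF's $\sq_\be$, so the full set $\{\sq_\be : \be\vDash n\}$ spans $\mP_n$ (indeed it spans $\mb{NSym}_n$'s image, but since we already know these lie in $\mP_n$ and the $Q_\al$ span $\mP_n$, we get $\mP_n$). It then remains to reduce an arbitrary $\sq_\be$, $\be\vDash n$, to a combination of $\sq_\al$ with $\al\in\pp\mc_n$. Two reduction moves are available: first, the anti-symmetry kernel relation $\sq_\al + \sq_{\al_{ij}} \in \mb{Ker}\,\pi|_{\mP}$ is \emph{not} directly usable (it is only a kernel statement, not a rewriting rule in $\mP$ itself), so instead I use the genuine relation (\ref{re}), namely $\sum_{i=1}^{m-1}\sq_{\ga,i,m-i}=0$, which holds identically in $\mP$. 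Concretely, relation (\ref{re}) lets us rewrite any $\sq_\be$ in which an internal part equals $1$ (i.e. $\be = (\ga,1,m,\dots)$ with the $1$ not in last position, or more generally with a $1$ followed by something) in terms of NSQF's with that $1$ removed and the following part increased: $\sq_{\ga,1,m,\eta} = -\sum_{i=2}^m \sq_{\ga,i,m+1-i,\eta}$. Every $\be$ not in $\pp\mc_n$ has a part equal to $1$ that is not the last part; applying this rewriting repeatedly strictly decreases the number of internal $1$'s (or pushes $1$'s rightward toward the last slot) while preserving the degree and not increasing the length, so the process terminates in NSQF's indexed by peak compositions. One must check that this is a well-founded reduction — a suitable statistic is, say, the pair (number of parts equal to $1$ not in the last position, then length), ordered lexicographically, which strictly decreases under each application of (\ref{re}).

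The main obstacle I anticipate is making the termination/well-foundedness argument for the reduction airtight: each application of $\sq_{\ga,1,m,\eta}=-\sum_i\sq_{\ga,i,m+1-i,\eta}$ may reintroduce $1$'s further to the right (when $m+1-i=1$, i.e. $i=m$, the term $\sq_{\ga,m,1,\eta}$ still has a non-final $1$ if $\eta\neq\emp$), so a naive count of $1$'s need not drop. The fix is to process the composition from right to left: first make the suffix a peak composition, then move leftward, noting that a rewrite applied at a given position only alters parts at or before that position, hence never spoils an already-cleaned suffix; within a fixed suffix, one orders by the position of the leftmost bad $1$ (further right is smaller) and then by length. Alternatively — and perhaps more cleanly — one can bypass the explicit algorithm entirely: since we already know $\dim\mP_n = |\pp\mc_n|$ and that $\{\sq_\be\}_{\be\vDash n}$ spans $\mP_n$, and since the reformulated Pieri rule (\ref{pie'}) shows that the $\bZ$-span of $\{\sq_\al:\al\in\pp\mc_n\}$ is closed under right multiplication by the $Q_s$ and contains $\sq_{(n)}=Q_n$ (and more generally is closed under the operators $\bY_{-m}$ when one stays inside peak compositions), one builds up all of $\mP_n$ from $Q_1=\sq_{(1)}$ by repeated right-Pieri, staying within the peak-composition-indexed span; this again gives spanning, and the dimension count finishes the proof. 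I would present the right-to-left reduction as the main line, with the Pieri-closure remark as a cross-check, and invoke the equality $|\pp\mc_n|=|\pp_n|=\dim\mP_n$ to conclude linear independence.
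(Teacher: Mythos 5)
Your main line has a genuine gap: the rewriting rule $\sq_{\ga,1,m,\eta}=-\sum_{i=2}^{m}\sq_{\ga,i,m+1-i,\eta}$ is not a consequence of (\ref{re}) when the suffix $\eta$ is nonempty, and in fact it is false. Relation (\ref{re}) is obtained by applying $\bY_{-\al}$ on the left to the identity $\sum_{i=1}^{n-1}\sq_{i,n-i}=0$, so it only covers the case where the pair being rewritten occupies the \emph{last two} positions of the index. To insert a suffix $\eta$ you would need the operator identity $\sum_{i=1}^{n-1}\bY_{-i}\bY_{-(n-i)}=0$ on $\mP$ (so that it can be applied to $\sq_\eta$), which is neither proved in the paper nor true: taking $m=2$, $\ga$ empty and $\eta=(1)$, your rule would give $\sq_{121}=-\sq_{211}=0$, whereas the degree-$4$ relations in the paper's table give $\sq_{21^2}=0$ but $\sq_{121}=-2\sq_{2^2}\neq 0$, i.e. $\sq_{121}+\sq_{211}=-2\sq_{2^2}$. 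So the difficulty is not the termination of your reduction (which is all that the right-to-left ordering addresses) but the validity of each individual rewriting step; with (\ref{re}) alone one may only manipulate the last two parts, and an internal $1$ cannot in general be eliminated this way.

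By contrast, the alternative you offer as a cross-check is essentially the paper's own proof and should be promoted to the main argument: since $|\pp\mc_n|=|\pp_n|=f_{n-1}=\dim\mP_n$, it suffices to prove spanning, and for each $\be=(\be_1,\dots,\be_r)\vDash n$ one writes $Q_\be=\sq_{(\be_1)}Q_{\be_2}\cdots Q_{\be_r}$ and applies the reformulated right-Pieri rule (\ref{pie'}) repeatedly; every intermediate product stays in the span of the $\sq_\al$ with $\al$ a peak composition, hence each $Q_\be$, and therefore all of $\mP_n$, lies in that span, and the dimension count forces linear independence. (Two small adjustments: start from $\sq_{(\be_1)}=Q_{\be_1}$ for each $\be_1$ rather than from $Q_1$ alone, and the parenthetical appeal to closure under the operators $\bY_{-m}$ is not needed; likewise the preliminary observation via (\ref{exp}) that the full family $\{\sq_\be\}_{\be\vDash n}$ spans $\mP_n$ becomes superfluous once the Pieri-closure argument is in place.)
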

\begin{proof}
Since $|\pp\mc_n|=|\pp_n|=f_{n-1}$, the $(n-1)$th Fibonacci number, we only need to show that any $Q_\be,\,\be\vDash n$, can be spanned by these  $\sq_\al$'s with $\al\in\pp\mc_n$, which can be seen by repeatedly applying the modified right-Pieri rule (\ref{pie'}).
\end{proof}

\begin{exam}
For those small $n$, we write down the natural basis and $2^{n-1}-f_{n-1}$ complete linear relations for $\sq_\al$'s, $\al\vDash n$. We also underline those elements derived from the natural basis by the complete relations. The computation for these relations heavily relies on the key relation (\ref{re}) and the right-Pieri rule for the NSQF.

\medskip\noindent
\begin{tabular}{|c|c|c|}
\hline
$n$& basis & relations \\
\hline
1& $\sq_1$ & $\backslash$\\
\hline
2& $\sq_2$ & $\sq_{1^2}$\\
\hline
3& $ \sq_3,\,\sq_{21}$ & $\sq_{1^3},\,\sq_{21}+\ull{\sq_{12}}$\\
\hline
4& $ \sq_4,\,\sq_{31},\,\sq_{2^2}$ & $\sq_{1^4},\,\sq_{21^2},\,\sq_{121}+\ull{\sq_{1^22}},\,
\ull{\sq_{121}}+2\sq_{2^2},
\sq_{31}+\sq_{2^2}+\ull{\sq_{13}}$\\
\hline
5& $\sq_5,\,\sq_{41},\,\sq_{32},\,\sq_{23},\,\sq_{2^21}$ & $\sq_{1^5},\,\sq_{21^3},\,\sq_{31^2},\,\sq_{121^2},\,
\sq_{1^221}+\ull{\sq_{1^32}},\,
\sq_{2^21}+\ull{\sq_{212}},$\\
&&$\sq_{2^21}+\ull{\sq_{12^2}},\,2\sq_{12^2}+\ull{\sq_{1^221}},\,
2(\sq_{32}+\sq_{23})+\sq_{2^21}+\ull{\sq_{131}},$\\
&&$\sq_{131}+\sq_{12^2}+\ull{\sq_{1^23}},\,
\sq_{41}+\sq_{32}+\sq_{23}+\ull{\sq_{14}}$\\
\hline
6& $\sq_6,\,\sq_{51},\,\sq_{42},\,\sq_{24},\,\sq_{3^2},$ & $\sq_{1^6},\,\sq_{21^4},\,\sq_{31^3},\,\sq_{41^2},\,
\sq_{131^2},\,\sq_{2^21^2},\,\sq_{121^3},\,\sq_{1^221^2},$\\
&$\sq_{321},\,\sq_{231},\sq_{2^3}$&
$\sq_{321}+\ull{\sq_{312}},\,\sq_{2121}+\ull{\sq_{21^22}},\,
\sq_{12^21}+\ull{\sq_{1212}},\,\sq_{1^321}+\ull{\sq_{1^42}},$\\
&&$\sq_{12^21}+\ull{\sq_{1^22^2}},\,
\ull{\sq_{1^321}}+2\sq_{1^22^2},\,\ull{\sq_{2121}}+2\sq_{2^3},$\\
&&
$2(\sq_{132}+\sq_{123})+\sq_{12^21}+\ull{\sq_{1^231}},$\\
&&
$2(\sq_{231}+\sq_{3^2})+\sq_{321}+\ull{\sq_{132}}+\sq_{2^3},$\\
&&
$2(\sq_{42}+\sq_{24})+\sq_{141}+\sq_{321}+
\ull{\sq_{123}}+\sq_{2^3},$\\
&&
$2(\sq_{42}+\sq_{24}+\sq_{3^2})+\ull{\sq_{141}}+\sq_{321}+
\sq_{231},$\\
&&
$\ull{\sq_{12^21}}+2(\sq_{321}+
\sq_{231}+\sq_{132}+\sq_{123})+4\sq_{2^3},$\\
&&
$\sq_{231}+\sq_{2^3}+\ull{\sq_{213}},\,
\sq_{1^231}+\sq_{1^22^2}+\ull{\sq_{1^33}},$\\
&&$\sq_{141}+\sq_{132}+\sq_{123}+\ull{\sq_{1^24}},\,
\sq_{51}+\sq_{42}+\sq_{3^2}+\sq_{24}+\ull{\sq_{15}}$\\
\hline
\end{tabular}

\end{exam}

\subsection{Combinatorial description for NSQF}
In order to convince the reader that our basis from the NSQF is quite natural, we give some interesting combinatorial description here. First let us introduce the following combinatorial objects.

In \cite{BBSSZ}, the authors defined the notion of immaculate tableaux to discuss the relations between the classical bases of the NSym and their immaculate basis. Let $\al,\,\be$ be compositions. Recall that an \textit{immaculate tableau} of shape $\al$ and content $\be$ is a labeling of the boxes of the diagram of $\al$ by positive integers such that:

\noindent(a)\quad the number of boxes labeled by $i$ is $\be_i$;

\noindent(b)\quad the sequence of entries in each row, from left to right, is weakly increasing;

\noindent(c)\quad the sequence of entries in the first column, from top to bottom, is increasing.

\noindent An immaculate tableau is said to be \textit{standard} if it has content $1^{|\al|}$.
Correspondingly, they created a labeled poset $\fc$ on $\mc$, called the \textit{immaculate poset}, such that $\be$ covers $\al$ if $\al\subset_1\be$. Write $\be\stackrel{m}{\rw}\al$ if $\be$ is obtained by adding 1 to the $m$th part of $\al$. Note that standard immaculate tableaux of shape $\be/\al$ one-to-one correspond to maximal chains on this poset from $\al$ to $\be$.

Now we just need to focus on the subposet of $\fc$ on $\pp\mc$ and call it the \textit{peak composition poset}, denoted by $\fp\fc$. The first few levels of $\fp\fc$ are portrayed as follows.
\[\xy 0;/r.5pc/:
{\ar^{\stt{1}}(0,-3.5)*{};(0,-1)*{}**\dir{-};};
{\ar^{\stt{1}}(0,1.5)*{};(0,4)*{}**\dir{-};};
{\ar^{\stt{2}}(-1,5.5)*{};(-4,9)*{}**\dir{-};};
{\ar^{\stt{1}}(1,5.5)*{};(4,9)*{}**\dir{-};};
{\ar^{\stt{2}}(-6,11.5)*{};(-8,14)*{}**\dir{-};};
{\ar^{\stt{1}}(6,11.5)*{};(8,14)*{}**\dir{-};};
{\ar^{\stt{1}}(-4,11.5)*{};(-2,14)*{}**\dir{-};};
{\ar^{\stt{2}}(4,11.5)*{};(2,14)*{}**\dir{-};};
{\ar^{\stt{3}}(-11,16.5)*{};(-13,19)*{}**\dir{-};};
{\ar^{\stt{1}}(11,16.5)*{};(13,19)*{}**\dir{-};};
{\ar^{\stt{1}}(1,16.5)*{};(5,19)*{}**\dir{-};};
{\ar^{\stt{2}}(0,16.5)*{};(0,18.5)*{}**\dir{-};};
{\ar^{\stt{1}}(-8,15.5)*{};(-1,18.5)*{}**\dir{-};};
{\ar^{\stt{2}}(-9,16.5)*{};(-8,18.5)*{}**\dir{-};};
{\ar^{\stt{2}}(9,16.5)*{};(8,19)*{};};
(0,-5)*{\stt{\emptyset}};
(0,0)*{\xy 0;/r.07pc/:
(0,0)*{};(5,0)*{}**\dir{-};
(0,-5)*{};(5,-5)*{}**\dir{-};
(0,0)*{};(0,-5)*{}**\dir{-};
(5,0)*{};(5,-5)*{}**\dir{-};
\endxy};
(0,5)*{\xy 0;/r.07pc/:
(0,0)*{};(10,0)*{}**\dir{-};
(0,-5)*{};(10,-5)*{}**\dir{-};
(0,0)*{};(0,-5)*{}**\dir{-};
(5,0)*{};(5,-5)*{}**\dir{-};
(10,0)*{};(10,-5)*{}**\dir{-};
\endxy};
(-5,10)*{\xy 0;/r.07pc/:
(0,0)*{};(10,0)*{}**\dir{-};
(0,-5)*{};(10,-5)*{}**\dir{-};
(0,-10)*{};(5,-10)*{}**\dir{-};
(0,0)*{};(0,-10)*{}**\dir{-};
(5,0)*{};(5,-10)*{}**\dir{-};
(10,0)*{};(10,-5)*{}**\dir{-};
\endxy};
(5,10)*{\xy 0;/r.07pc/:
(0,0)*{};(15,0)*{}**\dir{-};
(0,-5)*{};(15,-5)*{}**\dir{-};
(0,0)*{};(0,-5)*{}**\dir{-};
(5,0)*{};(5,-5)*{}**\dir{-};
(10,0)*{};(10,-5)*{}**\dir{-};
(15,0)*{};(15,-5)*{}**\dir{-};
\endxy};
(-10,15)*{\xy 0;/r.07pc/:
(0,0)*{};(10,0)*{}**\dir{-};
(0,-5)*{};(10,-5)*{}**\dir{-};
(0,-10)*{};(10,-10)*{}**\dir{-};
(0,0)*{};(0,-10)*{}**\dir{-};
(5,0)*{};(5,-10)*{}**\dir{-};
(10,0)*{};(10,-10)*{}**\dir{-};
\endxy};
(0,15)*{\xy 0;/r.07pc/:
(0,0)*{};(15,0)*{}**\dir{-};
(0,-5)*{};(15,-5)*{}**\dir{-};
(0,-10)*{};(5,-10)*{}**\dir{-};
(0,0)*{};(0,-10)*{}**\dir{-};
(5,0)*{};(5,-10)*{}**\dir{-};
(10,0)*{};(10,-5)*{}**\dir{-};
(15,0)*{};(15,-5)*{}**\dir{-};
\endxy};
(10,15)*{\xy 0;/r.07pc/:
(0,0)*{};(20,0)*{}**\dir{-};
(0,-5)*{};(20,-5)*{}**\dir{-};
(0,0)*{};(0,-5)*{}**\dir{-};
(5,0)*{};(5,-5)*{}**\dir{-};
(10,0)*{};(10,-5)*{}**\dir{-};
(15,0)*{};(15,-5)*{}**\dir{-};
(20,0)*{};(20,-5)*{}**\dir{-};
\endxy};
(-15,20)*{\xy 0;/r.07pc/:
(0,0)*{};(10,0)*{}**\dir{-};
(0,-5)*{};(10,-5)*{}**\dir{-};
(0,-10)*{};(10,-10)*{}**\dir{-};
(0,-15)*{};(5,-15)*{}**\dir{-};
(0,0)*{};(0,-15)*{}**\dir{-};
(5,0)*{};(5,-15)*{}**\dir{-};
(10,0)*{};(10,-10)*{}**\dir{-};
\endxy};
(-7.5,20)*{\xy 0;/r.07pc/:
(0,0)*{};(10,0)*{}**\dir{-};
(0,-5)*{};(15,-5)*{}**\dir{-};
(0,-10)*{};(15,-10)*{}**\dir{-};
(0,0)*{};(0,-10)*{}**\dir{-};
(5,0)*{};(5,-10)*{}**\dir{-};
(10,0)*{};(10,-10)*{}**\dir{-};
(15,-5)*{};(15,-10)*{}**\dir{-};
\endxy};
(0,20)*{\xy 0;/r.07pc/:
(0,0)*{};(15,0)*{}**\dir{-};
(0,-5)*{};(15,-5)*{}**\dir{-};
(0,-10)*{};(10,-10)*{}**\dir{-};
(0,0)*{};(0,-10)*{}**\dir{-};
(5,0)*{};(5,-10)*{}**\dir{-};
(10,0)*{};(10,-10)*{}**\dir{-};
(15,0)*{};(15,-5)*{}**\dir{-};
\endxy};
(7.5,20)*{\xy 0;/r.07pc/:
(0,0)*{};(20,0)*{}**\dir{-};
(0,-5)*{};(20,-5)*{}**\dir{-};
(0,-10)*{};(5,-10)*{}**\dir{-};
(0,0)*{};(0,-10)*{}**\dir{-};
(5,0)*{};(5,-10)*{}**\dir{-};
(10,0)*{};(10,-5)*{}**\dir{-};
(15,0)*{};(15,-5)*{}**\dir{-};
(20,0)*{};(20,-5)*{}**\dir{-};
\endxy};
(15,20)*{\xy 0;/r.07pc/:
(0,0)*{};(25,0)*{}**\dir{-};
(0,-5)*{};(25,-5)*{}**\dir{-};
(0,0)*{};(0,-5)*{}**\dir{-};
(5,0)*{};(5,-5)*{}**\dir{-};
(10,0)*{};(10,-5)*{}**\dir{-};
(15,0)*{};(15,-5)*{}**\dir{-};
(20,0)*{};(20,-5)*{}**\dir{-};
(25,0)*{};(25,-5)*{}**\dir{-};
\endxy};\endxy\]

\begin{defn} A tableau of shape $\al$ and content $\be$ is called a \textit{peak composition tableau} (PCT)
if (1) it is an immaculate tableau, and (2) for any $i:1\leq i\leq r=\ell(\be)$, the subdiagram of $\al$ with labeling in $\{1,\dots,i\}$ gives a peak composition.
\end{defn}
The definition clearly implies that $\al\in\pp\mc$. Let $\mb{PCT}(\al,\be)$ denote the set of peak composition tableaux of shape $\al$ and content $\be$, and let $\mb{p}_{\al,\be}$ be its cardinality. Then one can easily see that

%

\noindent (1)\quad $\mb{p}_{\al,\al}=1$ for any $\al\in\pp\mc$. Such unique tableau consists of $\al_i$ many $i$'s in the $i$th row.

\noindent (2)\quad $\mb{p}_{\al,\be}=0$ unless $\be\leq_l\al$, where $\leq_l$ represents the lexicographic order on compositions.

For example,
\[\mb{PCT}(342,2214)=\left\{\,
\raisebox{1.5em}{\xy 0;/r.16pc/:
(0,0)*{};(15,0)*{}**\dir{-};
(0,-5)*{};(20,-5)*{}**\dir{-};
(0,-10)*{};(20,-10)*{}**\dir{-};
(0,-15)*{};(10,-15)*{}**\dir{-};
(0,0)*{};(0,-15)*{}**\dir{-};
(5,0)*{};(5,-15)*{}**\dir{-};
(10,0)*{};(10,-15)*{}**\dir{-};
(15,0)*{};(15,-10)*{}**\dir{-};
(20,-5)*{};(20,-10)*{}**\dir{-};
(2.5,-2.5)*{\stt{1}};(7.5,-2.5)*{\stt{1}};(12.5,-2.5)*{\stt{2}};
(2.5,-7.5)*{\stt{2}};(7.5,-7.5)*{\stt{3}};(12.5,-7.5)*{\stt{4}};
(17.5,-7.5)*{\stt{4}};
(2.5,-12.5)*{\stt{4}};(7.5,-12.5)*{\stt{4}};
\endxy}\,,\,\raisebox{1.5em}{\xy 0;/r.16pc/:
(0,0)*{};(15,0)*{}**\dir{-};
(0,-5)*{};(20,-5)*{}**\dir{-};
(0,-10)*{};(20,-10)*{}**\dir{-};
(0,-15)*{};(10,-15)*{}**\dir{-};
(0,0)*{};(0,-15)*{}**\dir{-};
(5,0)*{};(5,-15)*{}**\dir{-};
(10,0)*{};(10,-15)*{}**\dir{-};
(15,0)*{};(15,-10)*{}**\dir{-};
(20,-5)*{};(20,-10)*{}**\dir{-};
(2.5,-2.5)*{\stt{1}};(7.5,-2.5)*{\stt{1}};(12.5,-2.5)*{\stt{3}};
(2.5,-7.5)*{\stt{2}};(7.5,-7.5)*{\stt{2}};(12.5,-7.5)*{\stt{4}};
(17.5,-7.5)*{\stt{4}};
(2.5,-12.5)*{\stt{4}};(7.5,-12.5)*{\stt{4}};
\endxy}\,,
\,\raisebox{1.5em}{\xy 0;/r.16pc/:
(0,0)*{};(15,0)*{}**\dir{-};
(0,-5)*{};(20,-5)*{}**\dir{-};
(0,-10)*{};(20,-10)*{}**\dir{-};
(0,-15)*{};(10,-15)*{}**\dir{-};
(0,0)*{};(0,-15)*{}**\dir{-};
(5,0)*{};(5,-15)*{}**\dir{-};
(10,0)*{};(10,-15)*{}**\dir{-};
(15,0)*{};(15,-10)*{}**\dir{-};
(20,-5)*{};(20,-10)*{}**\dir{-};
(2.5,-2.5)*{\stt{1}};(7.5,-2.5)*{\stt{1}};(12.5,-2.5)*{\stt{4}};
(2.5,-7.5)*{\stt{2}};(7.5,-7.5)*{\stt{2}};(12.5,-7.5)*{\stt{3}};
(17.5,-7.5)*{\stt{4}};
(2.5,-12.5)*{\stt{4}};(7.5,-12.5)*{\stt{4}};
\endxy}\,,
\,\raisebox{1.5em}{\xy 0;/r.16pc/:
(0,0)*{};(15,0)*{}**\dir{-};
(0,-5)*{};(20,-5)*{}**\dir{-};
(0,-10)*{};(20,-10)*{}**\dir{-};
(0,-15)*{};(10,-15)*{}**\dir{-};
(0,0)*{};(0,-15)*{}**\dir{-};
(5,0)*{};(5,-15)*{}**\dir{-};
(10,0)*{};(10,-15)*{}**\dir{-};
(15,0)*{};(15,-10)*{}**\dir{-};
(20,-5)*{};(20,-10)*{}**\dir{-};
(2.5,-2.5)*{\stt{1}};(7.5,-2.5)*{\stt{1}};(12.5,-2.5)*{\stt{4}};
(2.5,-7.5)*{\stt{2}};(7.5,-7.5)*{\stt{2}};(12.5,-7.5)*{\stt{4}};
(17.5,-7.5)*{\stt{4}};
(2.5,-12.5)*{\stt{3}};(7.5,-12.5)*{\stt{4}};
\endxy}\,\right\}.
\]
By contrast,
\[T=\raisebox{1.5em}{\xy 0;/r.16pc/:
(0,0)*{};(15,0)*{}**\dir{-};
(0,-5)*{};(20,-5)*{}**\dir{-};
(0,-10)*{};(20,-10)*{}**\dir{-};
(0,-15)*{};(10,-15)*{}**\dir{-};
(0,0)*{};(0,-15)*{}**\dir{-};
(5,0)*{};(5,-15)*{}**\dir{-};
(10,0)*{};(10,-15)*{}**\dir{-};
(15,0)*{};(15,-10)*{}**\dir{-};
(20,-5)*{};(20,-10)*{}**\dir{-};
(2.5,-2.5)*{\stt{1}};(7.5,-2.5)*{\stt{1}};(12.5,-2.5)*{\stt{2}};
(2.5,-7.5)*{\stt{2}};(7.5,-7.5)*{\stt{4}};(12.5,-7.5)*{\stt{4}};
(17.5,-7.5)*{\stt{4}};
(2.5,-12.5)*{\stt{3}};(7.5,-12.5)*{\stt{4}};
\endxy}\]
is only an immaculate tableau, but not a peak composition tableau.

Note that any standard peak composition tableau is just a standard immaculate tableau with a peak composition as its shape, but the converse is \textit{not} true.
Hence, standard peak composition tableaux of shape $\be/\al$, $\al,\,\be\in\pp\mc$, one-to-one correspond to maximal chains on $\fp\fc$ from $\al$ to $\be$.

Given $T\in\mb{PCT}$, we also define $p_i(T),\,i\geq1$ as the number of \textit{distinct} integers appearing in the $i$th row of $T$ minus one. Let $p(T)=\sum_{i\geq1}p_i(T)$ and $m(T)$ be the number of boxes in the leftmost column whose right and bottom adjacent boxes are labeled with the same integer. For example,
\[T=\raisebox{2.4em}{\xy 0;/r.16pc/:
(0,0)*{};(15,0)*{}**\dir{-};
(0,-5)*{};(20,-5)*{}**\dir{-};
(0,-10)*{};(20,-10)*{}**\dir{-};
(0,-15)*{};(15,-15)*{}**\dir{-};
(0,-20)*{};(15,-20)*{}**\dir{-};
(0,-25)*{};(5,-25)*{}**\dir{-};
(0,0)*{};(0,-25)*{}**\dir{-};
(5,0)*{};(5,-25)*{}**\dir{-};
(10,0)*{};(10,-20)*{}**\dir{-};
(15,0)*{};(15,-10)*{}**\dir{-};
(15,-15)*{};(15,-20)*{}**\dir{-};
(20,-5)*{};(20,-10)*{}**\dir{-};
(2.5,-2.5)*{\stt{1}};(7.5,-2.5)*{\stt{1}};(12.5,-2.5)*{\stt{2}};
(2.5,-7.5)*{\stt{2}};(7.5,-7.5)*{\stt{3}};(12.5,-7.5)*{\stt{3}};
(17.5,-7.5)*{\stt{5}};
(2.5,-12.5)*{\stt{3}};(7.5,-12.5)*{\stt{4}};
(2.5,-17.5)*{\stt{4}};(7.5,-17.5)*{\stt{4}};
(12.5,-17.5)*{\stt{5}};(2.5,-22.5)*{\stt{5}};
\endxy}\]
Then $p_1(T)=p_3(T)=p_4(T)=1,\,p_2(T)=2,\,p_5(T)=0$, thus $p(T)=5$. Also, $m(T)=2$.

Now using the reformulated Pieri rule (\ref{pie'}) and the terminology of PCT, we can simplify formula (\ref{exp}) as follows:
\begin{theorem}
For any composition $\al$, $Q_\al$ has the following expansion in terms of the natural basis from NSQF:
\beq\label{exp'} Q_\al=\sum_{\be\in\pp\mc\atop\al\leq_l\be}\lb\sum_{T\in\bs{PCT}(\be,\al)}
2^{p(T)-m(T)}\rb\sq_\be.\eeq
\end{theorem}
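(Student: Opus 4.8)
The plan is to prove the expansion (\ref{exp'}) by induction on $\ell(\al)$, upgrading the already-established iterated expansion (\ref{exp}) by using the \emph{reformulated} right-Pieri rule (\ref{pie'}) in place of the original one (\ref{pie}). The point is that (\ref{exp}) expresses $Q_\al$ as a sum over chains $\be^{(1)}=(\al_1)\subset_{\al_2}\cdots\subset_{\al_r}\be^{(r)}=\be$ in the \emph{full} immaculate poset $\fc$, whereas we want a sum over chains in the \emph{peak composition poset} $\fp\fc$, i.e. chains staying inside $\pp\mc$, with the combinatorial weight repackaged as $2^{p(T)-m(T)}$ for PCT's $T$ of shape $\be$ and content $\al$. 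So the real content is a bijective/weight-matching argument: maximal chains in $\fp\fc$ from a one-box shape built up in steps of size $\al_2,\dots,\al_r$ (equivalently, the PCT of shape $\be$, content $\al$, via the standardization correspondence recalled in the text) must be put in weight-preserving correspondence with the output of iterating (\ref{pie'}).

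Concretely, I would set up the induction as follows. For $\ell(\al)=1$, $Q_{(m)}=2\Pi_{\emp_m}=\sq_{(m)}$ (since $(m)\in\pp\mc$ and the Pieri rule is vacuous), matching the unique PCT of shape $(m)$ and content $(m)$, which has $p=m-1$ wait—no, $p_1=0$ since only the integer $1$ appears, so $p(T)=0=m(T)$ and the coefficient is $2^0=1$. Good. For the inductive step write $\al=(\al',\al_r)$ with $\al'=(\al_1,\dots,\al_{r-1})$; by induction $Q_{\al'}=\sum_{\ga\in\pp\mc,\,\al'\leq_l\ga}\big(\sum_{S\in\bs{PCT}(\ga,\al')}2^{p(S)-m(S)}\big)\sq_\ga$, and then $Q_\al=Q_{\al'}Q_{\al_r}=\sum_\ga\big(\sum_S 2^{p(S)-m(S)}\big)\sq_\ga Q_{\al_r}$. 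Now expand each $\sq_\ga Q_{\al_r}$ by (\ref{pie'}): $\sq_\ga Q_{\al_r}=\sum_{\be\in\pp\mc,\,\ga\subset_{\al_r}\be}2^{\ell(\be'-\ga)-\de_{\ga_{\ell(\ga)},1}(1-\de_{\be_{\ell(\ga)+1},0})}\sq_\be$. Collecting the coefficient of $\sq_\be$ we must show
\[
\sum_{\ga\in\pp\mc,\ \al'\leq_l\ga,\ \ga\subset_{\al_r}\be}\ \sum_{S\in\bs{PCT}(\ga,\al')}2^{p(S)-m(S)}\cdot 2^{\ell(\be'-\ga)-\de_{\ga_{\ell(\ga)},1}(1-\de_{\be_{\ell(\ga)+1},0})}\ =\ \sum_{T\in\bs{PCT}(\be,\al)}2^{p(T)-m(T)}.
\]
The natural bijection is: a PCT $T$ of shape $\be$, content $\al$ restricts (delete all boxes labeled $r$) to a PCT $S$ of shape $\ga$, content $\al'$, where $\ga\in\pp\mc$ because $T$ is a PCT and the added boxes all carry the maximal label $r$, forming a horizontal-strip-like addition compatible with $\ga\subset_{\al_r}\be$; conversely each such $(S,\ga)$ with a placement of $\al_r$ boxes labeled $r$ extending to a PCT gives a unique $T$. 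So the combinatorial heart is the local identity: the number $2$-weighted over ways to add the $r$-boxes to a fixed $S$ (of shape $\ga$) to land in a fixed $\be$ equals $2^{\ell(\be'-\ga)-\de_{\ga_{\ell(\ga)},1}(1-\de_{\be_{\ell(\ga)+1},0})}$, and simultaneously $p(T)-m(T)=\big(p(S)-m(S)\big)+\big(\ell(\be'-\ga)-\de_{\ga_{\ell(\ga)},1}(1-\de_{\be_{\ell(\ga)+1},0})\big)$. This is exactly an accounting of how the statistics $p$ and $m$ change when a maximal label is added: each row of $\ga$ that genuinely grows (i.e. gains an $r$, so $\be_i>\ga_i$) contributes $+1$ to $p$ through $p_i$ — that accounts for $\ell(\be'-\ga)$ — and one corrects by $m$: the first-column box whose right and bottom neighbors become equal to $r$ is exactly the phenomenon the $\de_{\ga_{\ell(\ga)},1}(1-\de_{\be_{\ell(\ga)+1},0})$ term subtracts, matching the $-\de_{\al_r,1}$-type correction built into (\ref{pie'}).

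I would then verify this last local identity by a careful but elementary case analysis, distinguishing (i) whether $\be$ has one more part than $\ga$ (a new last row containing only $r$'s), (ii) whether $\ga$'s last part is $1$ (so the $m$-correction can trigger), and (iii) which rows of $\ga$ receive new $r$-labels. In each case one checks both the power of $2$ and the change in $p-m$ agree, which — combined with the restriction bijection $T\mapsto S$ being weight-compatible summand-by-summand — completes the induction. The main obstacle I anticipate is precisely bookkeeping the $m(T)$ statistic: $m$ counts leftmost-column boxes whose right \emph{and} bottom neighbors share a label, and adding the top label $r$ can create such a configuration only in a controlled spot (when the previous shape ended in a part $1$ that now becomes a part $2$ with the new box above a fresh $r$-row), so one must check that this matches the indicator $\de_{\ga_{\ell(\ga)},1}(1-\de_{\be_{\ell(\ga)+1},0})$ in (\ref{pie'}) in every sub-case, including degenerate ones where $\ga$ or $\be$ is very short. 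The $p(T)$ part, by contrast, is a straightforward count of how many rows acquire a new distinct value and should pose no difficulty. An alternative to the induction-plus-local-identity route would be to directly invert the relation (\ref{exp'}) against the Pieri rule (\ref{pie'}) read as a recursion for $\sq_\be$ in the $Q_\al$'s, but the forward induction above seems cleanest since (\ref{exp}) is already in hand and only needs to be reorganized.
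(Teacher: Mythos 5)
Your proposal is correct and follows essentially the same route as the paper, which states the theorem as a direct consequence of iterating the reformulated right-Pieri rule (\ref{pie'}) (in the same way (\ref{exp}) follows from (\ref{pie})) and reads the resulting weighted chains in the peak composition poset as peak composition tableaux. Your induction on $\ell(\al)$, the restriction bijection $T\mapsto S$ obtained by deleting the maximal label, and the local identity matching the exponent $\ell(\be'-\ga)-\de_{\ga_{\ell(\ga)},1}(1-\de_{\be_{\ell(\ga)+1},0})$ with the change in $p(T)-m(T)$ are exactly the details the paper leaves implicit, and they check out.
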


Meanwhile, we also find that
\begin{lem}\label{da}
For any $T\in\mb{PCT}(\be,\al)$, $p(T)-m(T)+\ell(\be)-\ell(\al)\geq0$.
\end{lem}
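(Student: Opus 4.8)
The plan is to prove the inequality by a direct incidence count on $T$, bypassing the right-Pieri rule entirely. Let $T\in\mb{PCT}(\be,\al)$, write $d_i$ for the number of distinct entries in the $i$-th row of $T$ (so $p_i(T)=d_i-1$ and $p(T)=\sum_i d_i-\ell(\be)$, the sum being over all $\ell(\be)$ rows), and write $\rho_k$ for the number of rows of $T$ that contain the entry $k$. Counting the incidences $\{(i,k):\text{entry }k\text{ occurs in row }i\}$ in two ways gives $\sum_i d_i=\sum_{k=1}^{\ell(\al)}\rho_k$, since the entries of $T$ are exactly $1,\dots,\ell(\al)$, each actually occurring because $\al$ is a composition. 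Hence
\[p(T)-m(T)+\ell(\be)-\ell(\al)=\Big(\sum_i d_i-\ell(\be)\Big)-m(T)+\ell(\be)-\ell(\al)=\sum_{k=1}^{\ell(\al)}(\rho_k-1)-m(T),\]
so the lemma reduces to the statement $\sum_{k=1}^{\ell(\al)}(\rho_k-1)\geq m(T)$.

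To finish I would note that each $\rho_k\geq1$, so every summand is nonnegative, and it suffices to exhibit $m(T)$ distinct labels $k$ with $\rho_k\geq2$, one for each box counted by $m(T)$. Given a leftmost-column box $(j,1)$ counted by $m(T)$, its right neighbour $(j,2)$ and lower neighbour $(j+1,1)$ carry a common value $k=k(j,1)$; then $k$ occurs in row $j$ and in row $j+1$, whence $\rho_k\geq2$. The assignment $(j,1)\mapsto k(j,1)$ is injective: the first column of $T$ is strictly increasing (condition (c) in the definition of an immaculate tableau), so the value $k$ occurs at most once in the first column, and it is $(j+1,1)$ that carries it, so $k$ determines $j+1$, hence $(j,1)$. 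Thus $|\{k:\rho_k\geq2\}|\geq m(T)$, and summing gives $\sum_k(\rho_k-1)\geq m(T)$.

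I expect this route to be essentially routine; the only points needing care are the translation of $p(T),m(T),\ell(\be),\ell(\al)$ into the $\rho_k$'s and the injectivity of $(j,1)\mapsto k(j,1)$, which rests squarely on the strictly increasing first column, so I do not foresee a genuine obstacle. As a fallback, one can instead prove the inequality step by step along the iterated right-Pieri rule (\ref{pie'}) underlying (\ref{exp'}): for the maximal chain in $\fp\fc$ encoded by $T$, one checks at each step that the contributed exponent $\ell(\be'-\al)-\de_{\al_r,1}(1-\de_{\be_{r+1},0})$ plus the change in length is $\geq0$, the decisive case being when a new last row is created below a row of length $1$ — then, since the enlarged shape is a peak composition, that row must also be extended, so the exponent does not drop — and summing the per-step bounds over the chain yields the claim as well.
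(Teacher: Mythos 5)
Your proposal is correct and is essentially the paper's own argument: the paper also observes that $p(T)+\ell(\beta)$ is the total count of distinct entries per row, that each label $1,\dots,\ell(\alpha)$ contributes at least once, and that each box counted by $m(T)$ forces its common label to be counted in two rows, the needed distinctness coming from the strictly increasing first column. Your write-up merely makes the double count ($\sum_i d_i=\sum_k\rho_k$) and the injectivity step explicit; the Pieri-rule fallback is unnecessary.
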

\begin{proof} By definition, $p(T)+\ell(\be)$ is the sum of numbers of distinct integers appearing in each row of $T$. If there exists a box in the leftmost column whose right and bottom adjacent boxes are both labeled with some integer $i$, then such $i$ should be counted at least twice. Hence, we have $p(T)+\ell(\be)\geq m(T)+\ell(\al)$.
\end{proof}

\begin{exam}
For $\al=2^3$, we have
\[\begin{split}
\mb{PCT}(\cdot,\al)=&\left\{
\,\raisebox{1.5em}{\xy 0;/r.16pc/:
(0,0)*{};(10,0)*{}**\dir{-};
(0,-5)*{};(10,-5)*{}**\dir{-};
(0,-10)*{};(10,-10)*{}**\dir{-};
(0,-15)*{};(10,-15)*{}**\dir{-};
(0,0)*{};(0,-15)*{}**\dir{-};
(5,0)*{};(5,-15)*{}**\dir{-};
(10,0)*{};(10,-15)*{}**\dir{-};
(2.5,-2.5)*{\stt{1}};(7.5,-2.5)*{\stt{1}};
(2.5,-7.5)*{\stt{2}};(7.5,-7.5)*{\stt{2}};
(2.5,-12.5)*{\stt{3}};(7.5,-12.5)*{\stt{3}};
\endxy}\,,
\,\raisebox{1.5em}{\xy 0;/r.16pc/:
(0,0)*{};(10,0)*{}**\dir{-};
(0,-5)*{};(15,-5)*{}**\dir{-};
(0,-10)*{};(15,-10)*{}**\dir{-};
(0,-15)*{};(5,-15)*{}**\dir{-};
(0,0)*{};(0,-15)*{}**\dir{-};
(5,0)*{};(5,-15)*{}**\dir{-};
(10,0)*{};(10,-10)*{}**\dir{-};
(15,-5)*{};(15,-10)*{}**\dir{-};
(2.5,-2.5)*{\stt{1}};(7.5,-2.5)*{\stt{1}};
(2.5,-7.5)*{\stt{2}};(7.5,-7.5)*{\stt{2}};
(12.5,-7.5)*{\stt{3}};
(2.5,-12.5)*{\stt{3}};
\endxy}\,,
\,\raisebox{1.5em}{\xy 0;/r.16pc/:
(0,0)*{};(10,0)*{}**\dir{-};
(0,-5)*{};(20,-5)*{}**\dir{-};
(0,-10)*{};(20,-10)*{}**\dir{-};
(0,0)*{};(0,-10)*{}**\dir{-};
(5,0)*{};(5,-10)*{}**\dir{-};
(10,0)*{};(10,-10)*{}**\dir{-};
(15,-5)*{};(15,-10)*{}**\dir{-};
(20,-5)*{};(20,-10)*{}**\dir{-};
(2.5,-2.5)*{\stt{1}};(7.5,-2.5)*{\stt{1}};
(2.5,-7.5)*{\stt{2}};(7.5,-7.5)*{\stt{2}};
(12.5,-7.5)*{\stt{3}};(17.5,-7.5)*{\stt{3}};
\endxy}\,,
\,\raisebox{1.5em}{\xy 0;/r.16pc/:
(0,0)*{};(15,0)*{}**\dir{-};
(0,-5)*{};(15,-5)*{}**\dir{-};
(0,-10)*{};(10,-10)*{}**\dir{-};
(0,-15)*{};(5,-15)*{}**\dir{-};
(0,0)*{};(0,-15)*{}**\dir{-};
(5,0)*{};(5,-15)*{}**\dir{-};
(10,0)*{};(10,-10)*{}**\dir{-};
(15,0)*{};(15,-5)*{}**\dir{-};
(2.5,-2.5)*{\stt{1}};(7.5,-2.5)*{\stt{1}};
(12.5,-2.5)*{\stt{2}};(2.5,-7.5)*{\stt{2}};
(7.5,-7.5)*{\stt{3}};
(2.5,-12.5)*{\stt{3}};
\endxy}\,,
\,\raisebox{1.5em}{\xy 0;/r.16pc/:
(0,0)*{};(15,0)*{}**\dir{-};
(0,-5)*{};(15,-5)*{}**\dir{-};
(0,-10)*{};(10,-10)*{}**\dir{-};
(0,-15)*{};(5,-15)*{}**\dir{-};
(0,0)*{};(0,-15)*{}**\dir{-};
(5,0)*{};(5,-15)*{}**\dir{-};
(10,0)*{};(10,-10)*{}**\dir{-};
(15,0)*{};(15,-5)*{}**\dir{-};
(2.5,-2.5)*{\stt{1}};(7.5,-2.5)*{\stt{1}};
(12.5,-2.5)*{\stt{3}};(2.5,-7.5)*{\stt{2}};
(7.5,-7.5)*{\stt{2}};
(2.5,-12.5)*{\stt{3}};
\endxy}\,,
\,\raisebox{1.5em}{\xy 0;/r.16pc/:
(0,0)*{};(15,0)*{}**\dir{-};
(0,-5)*{};(15,-5)*{}**\dir{-};
(0,-10)*{};(15,-10)*{}**\dir{-};
(0,0)*{};(0,-10)*{}**\dir{-};
(5,0)*{};(5,-10)*{}**\dir{-};
(10,0)*{};(10,-10)*{}**\dir{-};
(15,0)*{};(15,-10)*{}**\dir{-};
(2.5,-2.5)*{\stt{1}};(7.5,-2.5)*{\stt{1}};
(12.5,-2.5)*{\stt{2}};(2.5,-7.5)*{\stt{2}};
(7.5,-7.5)*{\stt{3}};(12.5,-7.5)*{\stt{3}};
\endxy}\,,
\,\raisebox{1.5em}{\xy 0;/r.16pc/:
(0,0)*{};(15,0)*{}**\dir{-};
(0,-5)*{};(15,-5)*{}**\dir{-};
(0,-10)*{};(15,-10)*{}**\dir{-};
(0,0)*{};(0,-10)*{}**\dir{-};
(5,0)*{};(5,-10)*{}**\dir{-};
(10,0)*{};(10,-10)*{}**\dir{-};
(15,0)*{};(15,-10)*{}**\dir{-};
(2.5,-2.5)*{\stt{1}};(7.5,-2.5)*{\stt{1}};
(12.5,-2.5)*{\stt{3}};(2.5,-7.5)*{\stt{2}};
(7.5,-7.5)*{\stt{2}};(12.5,-7.5)*{\stt{3}};
\endxy}\,,\right.\\
&
\left.\,\raisebox{1.5em}{\xy 0;/r.16pc/:
(0,0)*{};(20,0)*{}**\dir{-};
(0,-5)*{};(20,-5)*{}**\dir{-};
(0,-10)*{};(10,-10)*{}**\dir{-};
(0,0)*{};(0,-10)*{}**\dir{-};
(5,0)*{};(5,-10)*{}**\dir{-};
(10,0)*{};(10,-10)*{}**\dir{-};
(15,0)*{};(15,-5)*{}**\dir{-};
(20,0)*{};(20,-5)*{}**\dir{-};
(2.5,-2.5)*{\stt{1}};(7.5,-2.5)*{\stt{1}};
(12.5,-2.5)*{\stt{2}};(17.5,-2.5)*{\stt{2}};
(2.5,-7.5)*{\stt{3}};(7.5,-7.5)*{\stt{3}};
\endxy}\,,
\,\raisebox{1.5em}{\xy 0;/r.16pc/:
(0,0)*{};(20,0)*{}**\dir{-};
(0,-5)*{};(20,-5)*{}**\dir{-};
(0,-10)*{};(10,-10)*{}**\dir{-};
(0,0)*{};(0,-10)*{}**\dir{-};
(5,0)*{};(5,-10)*{}**\dir{-};
(10,0)*{};(10,-10)*{}**\dir{-};
(15,0)*{};(15,-5)*{}**\dir{-};
(20,0)*{};(20,-5)*{}**\dir{-};
(2.5,-2.5)*{\stt{1}};(7.5,-2.5)*{\stt{1}};
(12.5,-2.5)*{\stt{2}};(17.5,-2.5)*{\stt{3}};
(2.5,-7.5)*{\stt{2}};(7.5,-7.5)*{\stt{3}};
\endxy}\,,
\,\raisebox{1.5em}{\xy 0;/r.16pc/:
(0,0)*{};(20,0)*{}**\dir{-};
(0,-5)*{};(20,-5)*{}**\dir{-};
(0,-10)*{};(10,-10)*{}**\dir{-};
(0,0)*{};(0,-10)*{}**\dir{-};
(5,0)*{};(5,-10)*{}**\dir{-};
(10,0)*{};(10,-10)*{}**\dir{-};
(15,0)*{};(15,-5)*{}**\dir{-};
(20,0)*{};(20,-5)*{}**\dir{-};
(2.5,-2.5)*{\stt{1}};(7.5,-2.5)*{\stt{1}};
(12.5,-2.5)*{\stt{3}};(17.5,-2.5)*{\stt{3}};
(2.5,-7.5)*{\stt{2}};(7.5,-7.5)*{\stt{2}};
\endxy}\,,
\,\raisebox{1.5em}{\xy 0;/r.16pc/:
(0,0)*{};(25,0)*{}**\dir{-};
(0,-5)*{};(25,-5)*{}**\dir{-};
(0,-10)*{};(5,-10)*{}**\dir{-};
(0,0)*{};(0,-10)*{}**\dir{-};
(5,0)*{};(5,-10)*{}**\dir{-};
(10,0)*{};(10,-5)*{}**\dir{-};
(15,0)*{};(15,-5)*{}**\dir{-};
(20,0)*{};(20,-5)*{}**\dir{-};
(25,0)*{};(25,-5)*{}**\dir{-};
(2.5,-2.5)*{\stt{1}};(7.5,-2.5)*{\stt{1}};
(12.5,-2.5)*{\stt{2}};(17.5,-2.5)*{\stt{2}};
(22.5,-2.5)*{\stt{3}};(2.5,-7.5)*{\stt{3}};
\endxy}\,,
\,\raisebox{1.5em}{\xy 0;/r.16pc/:
(0,0)*{};(25,0)*{}**\dir{-};
(0,-5)*{};(25,-5)*{}**\dir{-};
(0,-10)*{};(5,-10)*{}**\dir{-};
(0,0)*{};(0,-10)*{}**\dir{-};
(5,0)*{};(5,-10)*{}**\dir{-};
(10,0)*{};(10,-5)*{}**\dir{-};
(15,0)*{};(15,-5)*{}**\dir{-};
(20,0)*{};(20,-5)*{}**\dir{-};
(25,0)*{};(25,-5)*{}**\dir{-};
(2.5,-2.5)*{\stt{1}};(7.5,-2.5)*{\stt{1}};
(12.5,-2.5)*{\stt{2}};(17.5,-2.5)*{\stt{3}};
(22.5,-2.5)*{\stt{3}};(2.5,-7.5)*{\stt{2}};
\endxy}\,,
\,\raisebox{1.5em}{\xy 0;/r.16pc/:
(0,0)*{};(30,0)*{}**\dir{-};
(0,-5)*{};(30,-5)*{}**\dir{-};
(0,0)*{};(0,-5)*{}**\dir{-};
(5,0)*{};(5,-5)*{}**\dir{-};
(10,0)*{};(10,-5)*{}**\dir{-};
(15,0)*{};(15,-5)*{}**\dir{-};
(20,0)*{};(20,-5)*{}**\dir{-};
(25,0)*{};(25,-5)*{}**\dir{-};
(30,0)*{};(30,-5)*{}**\dir{-};
(2.5,-2.5)*{\stt{1}};(7.5,-2.5)*{\stt{1}};
(12.5,-2.5)*{\stt{2}};(17.5,-2.5)*{\stt{2}};
(22.5,-2.5)*{\stt{3}};(27.5,-2.5)*{\stt{3}};
\endxy}\,
\right\}.
\end{split}
\]
Then $p(T)=0,1,1,2,1,2,2,1,3,1,2,2,2$ successively and all $m(T)=0$ except the fourth one equal to 1. Hence,
\[Q_{2^3}=\sq_{2^3}+2(\sq_{231}+\sq_{24})+
4(\sq_{321}+\sq_6)+8(\sq_{3^2}+\sq_{51})+12\sq_{42}.\]
\end{exam}

\begin{cor}
For $n\in\bN$, $\{Q_\al\}_{\al\in\pp\mc_n}$ is also a linear basis of $\mP_n$.
\end{cor}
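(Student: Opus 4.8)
The plan is to read the corollary off from the triangularity of the expansion (\ref{exp'}) relative to the basis $\{\sq_\al\}_{\al\in\pp\mc_n}$ supplied by the preceding theorem. Fix $n\in\bN$ and totally order the finite set $\pp\mc_n$ by the lexicographic order $\leq_l$. For $\al\in\pp\mc_n$ every $\be$ occurring in (\ref{exp'}) automatically satisfies $|\be|=n$, so that formula reads
\[
Q_\al=\sum_{\be\in\pp\mc_n\atop \al\leq_l\be}\lb\sum_{T\in\bs{PCT}(\be,\al)}2^{p(T)-m(T)}\rb\sq_\be ,
\]
i.e. $Q_\al$ lies in the $\bk$-span of those $\sq_\be$ with $\be\geq_l\al$. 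Hence the transition matrix from $\{\sq_\al\}_{\al\in\pp\mc_n}$ to $\{Q_\al\}_{\al\in\pp\mc_n}$ is triangular with respect to $\leq_l$.

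It then remains to check that the diagonal entries are units, and here is the only place a small computation is needed. The coefficient of $\sq_\al$ in $Q_\al$ is $\sum_{T\in\bs{PCT}(\al,\al)}2^{p(T)-m(T)}$; by observation (1) following the definition of PCT, $\mb{p}_{\al,\al}=1$, the unique tableau $T$ having $\al_i$ copies of $i$ in its $i$th row. For this $T$ each row uses a single value, so $p_i(T)=0$ for all $i$ and $p(T)=0$; moreover, since $\al\in\pp\mc$ forces $\al_i\geq2$ for $i<\ell(\al)$, the box $(i,1)$ of the leftmost column has right entry $i$ and bottom entry $i+1$ (when $i<\ell(\al)$) and no bottom entry when $i=\ell(\al)$, so it is never counted and $m(T)=0$. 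Thus the diagonal coefficient equals $2^{0}=1$.

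Consequently the change of basis is unitriangular over $\bZ$, hence invertible (a fortiori over $\bk$), so $\{Q_\al\}_{\al\in\pp\mc_n}$ spans the same space as $\{\sq_\al\}_{\al\in\pp\mc_n}$, namely $\mP_n$; since $|\pp\mc_n|=f_{n-1}=\dim\mP_n$ it is a basis. I expect no genuine obstacle: everything beyond the one-line diagonal computation is the standard unitriangular–change–of–basis argument, using only (\ref{exp'}) and the theorem that the $\sq_\al$, $\al\in\pp\mc_n$, form a basis of $\mP_n$.
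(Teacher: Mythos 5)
Your proposal is correct and follows essentially the same route as the paper: it reads off upper unitriangularity (with respect to $\leq_l$) from formula (\ref{exp'}), using that the unique tableau in $\mb{PCT}(\al,\al)$ has $p(T)=m(T)=0$, and concludes that $\{Q_\al\}_{\al\in\pp\mc_n}$ is a basis because $\{\sq_\al\}_{\al\in\pp\mc_n}$ is. The only difference is that you spell out the diagonal computation and the dimension count $|\pp\mc_n|=f_{n-1}$ explicitly, which the paper leaves implicit.
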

\begin{proof}
Note that for any $\al\in\pp\mc_n$, the unique $T\in\mb{PCT}(\al,\al)$
satisfies $p(T)=m(T)=0$. In particular, by (\ref{exp'}) the transition matrix between
$\{Q_\al\}_{\al\in\pp\mc_n}$ and $\{\sq_\al\}_{\al\in\pp\mc_n}$
is upper unitriangular with respect to the lexicographic order, and thus the former is also a basis.
\end{proof}

\begin{rem}
Via the bijection between peak sets and peak compositions, we can take
$\pp\mc$ as the index set for bases of the peak algebra and let
\[\Pi_\al:=\Pi_{D(\al)},\,\al\in\pp\mc.\]
By \cite[Prop. 3.4.]{Sch}, we have
\beq\label{qp}Q_\al=2^{\ell(\al)}\sum_{\be\in\pp\mc\atop
D(\be)\subseteq D(\al)\cup(D(\al)+1)}\Pi_\be\eeq
for any composition $\al$. Obviously, for $\be\in\pp\mc$ such that $D(\be)\subseteq D(\al)\cup(D(\al)+1)$, we must have $\al\leq_l\be$. In particular, the transition matrix between $\{Q_\al\}_{\al\in\pp\mc_n}$ and $\{\Pi_\al\}_{\al\in\pp\mc_n}$
is also upper triangular.
\end{rem}

For small $n\in\bN$, we list the transition matrices between $\{Q_\al\}_{\al\in\pp\mc_n}$ and $\{\sq_\al\}_{\al\in\pp\mc_n}$ (resp. $\{\Pi_\al\}_{\al\in\pp\mc_n}$), denoted by $M_n(Q,\sq)$ (resp. $M_n(Q,\Pi)$).

\[\begin{array}{|c|c|c|c|}
 \hline
  \quad n \quad&\quad \mb{index}\quad&\quad M_n(Q,\sq)\quad & M_n(Q,\Pi)\\ \hline
  \quad 3 \quad&\quad 21,3 \quad&\quad \begin{matrix}
    1&2\\0&1
  \end{matrix}\quad&\quad \begin{matrix}
    4&4\\0&2
  \end{matrix}\quad\\ \hline
  \quad 4 \quad&\quad 2^2,31,4 \quad&\quad \begin{matrix}
    1&2&2\\0&1&2\\0&0&1
  \end{matrix} \quad&\quad \begin{matrix}
    4&4&4\\0&4&4\\0&0&2
  \end{matrix}\quad\\ \hline
  \quad 5 \quad&\quad \begin{split}
    &2^21,23,32,41,5
  \end{split} \quad&\quad \begin{matrix}
    1&2&6&6&4\\0&1&2&2&2\\0&0&1&2&2\\0&0&0&1&2\\0&0&0&0&1
  \end{matrix}\quad&\quad \begin{matrix}
    8&8&8&8&8\\0&4&4&0&4\\0&0&4&4&4\\0&0&0&4&4\\0&0&0&0&2
  \end{matrix}\quad\\ \hline
  \quad 6 \quad&\quad \begin{split}
    &2^3,231,24,321,3^2,\\
    &42,51,6
  \end{split} \quad&\quad \begin{matrix}
    1&2&2&4&8&12&8&4\\0&1&2&2&6&8&6&4\\0&0&1&0&2&2&2&2\\
    0&0&0&1&2&6&6&4\\0&0&0&0&1&2&2&2\\0&0&0&0&0&1&2&2\\
    0&0&0&0&0&0&1&2\\0&0&0&0&0&0&0&1
  \end{matrix} \quad&\quad \begin{matrix}
    8&8&8&8&8&8&8&8\\0&8&8&8&8&0&8&8\\0&0&4&0&4&0&0&4\\
    0&0&0&8&8&8&8&8\\0&0&0&0&4&4&0&4\\0&0&0&0&0&4&4&4\\
    0&0&0&0&0&0&4&4\\0&0&0&0&0&0&0&2
  \end{matrix}\quad\\ \hline
\end{array}\]

Next we give the following example to show that the recursive formula (\ref{rec}) is quite efficient for calculating $M_n(\sq,Q)$.
\begin{exam}
For $n=2$:
\[\sq_2=Q_2=2\Pi_2.\]

\noindent For $n=3$:
\[\begin{array}{l}
\sq_3=Q_3=2\Pi_3,\\
\sq_{21}=\sq_2Q_1-2\sq_3=Q_{21}-2Q_3=4\Pi_{21}.
\end{array}\]

\noindent For $n=4$:
\[\begin{array}{l}
\sq_4=Q_4=2\Pi_4,\,\sq_{31}=\sq_3Q_1-2\sq_4=Q_{31}-2Q_4=4\Pi_{31},\\
\sq_{2^2}=\sq_2Q_2-2\sq_3Q_1+2\sq_4=Q_{2^2}-2Q_{31}+2Q_4=4(\Pi_{2^2}-\Pi_{31}).
\end{array}\]

\noindent For $n=5$:
\[\begin{array}{l}
\sq_5=Q_5=4\Pi_5,\,\sq_{41}=\sq_4Q_1-2\sq_5=Q_{41}-2Q_5=4\Pi_{41},\\
\sq_{32}=\sq_3Q_2-2\sq_4Q_1+2\sq_5=Q_{32}-2Q_{41}+2Q_5=4(\Pi_{32}-\Pi_{41}),\\
\sq_{23}=\sq_2Q_3-2\sq_3Q_2+2\sq_4Q_1-2\sq_5=Q_{23}-2Q_{32}+2Q_{41}-2Q_5
=4(\Pi_{23}-\Pi_{32}),\\
\sq_{2^21}=\sq_{2^2}Q_1-2(\sq_{23}+\sq_{32})
=Q_{2^21}-2Q_{23}-2Q_{32}+2Q_{41}=8(\Pi_{2^21}-\Pi_{32}+\Pi_{41}).
\end{array}\]

\noindent For $n=6$:
\[\begin{array}{l}
\sq_6=Q_6=4\Pi_6,\,\sq_{51}=\sq_5Q_1-2\sq_6=Q_{51}-2Q_6=4\Pi_{51},\\
\sq_{42}=\sq_4Q_2-2\sq_5Q_1+2\sq_6=Q_{42}-2Q_{51}+2Q_6=4(\Pi_{42}-\Pi_{51}),\\
\begin{split}
\sq_{3^2}&=\sq_3Q_3-2\sq_4Q_2+2\sq_5Q_1-2\sq_6\\
&=Q_{3^2}-2Q_{42}+2Q_{51}-2Q_6
=4(\Pi_{3^2}-\Pi_{42}),
\end{split}\\
\begin{split}
\sq_{321}&=\sq_{32}Q_1-2(\sq_{42}+\sq_{3^2})
=Q_{321}-2Q_{42}-2Q_{3^2}+2Q_{51}\\
&=8(\Pi_{321}-\Pi_{42}+\Pi_{51}),
\end{split}\\
\begin{split}
\sq_{24}&=\sq_2Q_4-2\sq_3Q_3+2\sq_4Q_2-2\sq_5Q_1+2Q_6\\
&=Q_{24}-2Q_{3^2}+2Q_{42}-2Q_{51}+2Q_6
=4(\Pi_{24}-\Pi_{3^2}),
\end{split}\\
\begin{split}
\sq_{231}&=\sq_{32}Q_1-2(\sq_{42}+\sq_{3^2})
=Q_{231}-2Q_{321}+2Q_{3^2}-2Q_{24}+4Q_{42}-2Q_{51}\\
&=8(\Pi_{231}-\Pi_{321}-\Pi_{3^2}+\Pi_{42}).
\end{split}\\
\begin{split}
\sq_{2^3}&=\sq_{2^2}Q_2-2(\sq_{32}+\sq_{23})Q_1+2(\sq_{42}+\sq_{24})
+4\sq_{3^2}
=Q_{2^3}+2(Q_{24}-Q_{231})\\
&=8(\Pi_{2^3}-\Pi_{231}-\Pi_{321}+\Pi_{42}-\Pi_{51}).
\end{split}
\end{array}\]
\end{exam}

\subsection{Quasisymmetric Schur P-functions}

With respect to $[\cdot,\cdot]$, the dual of Schur's Q-functions are Schur's P-functions, denoted by $\mS'_\la$ (the usual notation $P_\la$ is given up to avoid confusion). Then $\mS_\la=2^{\ell(\la)}\mS'_\la$ for any strict partition $\la$. Let $\sq_\al^*$ be the dual of $\sq_\al$ in $\mB$ with respect to the pairing
$[\cdot,\cdot]$. We call them the \textit{quasisymmetric Schur P-functions}, since they are nice refinements of Schur's P-functions stated as follows.

\begin{theorem}
For any strict partition $\la$, we have
\beq\label{ref}
\mS'_\la=\sum_{\al\in\pp\mc\atop \tilde{\al}=\la}(-1)^{\ell(\si(\al))}\sq^*_\al,
\eeq
where $\tilde\al$ is the unique partition as the rearrangement of $\al$, and $\si(\al)$ is the permutation for this rearrangement.
\end{theorem}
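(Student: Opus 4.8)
The plan is to expand $\mS'_\la$ (with $n:=|\la|$) in the basis $\{\sq^*_\al\}_{\al\in\pp\mc_n}$ of $\mB_n$ and to compute each coefficient by transporting the pairing into $\Om$. Since $\{\sq_\al\}_{\al\in\pp\mc_n}$ is a basis of $\mP_n$, the family $\{\sq^*_\al\}_{\al\in\pp\mc_n}$ is, by construction, the dual basis of $\mB_n$; and $\mS'_\la$, being a homogeneous element of $\Om\subseteq\mB$ of degree $n$, lies in $\mB_n$. Therefore $\mS'_\la=\sum_{\al\in\pp\mc_n}[\sq_\al,\mS'_\la]\,\sq^*_\al$, and it suffices to evaluate the scalar $[\sq_\al,\mS'_\la]$ for each peak composition $\al$ of $n$.

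To evaluate it, I would apply the compatibility relation (\ref{bi}). Writing $\sq_\al=\Te(F)$ for some $F\in\mb{NSym}$ and $\mS'_\la=\te(g)$ for some $g\in\La$ (possible because $\te$ is surjective onto $\Om$), relation (\ref{bi}) identifies $[\sq_\al,\mS'_\la]$ with $[\pi(\sq_\al),\mS'_\la]$, where the right-hand pairing is the canonical inner product on $\Om$. By the lemma stating that the NSQF lift Schur's Q-functions, $\pi(\sq_\al)=\mS_\al$, the Schur Q-function indexed by the composition $\al$. Hence $[\sq_\al,\mS'_\la]=[\mS_\al,\mS'_\la]$ in $\Om$.

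Next I would use the anti-symmetric relations $\mS_\be=-\mS_{\be_{ij}}$ for Schur's Q-functions together with their duality with Schur's P-functions, $[\mS_\mu,\mS'_\la]=\de_{\mu,\la}$. If $\al$ has two equal parts, then $\mS_\al=0$ and the coefficient is $0$. If $\al$ has pairwise distinct parts, sorting $\al$ into the partition $\tilde{\al}$ by successive adjacent transpositions and applying the anti-symmetry at each step gives $\mS_\al=(-1)^{\ell(\si(\al))}\mS_{\tilde{\al}}$, where $\si(\al)$ is the sorting permutation and the sign is its parity. Combining, $[\sq_\al,\mS'_\la]$ equals $(-1)^{\ell(\si(\al))}$ when $\tilde{\al}=\la$ and $0$ otherwise. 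Since $\la$ is a \emph{strict} partition, $\tilde{\al}=\la$ already forces $\al$ to have distinct parts; and among the peak compositions, those rearranging to $\la$ are precisely the orderings of the parts of $\la$ in which a part equal to $1$, if present, occupies the last position. Substituting back into the expansion gives exactly (\ref{ref}).

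There is no analytic difficulty here; the argument is a chain of dualities. The points requiring care are: first, that $\mS'_\la$ really embeds into $\mB$ compatibly with $[\cdot,\cdot]$, which is exactly the content of (\ref{bi}) combined with the inclusion $\Om\hookrightarrow\mB$, and that $\te$ hits all of $\Om$ so that (\ref{bi}) is applicable to $\mS'_\la$; and second, that the sign obtained by iterating $\mS_\be=-\mS_{\be_{ij}}$ is well defined and equals $(-1)^{\ell(\si(\al))}$ regardless of the chosen sequence of transpositions, which holds because the signature is a homomorphism. I expect the bookkeeping of these interlocking dual pairings to be the main thing to keep straight, rather than any genuine obstacle.
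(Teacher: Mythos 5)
Your proposal is correct and follows essentially the same route as the paper: expand $\mS'_\la$ in the dual basis $\{\sq^*_\al\}_{\al\in\pp\mc_n}$, identify each coefficient as $[\sq_\al,\mS'_\la]$, transport it via (\ref{bi}) and $\pi(\sq_\al)=\mS_\al$ into the canonical pairing on $\Om$, and finish with the anti-symmetry of Schur's Q-functions and their duality with Schur's P-functions. Your extra care about the applicability of (\ref{bi}) and the well-definedness of the sign is just a fuller spelling-out of the same one-line computation in the paper.
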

\begin{proof} Let $\mS'_\la=\sum_{\be\in\pp\mc}c_{\la,\be}\sq^*_\be$. Then by (\ref{bi}), for any peak composition $\be$,
\[c_{\la,\be}=[\sq_\be,\mS'_\la]=[\pi(\sq_\be),\mS'_\la]
=[\mS_\be,\mS'_\la]=\de_{\tilde{\be},\la}
(-1)^{\ell(\si(\be))}.\]
\end{proof}
\begin{rem}
In \cite{Ste}, the author also gave the following refinement of Schur's Q-functions in terms of peak functions:
\beq\label{ref}\mS_\la=\sum_{T\in\bs{ShT}(\la)}K_{\La(T)},\eeq
where $\mb{ShT}(\la)$ is the set of standard shifted tableaux of shape $\la$, and $\La(T)$ is the peak set of  $T$.

Let $\bar{\sq}_\al=2^{-\ell(\al)}\sq_\al,\,
\bar{\sq}^*_\al=2^{\ell(\al)}\sq^*_\al,\,\al\in\pp\mc$. Then $\pi(\bar{\sq}_\al)=2^{-\ell(\al)}\mS_\al=\mS'_\al$ and
\beq\label{ref'}\mS_\la=\sum_{\al\in\pp\mc\atop \tilde{\al}=\la}(-1)^{\ell(\si(\al))}\bar{\sq}^*_\al.\eeq
Therefore we call $\bar{\sq}^*_\al$'s the \textit{quasisymmetric Schur Q-functions}.
\end{rem}

\begin{exam}
Let $K_\al=K_{D(\al)},\,\al\in\pp\mc$. Denote by $M_n(\Pi,\bar{\sq})$ and $M_n(\bar{\sq}^*,K)$ the corresponding transition matrices. Then $M_n(\Pi,\bar{\sq})=M_n(\bar{\sq},\Pi)^{-1}=M_n(\bar{\sq}^*,K)^T$, and we have

\[\begin{array}{|c|c|c|c|}
 \hline
  \quad n \quad&\quad \mb{index}\quad&\quad M_n(\bar{\sq},\Pi)\quad & M_n(\Pi,\bar{\sq})\\ \hline
  \quad 3 \quad&\quad 21,3 \quad&\quad \begin{matrix}
    1&0\\0&1
  \end{matrix}\quad&\quad \begin{matrix}
    1&0\\0&1
  \end{matrix}\quad\\ \hline
  \quad 4 \quad&\quad 2^2,31,4 \quad&\quad \begin{matrix}
    1&-1&0\\0&1&0\\0&0&1
  \end{matrix} \quad&\quad \begin{matrix}
    1&1&0\\0&1&0\\0&0&1
  \end{matrix}\quad\\ \hline
  \quad 5 \quad&\quad \begin{split}
    &2^21,23,32,41,5
  \end{split} \quad&\quad \begin{matrix}
    1&0&-1&1&0\\0&1&-1&0&0\\0&0&1&-1&0\\0&0&0&1&0\\0&0&0&0&1
  \end{matrix}\quad&\quad \begin{matrix}
    1&0&1&0&0\\0&1&1&1&0\\0&0&1&1&0\\0&0&0&1&0\\0&0&0&0&1
  \end{matrix}\quad\\ \hline
  \quad 6 \quad&\quad \begin{split}
    &2^3,231,24,321,3^2,\\
    &42,51,6
  \end{split} \quad&\quad \begin{matrix}
    1&-1&0&-1&0&1&-1&0\\0&1&0&-1&-1&1&0&0\\0&0&1&0&-1&0&0&0\\
    0&0&0&1&0&-1&1&0\\0&0&0&0&1&-1&0&0\\0&0&0&0&0&1&-1&0\\
    0&0&0&0&0&0&1&0\\0&0&0&0&0&0&0&1
  \end{matrix} \quad&\quad \begin{matrix}
    1&1&0&2&1&1&0&0\\0&1&0&1&1&1&0&0\\0&0&1&0&1&1&1&0\\
    0&0&0&1&0&1&0&0\\0&0&0&0&1&1&1&0\\0&0&0&0&0&1&1&0\\
    0&0&0&0&0&0&1&0\\0&0&0&0&0&0&0&1
  \end{matrix}\quad\\ \hline
\end{array}\]
\end{exam}

Set $\bar{Q}_\al=2^{-\ell(\al)}Q_\al$, then any $\bar{Q}_\al$ has a positive, integral and unitriangular expansion in $\bar{\sq}_\be$'s by formula (\ref{exp'}) and Lemma \ref{da}. Combining with (\ref{qp}), we know that $M(\bar{\sq},\Pi)$ is also integral and unitriangular.
From the above example, we expect the positivity of $M(\Pi,\bar{\sq})$ (or dually $M(\bar{\sq}^*,K)$), which is more interesting for us.
\begin{con}\label{conj}
$\{\Pi_\al\}_{\al\in\pp\mc}$ has a positive, integral and unitriangular expansion in  $\{\bar{\sq}_\al\}_{\al\in\pp\mc}$. Dually, $\{\bar{\sq}^*_\al\}_{\al\in\pp\mc}$ has such an expansion in the peak functions $\{K_\al\}_{\al\in\pp\mc}$.
\end{con}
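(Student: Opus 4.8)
Since $M_n(\Pi,\bar{\sq})=M_n(\bar{\sq}^*,K)^T$, the two assertions of Conjecture~\ref{conj} are equivalent, so it suffices to establish either; write $\Pi_\al=\sum_{\be\in\pp\mc,\,\al\leq_l\be}c_{\al\be}\,\bar{\sq}_\be$. By (\ref{exp'}), (\ref{qp}) and Lemma~\ref{da} the transition matrix $M_n(\bar{\sq},\Pi)$ is already known to be integral and upper unitriangular for $\leq_l$; the entire content of the conjecture is therefore that \emph{inverting} this (genuinely mixed-sign) unitriangular matrix yields only nonnegative entries, as the tables above illustrate. The plan is to make the $c_{\al\be}$ combinatorially explicit and then give a cancellation-free description of them.

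The first route is purely combinatorial. By (\ref{qp}), $\bar Q_\al=2^{-\ell(\al)}Q_\al=\sum_{\ga}\Pi_\ga$ is a $0$--$1$ expansion over the peak compositions $\ga$ with $D(\ga)\subseteq D(\al)\cup(D(\al)+1)$; inverting this $0$--$1$ upper-unitriangular matrix is a finite, explicit (albeit alternating) step, giving $\Pi_\al=\sum_\ga\mu(\ga,\al)\bar Q_\ga$. Composing with the nonnegative expansion $\bar Q_\ga=\sum_\be\big(\sum_{T\in\mb{PCT}(\be,\ga)}2^{p(T)-m(T)+\ell(\be)-\ell(\ga)}\big)\bar{\sq}_\be$ coming from (\ref{exp'}) exhibits $c_{\al\be}$ as a signed sum over pairs made of a peak composition $\ga$ (carrying the sign $\mu(\ga,\al)$) and a tableau $T\in\mb{PCT}(\be,\ga)$ (carrying the displayed $2$-power). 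The crux is then to build a sign-reversing involution that cancels all the negative pairs, leaving as fixed points exactly a family of objects — presumably certain chains in the peak composition poset $\fp\fc$ from $\al$ to $\be$, weighted by powers of $2$ — that computes $c_{\al\be}$. Producing this involution is the main obstacle: the signs forced by the peak-set condition and the statistics $p(T),m(T)$ must be matched at the same time, and the small matrices (entries $0,1,2$) already show that a naive pairing respects only one of them.

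A second, more conceptual route would also realize the representation-theoretic prediction recalled in the introduction. Following \cite{BHT}, which identifies $\mP$ and $\mB$ with Grothendieck rings of the tower of Hecke--Clifford algebras $HCl_n(0)$ — under which the peak functions $K_\be$ are the (suitably normalized) classes of the simple supermodules — one would, for each $\al\in\pp\mc_n$, construct a genuine $HCl_n(0)$-supermodule $\mathcal M_\al$ with a basis indexed by the standard peak composition tableaux of shape $\al$ (equivalently the maximal chains of $\fp\fc$ ending at $\al$), the $0$-Hecke generators acting by an ``attach/annihilate'' rule twisted by the Clifford signs and compatible with the super-grading, and then prove $[\mathcal M_\al]=\bar{\sq}^*_\al$. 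Nonnegativity and integrality of the $K$-expansion would then be immediate from $[\mathcal M_\al:S_\be]\in\bN_0$, unitriangularity would follow from a composition series refining $\leq_l$, and one would simultaneously obtain the hoped-for character realization of the QSQF, in the spirit of the treatment of the dual immaculate basis in \cite{BBSSZ1}. The hard part here is twofold: checking that the proposed operators satisfy all the defining relations of $HCl_n(0)$ at once (the quadratic $0$-Hecke relations, the Clifford relations and their interaction), and then computing the supercharacter of $\mathcal M_\al$ precisely enough to identify it with $\bar{\sq}^*_\al$ — this last computation is where the genuine difficulty of \cite{BBSSZ1}-type arguments lies, compounded here by the Clifford component.

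As a shortcut I would also try to factor $M_n(\bar{\sq},\Pi)$ into a product of ``elementary'' unitriangular matrices, each arising from adjoining a single part via the modified right-Pieri rule (\ref{pie'}) together with a product rule for the $\Pi_\al$ in $\mP$, each having a manifestly nonnegative inverse; such a factorization would prove the conjecture by a short induction on $n$. I am least optimistic about this, since $M_n(\bar{\sq},\Pi)$ genuinely has negative entries and need not be totally positive, so the required factorization, if it exists, is not the obvious one. Any of these arguments must remain consistent with the known facts that $\bar{\sq}^*_\al$ is monomial-positive and that, by Prop.~\ref{pos}, a positive $K$-expansion forces a positive expansion into fundamental quasisymmetric functions.
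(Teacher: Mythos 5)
The statement you were asked to prove is Conjecture~\ref{conj}: the paper itself offers no proof of it, only the numerical evidence of the transition matrices for $n\leq 6$ together with the already-established facts that $M_n(\bar{\sq},\Pi)$ is integral and unitriangular (via (\ref{exp'}), (\ref{qp}) and Lemma~\ref{da}) and that positivity would imply the $F$-positivity of Prop.~\ref{pos}. Your proposal likewise does not prove it. Everything you state as established — the equivalence of the two assertions via $M_n(\Pi,\bar{\sq})=M_n(\bar{\sq}^*,K)^T$, the integral unitriangularity of $M_n(\bar{\sq},\Pi)$, the $0$--$1$ expansion of $\bar{Q}_\al$ in the $\Pi_\ga$, and the nonnegative expansion of $\bar{Q}_\ga$ in the $\bar{\sq}_\be$ — is already in the paper and only reduces the conjecture to inverting a mixed-sign unitriangular matrix, which is where the difficulty begins, not ends. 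Each of your three routes then stops exactly at the step that carries the mathematical content: the sign-reversing involution matching the signs $\mu(\ga,\al)$ against the statistics $p(T),m(T)$ is not constructed (and you concede a naive pairing fails); the $HCl_n(0)$-supermodule $\mathcal M_\al$ is not constructed, the defining relations are not verified, and its character is not computed and identified with $\bar{\sq}^*_\al$; and the factorization of $M_n(\bar{\sq},\Pi)$ into unitriangular factors with nonnegative inverses is not exhibited, as you yourself acknowledge.

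So the gap is not a flaw in one step but the absence of any completed argument: what you have written is a research plan, consistent with (and partly restating) the paper's own remarks — indeed the representation-theoretic route echoes the paper's suggestion that a character realization in the spirit of \cite{BBSSZ1} should accompany the conjecture — but it cannot be assessed as a correct or incorrect proof of the statement, because no candidate for the positive combinatorial formula for the entries of $M_n(\Pi,\bar{\sq})$, nor any module whose composition multiplicities would realize them, is actually produced. If you pursue this, the first route is the most concrete: you would need to specify the fixed points of the involution (e.g.\ a class of weighted chains or tableaux) precisely enough that their count visibly reproduces the entries $0,1,2$ in the $n\leq 6$ tables, and prove the cancellation in general; short of that, the conjecture remains open.
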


Finally we discuss the expansion of quasisymmetric Schur P-functions in terms of the monomial $M_\al$'s or the fundamental $F_\al$'s.
 First note that
\[\begin{split}
\sq^*_\al&=\sum_{\be\in\mc}\lan H_\be,\sq^*_\al\ran M_\be=
\sum_{\be\in\mc}[Q_\be,\sq^*_\al] M_\be\\
&=\sum_{\be\in\mc}\lan R_\be,\sq^*_\al\ran F_\be=
\sum_{\be\in\mc}[\Te(R_\be),\sq^*_\al] F_\be.
\end{split}\]
By formula (\ref{exp'}), the coefficient
\[[Q_\be,\sq^*_\al]=
[\sum_{\ga\in\pp\mc
\atop\be\leq_l\ga}\lb\sum_{T\in\bs{PCT}(\ga,\be)}
2^{p(T)-m(T)}\rb \sq_\ga,\sq^*_\al]=\sum_{T\in\bs{PCT}(\al,\be)}
2^{p(T)-m(T)},\]
which vanishes unless $\be\leq_l\al$. Furthermore, since $\Te(R_\be)=\sum_{\ga\geq\be}(-1)^{\ell(\be)-\ell(\ga)}Q_\ga$ and $\ga\geq\be\Rightarrow\ga\geq_l\be$,  $[\Te(R_\be),\sq^*_\al]=\sum_{\ga\geq\be}
(-1)^{\ell(\be)-\ell(\ga)}[Q_\ga,\sq^*_\al]=0$ unless $\be\leq_l\al$.

If Conjecture \ref{conj} holds, then by the formula
\[\Te(R_\al)=\sum_{\be\in\pp\mc_n\atop D(\be)\subseteq D(\al)\triangle(D(\al)+1)}2^{\ell(\be)}\Pi_\be,\,\al\in\mc_n\]
in \cite[(6)]{BHT},
the coefficients $[\Te(R_\be),\sq^*_\al],\,\be\in\mc$ are also nonnegative integers.
In summary, we have
\begin{prop}\label{pos}
For any $\al\in\pp\mc$,
\[\sq^*_\al=\sum_{\be\leq_l\al}\lb\sum_{T\in\bs{PCT}(\al,\be)}
2^{p(T)-m(T)}\rb M_\be.\]
If Conjecture \ref{conj} holds, then  $\sq^*_\al$ also
has a positive, integral expansion in $F_\be$'s with $\be\leq_l\al$ and the leading coefficient equals to 1.
\end{prop}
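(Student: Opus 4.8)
The plan is to assemble the ingredients collected in the discussion preceding the statement, organized into two stages, and then to isolate the single point that genuinely invokes Conjecture~\ref{conj}.

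\emph{The monomial expansion.} First I would expand $\sq^*_\al$ in the monomial basis of $\mb{QSym}$ as $\sq^*_\al=\sum_{\be\in\mc}\lan H_\be,\sq^*_\al\ran M_\be$; by the compatibility of the two pairings (the identity $\lan F,\vt(f)\ran=[\Te(F),\vt(f)]$ applied to a preimage $f$ of $\sq^*_\al$ under the surjection $\vt$) this coefficient equals $\lan H_\be,\sq^*_\al\ran=[Q_\be,\sq^*_\al]$. I would then substitute the expansion (\ref{exp'}) of $Q_\be$ in the natural basis $\{\sq_\ga\}_{\ga\in\pp\mc}$ and use that $\sq^*_\al$ is dual to $\sq_\al$, so that only the term $\ga=\al$ survives, giving $[Q_\be,\sq^*_\al]=\sum_{T\in\mb{PCT}(\al,\be)}2^{p(T)-m(T)}$. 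Since $\mb{PCT}(\al,\be)=\emp$ unless $\be\leq_l\al$, the sum is supported on $\be\leq_l\al$, and because the unique tableau in $\mb{PCT}(\al,\al)$ has $p(T)=m(T)=0$, the $M_\al$-coefficient is $1$; this gives the first displayed formula and shows, in addition, that the monomial expansion of $\sq^*_\al$ is unitriangular with respect to $\leq_l$.

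\emph{The fundamental expansion.} Integrality I would obtain \emph{without} the conjecture: the change of basis between $\{M_\be\}$ and $\{F_\be\}$ is, in both directions, triangular with integer entries and $1$'s on the diagonal with respect to the refinement order, and since a coarsening of a composition is lexicographically no smaller, this order is contained in $\leq_l$; hence the integer, $\leq_l$-unitriangular monomial expansion from the previous paragraph converts into $\sq^*_\al=F_\al+\sum_{\be<_l\al}c_\be F_\be$ with all $c_\be\in\bZ$, in particular with leading coefficient $1$ and support inside $\{\be\leq_l\al\}$. It then remains to see that each $c_\be=[\Te(R_\be),\sq^*_\al]\ge 0$, and this is the sole place Conjecture~\ref{conj} is needed. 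By \cite[(6)]{BHT} the element $\Te(R_\be)=\sum_{D(\de)\subseteq D(\be)\triangle(D(\be)+1)}2^{\ell(\de)}\Pi_\de$ is a \emph{nonnegative} combination of the $\Pi_\de$'s, so it suffices to check that $[\Pi_\de,\sq^*_\al]\ge 0$ for every $\de\in\pp\mc$; under Conjecture~\ref{conj}, $\Pi_\de$ has a nonnegative-integer expansion in $\{\bar\sq_\ga\}_{\ga\in\pp\mc}$, and since $[\bar\sq_\ga,\sq^*_\al]=2^{-\ell(\ga)}\,[\sq_\ga,\sq^*_\al]=2^{-\ell(\al)}\de_{\ga,\al}$, the number $[\Pi_\de,\sq^*_\al]$ is $2^{-\ell(\al)}$ times a nonnegative integer, hence $\ge 0$. (Equivalently, the dual half of the conjecture says $\bar\sq^*_\al$ expands positively in the peak functions $\{K_\de\}$, while $\Te(R_\be)$ pairs with each $K_\de$ to a nonnegative power of $2$ or to $0$ by the same formula.) Combining this with integrality, $c_\be\in\bN_0$ and $c_\be=0$ for $\be\not\leq_l\al$, which is the second assertion.

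\emph{Expected difficulty.} There is no deep obstacle: the substantive content has already been produced in the discussion before the statement, so the remaining task is organizational. The two things to watch are keeping the two pairings ($\lan\cdot,\cdot\ran$ on $\mb{NSym}\times\mb{QSym}$ versus $[\cdot,\cdot]$ on $\mP\times\mB$) straight and invoking their compatibility at the right places, and making sure that integrality of the $F$-coefficients is extracted from the monomial expansion together with the unitriangular change of basis between $\{M_\be\}$ and $\{F_\be\}$ rather than from Conjecture~\ref{conj}, which by itself controls only the $\Pi$-to-$\bar\sq$ transition. The one mildly delicate point is the bookkeeping of the powers of $2$ in the normalizations $\sq$ versus $\bar\sq$ and $\sq^*$ versus $\bar\sq^*$ when translating the conjecture into a positivity statement for $[\Pi_\de,\sq^*_\al]$.
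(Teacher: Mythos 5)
Your proposal is correct and follows essentially the same route as the paper: the monomial coefficients are identified as $[Q_\be,\sq^*_\al]=\sum_{T\in\mb{PCT}(\al,\be)}2^{p(T)-m(T)}$ via formula (\ref{exp'}) and duality, and positivity of the $F$-coefficients under Conjecture \ref{conj} comes from the expansion of $\Te(R_\be)$ in the $\Pi_\de$'s from \cite[(6)]{BHT} together with $[\Pi_\de,\sq^*_\al]\ge 0$ (a detail the paper leaves implicit and you spell out correctly). The only minor difference is that you obtain integrality and $\leq_l$-triangularity of the $F$-expansion from the unitriangular $M$--$F$ change of basis, whereas the paper reads them off from $\Te(R_\be)=\sum_{\ga\geq\be}(-1)^{\ell(\be)-\ell(\ga)}Q_\ga$; both are valid.
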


For example,
\[\begin{split}
\sq^*_{321}&=M_{321}+M_{312}+2(M_{31^3}+M_{231})+4M_{2^3}
+8M_{2^21^2}+2M_{213}+8(M_{2121}+M_{21^22})\\
&+16M_{21^4}+M_{141}+3M_{132}+6M_{131^2}+2M_{123}+8(M_{12^21}
+M_{1212})+16M_{121^3}\\
&+4M_{1^231}+8M_{1^22^2}+16M_{1^221^2}+4M_{1^33}
+16(M_{1^321}+M_{1^42})+32M_{1^6}\\
&=F_{321}+F_{312}+2F_{231}+4F_{2^3}+2(F_{2^21^2}+F_{213})
+3F_{2121}+F_{21^22}+F_{141}\\
&+3F_{132}+2(F_{131^2}+F_{123})+4F_{12^21}+2F_{1212}+F_{1^231}
+F_{1^22^2}
\end{split}\]

\bigskip

\centerline{\bf Acknowledgments}
We would like to thank Weiqiang Wang for stimulating discussions.
NJ thanks the partial support of Simons Foundation grant 198129
and NSFC grant 11271138 during this work.

\bigskip
\bibliographystyle{amsalpha}

\end{document}